\newtheorem{theorem}{Theorem}[section]
\newtheorem{lemma}[theorem]{Lemma}
\theoremstyle{definition}
\theoremstyle{plain}
\theoremstyle{plain}
\numberwithin{equation}{section}
\begin{document}
	\pagestyle{myheadings}
	\begin{titlepage}
		\title{\bf  Hamiltonian paths extending a set of matchings in hypercubes}
		\author{Abid Ali$^1$, Lina Ba$^2$ {\small and} Weihua Yang$^{1,*}$  \\
			{\it \small $^1$College of Mathematics, Taiyuan University of Technology, Taiyuan, Shanxi, PR China, 030024}\\
			{\it\small $^2$School of Finance and Mathematics Huainan Normal University,  Huainan, Anhui, PR China, 232038}\\
			{\it \small Emails:\ }
			{\it \small lian0003@link.tyut.edu.cn, baln19@lzu.edu.cn, yangweihua@tyut.edu.cn}\\
%		{\it \small yangweihua@tyut.edu.cn}
		} 
	\end{titlepage}
	% Mathematics Subject Classifications:\ 34A08;\ 34B37;\ 35R11.
	\date{}
	\maketitle \vskip 0.1cm \centerline
	%------------------------------------------------------------------------------------------
	\hrulefill
\begin{abstract}	
The hypercube \( Q_n \) contains a Hamiltonian path joining \( x \) and \( y \) (where $x$ and $y$ from the opposite partite set) containing \( P \) if and only if the induced subgraph of \( P \) is a linear forest, where none of these paths have \( x \) or \( y \) as internal vertices nor both as endpoints. Dvořák and Gregor answered a problem posed by Caha and Koubek and proved that for every \( n \geq 5 \), there exist vertices \( x \) and \( y \) with a set of \( 2n - 4 \) edges in \( Q_n \) that extend to the Hamiltonian path joining \( x \) and \( y \). This paper examines the Hamiltonian properties of hypercubes with a matching set. Let consider the hypercube \( Q_n \), for \( n \geq 5 \) and a set of matching \( M \) such that \( |M| \leq 3n - 13 \). We prove a Hamiltonian path exists joining two vertices $x$ and $y$ in \( Q_n \) from opposite partite sets containing $M$. 
\end{abstract}
	\noindent \emph{\bf Keywords}:~{Matching, Hamiltonian path,  Hypercube, Hamiltonian laceable}.
	
	\maketitle
	% -----------------------------------------------------------------------------------
	\hrulefill
	
\section{Introduction}
The $n$-dimensional hypercube $Q_n$ is known to be Hamiltonian for all $n \geq 2$. This publication of the fact dates back to 1872 \cite{1}, and since the study of properties of the Hamiltonian cycle has received considerable attention \cite{ DT05, Dt09, SC97, Kre96, Fin07,  Fink09, Gre09, 11,  TJ18}. A classical result by Havel \cite{H84} shows that for every two vertices $x$ and $y$ in $Q_n$ with $n \geq 1$, if  $x$ and $y$ from the distinct partite set i.e., $d(x,y)$ is odd, then a Hamiltonian path exists between them. Note that the above condition is trivially necessary, as hypercubes are bipartite graphs with an even number of vertices.

In a graph \( G \), a path (respectively, cycle)  is classified as a Hamiltonian path (respectively, cycle) if every vertex of \( G \) appears in path (respectively, cycle) exactly one time. Particularly, a Hamiltonian path visits every vertex only once without forming a cycle, whereas a Hamiltonian cycle returns to its starting vertex. A graph is Hamiltonian if it has a Hamiltonian cycle. Furthermore, if a Hamiltonian path exists between every two distinct vertices in the graph, the graph is considered Hamiltonian-connected. A bipartite graph, \( G = (V_0 \cup V_1, E) \) is one in which the vertex set \( V(G) \) can be divided into two disjoint subsets, \( V_0 \) and \( V_1 \), such that each edge in \( G \) connects a vertex from \( V_0 \) to a vertex in \( V_1 \). A fundamental question in studying the Hamiltonian properties of a graph is determining whether it is Hamiltonian-connected or Hamiltonian. Although in Hamiltonian bipartite graphs  \( |V_0| = |V_1| \), as a Hamiltonian cycle must alternate between vertices in \( V_0 \) and \( V_1 \). Consequently, not all Hamiltonian bipartite graphs are Hamiltonian-connected. For those Hamiltonian bipartite graphs, the concept of Hamiltonian laceability was presented by Simmons \cite{new2}. A bipartite graph \( G = (V_0 \cup V_1, E) \), if connecting any two vertices \( x \) and \( y \) such that \( y \in V_1 \) and \( x \in V_0 \), is Hamiltonian laceable.
	
A problem introduced by Caha and Koubek:  for every two vertices $x$ and $y$ of distinct partite sets, does there exist a Hamiltonian path joining $x$ and $y$ that passes through a given subset of edges in $Q_n$? They observed in \cite{DT05}, that any such subset $P$ of edges from a Hamiltonian path joining $x$ and $y$ must induce a subgraph $P$ is a linear forest, none of these paths connect $x$ and $y$, neither $x$ nor $y$ can be incident with more than one edge of $P$. Moreover, when $|P| \leq n-2$ for $n \geq 2$, assuming $x$ and $y$ are two vertices of opposite partite sets, the above conditions say "\textit{natural necessary condition}" are not only necessary but also sufficient for ensuring the existence of a Hamiltonian path containing all edges of $P$. However, they also provided a counterexample for $ n\geq 3$, showing a set of $ 2n-3$ edges satisfying the above condition that cannot extend to any Hamiltonian path between $x$ and $y$, are from distinct partite sets. Indeed, let $u \neq y$ be a neighbor of $x$, and construct a set $P$ of edges connected to neighbors of $u$ such that $x$ is incident with one edge of $P$, all but one of the remaining neighbors are incident with two edges from $P$, and $u$ is not incident with any edge in $P$, satisfies the above conditions. Since each vertex in $Q_n$ has degree $n$, this implies $|P| = 2n - 3$. Any Hamiltonian path from $x$ to $y$ that contains all edges of $P$ must avoid vertex $u$.

In a graph \( G \), matching $M$ is a set of edges where no two edges share a common vertex. When a matching covers each vertex of \( G \) incident with exactly one edge in $M$ is a perfect matching. Ruskey and Savage \cite{RS93} ask the question: In the \( n \)-dimensional hypercube \( Q_n \) for \( n \geq 2 \), does any matching extend to a Hamiltonian cycle? Dvořák \cite{DT05} proved that for $n\geq 2$, the induced subgraph of a linear forest containing at most $2n-3$ edges in the $n$-dimensional hypercube can extend into a Hamiltonian cycle. Moreover, Caha and Koubek \cite{11} showed the bound is sharp. Kreweras \cite{Kre96} conjectured that any perfect matching of \( Q_n \) for \( n \geq 2 \) can be extended to a Hamiltonian cycle. Fink \cite{Fin07, Fink09}, resolve this conjecture and show that Ruskey and Savage's problem holds for \( n \in \{2, 3, 4 \}\). Later, Wang and Zhao \cite{WC18} proved for hypercube \( Q_5 \).

Gregor \cite{Gre09} strengthened Fink's result by splitting the hypercube $Q_n$ into subcubes of non-zero dimensions within the hypercube; a perfect matching can be extended to a Hamiltonian cycle on these subcubes if and only if the perfect matching connects them. Similarly, Wang and Zhang \cite{WZ16} proved that Ruskey-Savage problem holds for small matchings and obtained the result below.
\begin{theorem}[\cite{WZ16}] \label{wz16}
Consider $M$ is a set of matchings in \( Q_n \) for \( n \geq 4 \), such that \( |M| \leq 3n - 10 \). Then, $M$ can be extended to a Hamiltonian cycle.
\end{theorem}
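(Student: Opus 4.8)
The plan is to argue by induction on $n$, exploiting the canonical decomposition of $Q_n$ along a coordinate direction $d$ into two subcubes $Q_{n-1}^{0}$ and $Q_{n-1}^{1}$ (each a copy of $Q_{n-1}$) together with the perfect matching of $d$-edges joining them. For a matching $M$ I write $M_0 = M \cap Q_{n-1}^{0}$, $M_1 = M \cap Q_{n-1}^{1}$, and $M_d$ for the edges of $M$ lying in direction $d$; after the split the edges of $M_d$ become \emph{crossing} edges and must be threaded into the final cycle. For the base of the induction note that a matching is in particular a linear forest, so whenever $|M| \le 2n-3$ the conclusion is immediate from Dvořák's theorem quoted above. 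Since $3n-10 \le 2n-3$ holds exactly when $n \le 7$, this settles $n \in \{4,5,6,7\}$ outright and reduces the problem to $n \ge 8$ with $2n-2 \le |M| \le 3n-10$.

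First I would locate a direction $d$ with $\max(|M_0|,|M_1|) \le 3(n-1)-10 = 3n-13$, so that the induction hypothesis applies verbatim inside each subcube. Such a direction always exists in the relevant range: if $|M| \le 3n-13$ every direction works trivially, while for $|M| \in \{3n-12,\,3n-11,\,3n-10\}$ a counting argument over the $n$ directions does the job. Indeed, if every direction were "unbalanced'' (one side carrying at least $3n-12$ edges) then, choosing $c$ to be the heavy side in each coordinate, the total number of (edge, direction) incidences in which an edge fails to sit on the heavy side is at most $n\bigl(|M|-(3n-12)\bigr)$; comparing this with the contribution $1+\mathrm{dist}'(u,c)$ of each edge $uv$ forces all but one edge of $M$ to be incident to the single vertex $c$, which is impossible for a matching once $n \ge 6$. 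Among the balanced directions one then tries to keep $|M_d|$ small — the pigeonhole bound already supplies a direction with $|M_d|\le 2$ — so that only a bounded number of crossing edges survive.

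Applying the induction hypothesis yields Hamiltonian cycles $C_0 \supseteq M_0$ and $C_1 \supseteq M_1$. If $|M_d| = 0$ the two cycles are spliced along any common rung $a_0 b_0 \in C_0$, $a_1 b_1 \in C_1$ with $a_0 a_1$ and $b_0 b_1$ being $d$-edges and with the rung avoiding $M$: deleting $a_0b_0,\,a_1b_1$ and inserting $a_0a_1,\,b_0b_1$ produces the desired Hamiltonian cycle of $Q_n$. The \textbf{main obstacle}, where I expect the real work to lie, is forcing the surviving crossing matching edges of $M_d$ into the cycle. Each such edge $x_0 x_1$ must join its feet vertically, so one cannot merely ask for closed cycles in the subcubes; instead I would strengthen the inductive statement to produce in each $Q_{n-1}$ a spanning union of vertex-disjoint paths through the prescribed matching whose endpoints are exactly the feet of the crossing edges, and then use the edges of $M_d$ to stitch these path systems across the two subcubes into one Hamiltonian cycle. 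Establishing this laceability-type statement — a Hamiltonian path system of $Q_{n-1}$ that \emph{simultaneously} contains a prescribed matching and has prescribed endpoints — is the crux, and it is precisely here that Havel's Hamiltonian-laceability theorem and Fink's perfect-matching theorem would be invoked to build and reconnect the local pieces.
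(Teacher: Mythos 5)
First, a point of comparison: the paper does not prove this statement at all --- Theorem \ref{wz16} is imported verbatim from Wang and Zhang \cite{WZ16}, so your attempt can only be measured against that external proof, which is a self-contained induction built on a battery of spanning-path lemmas (the ones restated in this paper as Lemmas \ref{a4}, \ref{a1}, \ref{a5} and \ref{wz}). Your opening moves are sound: the base cases $n \le 7$ do follow from Dvo\v{r}\'ak's cycle theorem, since a matching is a linear forest and $3n-10 \le 2n-3$ in that range, and your coordinate-counting argument for a balanced direction can be made rigorous (if every direction were bad, summing the failures over all $n$ directions forces $\sum_{uv \in M} \mathrm{dist}'(u,c) \le 10-n \le 2$ for $n \ge 8$, hence all but at most two edges of $M$ would be incident with the single vertex $c$, impossible for a matching of size at least $2n-2$).

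The genuine gap is the splicing step, and it appears already in your ``easy'' case $|M_d|=0$. Applying the inductive hypothesis independently in the two subcubes yields Hamiltonian cycles $C_0 \supseteq M_0$ and $C_1 \supseteq M_1$, but nothing guarantees a common rung: for $n-1$ even, $Q_{n-1}$ decomposes into edge-disjoint Hamiltonian cycles, so $C_1$ may share no edge position with $C_0$, and the operation ``delete $a_0b_0$ and $a_1b_1$, insert $a_0a_1$ and $b_0b_1$'' has nothing to act on. The standard repair --- build only $C_0$ by induction, pick $a_0b_0 \in E(C_0)\setminus M_0$ with $a_1b_1 \notin M_1$, and finish with a Hamiltonian path of $Q^1_{n-1}$ joining $a_1$ and $b_1$ that contains $M_1$ --- replaces the cycle statement by a path statement, which is exactly the ``laceability-type statement'' you flag as the crux but never establish. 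Havel's theorem and Fink's theorem do not supply it: neither allows a prescribed matching inside a Hamiltonian path with prescribed endpoints, and that combination is precisely the hard content. Indeed it is the subject of the present paper's Theorem \ref{main}, whose proof in turn invokes Theorem \ref{wz16} (Subcase 2.3), so appealing to it here would be circular; moreover its bound $3(n-1)-13$ would anyway be too weak when $|M_1|$ is close to $3n-13$. Wang and Zhang's actual proof spends essentially all of its effort on this step, via spanning $2$-path lemmas of the type of Lemma \ref{wz} and a case analysis on how the crossing edges and a heavy side interact, much as in Cases 2--4 of Section \ref{sec3} here. As it stands, your proposal is an outline whose decisive component is assumed rather than proved.
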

Dvořák and Gregor \cite{DM07} proved that for any two vertices \( x \) and \( y \) in \( Q_n \), where the vertices  $x$ and $y$ are from the opposite partite set, and given a set of up to \( 2n-4 \) prescribed edges that satisfy the natural necessary condition, it is possible to extend to a Hamiltonian path between \( x \) and \( y \) that contains the prescribrd edges set. This result is valid for all \( n \geq 2 \), except in two forbidden configurations that occur when \( n = \{3, 4 \}\).
\begin{theorem}[\cite{DM07}] \label{n1}
Let $P\subseteq E(Q_n)$ and $x, y \in V(Q_n)$ for $n\geq 5$,  be two vertices of distinct partite sets that satisfy natural necessary conditions such that $|P|\leq 2n-4$. Then, a Hamiltonian path exists joining $x$ and $y$ passing through $P$ if and only if the induced subgraph of $P$ is a linear forest.
\end{theorem}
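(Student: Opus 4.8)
The plan is to prove the nontrivial ``if'' direction by induction on $n$, the ``only if'' direction being immediate: every subgraph of a path is a disjoint union of paths, so if $P$ lies on a Hamiltonian path it must induce a linear forest, and the remaining natural necessary conditions (no prescribed path joins $x$ to $y$, and each of $x,y$ is incident to at most one edge of $P$) are forced for the same reason. For the base case I would dispatch $n=5$ directly. When $|P|\le n-2=3$ the claim is already the cited result that the natural necessary condition is sufficient, so only the configurations with $4\le |P|\le 6$ need to be checked, and these reduce to a bounded number of cases up to the automorphism group of $Q_5$; the two forbidden low-dimensional configurations are precisely what forces the threshold $n\ge 5$.

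For the inductive step, write $Q_n=Q_{n-1}^0\cup Q_{n-1}^1$ by splitting along a coordinate $j$, with $M_j$ the perfect matching of direction-$j$ edges joining the two halves, and split $P$ into $P_0=P\cap E(Q^0)$, $P_1=P\cap E(Q^1)$, and the crossing set $P_j=P\cap M_j$. The first key step is choosing $j$. Since $|P|\le 2n-4$ is distributed over $n$ directions, averaging shows some direction carries at most one prescribed edge; I would choose $j$ among such directions, and further so that $x$ and $y$ are placed favorably, guaranteeing $|P_0|,|P_1|\le 2(n-1)-4$ so that the inductive hypothesis applies on each half. The crossing edges $P_j$ are the delicate part, since each must be realized as a bridge between the halves and a single inter-cube path can absorb only a controlled number of crossings.

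The case analysis then splits on the location of $x$ and $y$. If $x\in Q^0$ and $y\in Q^1$, I would pick a matching edge $aa'\in M_j$ (using the forced crossing edge of $P_j$ when present), apply induction to obtain a Hamiltonian path of $Q^0$ from $x$ to $a$ through $P_0$ and one of $Q^1$ from $a'$ to $y$ through $P_1$, and concatenate them along $aa'$; here $a$ must be chosen so that the prescribed edges at the interface respect the natural necessary conditions on each side. If $x$ and $y$ lie in the same half, say $Q^0$, a single bridge no longer suffices: I would instead produce in $Q^0$ a spanning union of two disjoint paths, one from $x$ and one ending at $y$, with prescribed free ends $u,v$, cross into $Q^1$ via $uu',vv'\in M_j$, and cover $Q^1$ by a Hamiltonian path from $u'$ to $v'$, again routing $P_0,P_1$ correctly.

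I expect the main obstacle to be this same-subcube case, because it requires strengthening the inductive statement to one about disjoint path covers, namely two prescribed-endpoint paths spanning a subcube, rather than a single Hamiltonian path, and one must verify that both the edge budget and the natural necessary conditions transfer cleanly to that stronger statement. A secondary difficulty is the bookkeeping of the prescribed crossing edges in $P_j$ together with the prescribed edges incident to $x$ and $y$: after each split one must ensure that no endpoint becomes an interior vertex of a prescribed subpath and that neither half inadvertently violates the degree restriction, which is where the careful choice of splitting direction does most of the work.
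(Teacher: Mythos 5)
First, a point of reference: the paper does not prove this statement at all --- Theorem \ref{n1} is imported verbatim from Dv\v{o}\v{r}\'ak and Gregor \cite{DM07}, so your proposal can only be measured against that external proof and against internal consistency. Measured that way, there is a genuine gap at the heart of your inductive step. You claim that the splitting direction $j$ can be chosen so that $|P_0|,|P_1|\leq 2(n-1)-4$, which is what the induction hypothesis requires. This is false. Averaging only controls the crossing set: since $|P|\leq 2n-4<2n$, some direction carries at most one crossing edge, but nothing prevents the non-crossing edges from concentrating in one half. For instance, take $P$ to be a linear forest of exactly $2n-4$ edges contained entirely in a single $(n-1)$-dimensional subcube; then for that splitting direction $|P_0|=2n-4$, which exceeds the budget $2(n-1)-4$ by two, and splitting along any other direction does not help, since the heavy side still retains up to $2n-5$ edges once at most one crossing edge is peeled off. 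So the induction as you set it up simply does not close. Any correct proof must handle these overflow cases by a separate mechanism: remove one or more prescribed edges from the heavy side, apply induction to the lightened set, and then re-absorb the removed edges by detours through the opposite subcube via their endpoints' neighbors. This is precisely the machinery the present paper deploys for its own main theorem (Cases 2--4 of the proof of Theorem \ref{main}, where edges $e,f,g\in M_0$ are removed, a Hamiltonian path through $M_0\setminus\{e,f,g\}$ is built, and the excised edges are recovered through spanning $k$-paths in $Q^1_{n-1}$ supplied by Theorem \ref{a8}), and the analogous overflow analysis is a large fraction of the Dv\v{o}\v{r}\'ak--Gregor argument.

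Two secondary gaps compound this. You correctly observe that the same-subcube case forces a strengthening of the inductive statement to spanning $2$-path covers with prescribed edges, but you leave that strengthened statement unproved; in \cite{DM07} the corresponding path-partition lemmas with prescribed edges are the bulk of the technical work, and their edge budgets do not transfer ``cleanly'' --- a spanning $2$-path consumes two endpoint pairs, which effectively costs prescribed-edge capacity (compare the hypothesis $2k-|\{a_ib_i\}\cap E(Q_n)|<n$ in Theorem \ref{a8}). Finally, dismissing the base case $n=5$ with $4\leq |P|\leq 6$ as ``a bounded number of cases up to automorphism'' is not a proof: the number of placements of a $6$-edge linear forest together with an antipodal-parity pair $(x,y)$ in $Q_5$ is finite but very large, and the published proof does not dispatch it by brute enumeration but by a careful reduction through lower-dimensional lemmas. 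As it stands, your outline reproduces the correct skeleton (split, induct, bridge, strengthen to $2$-path covers) but omits exactly the overflow and prescribed-edge-budget analyses that make the theorem hard.
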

In this paper, we consider a set of matching $M$ such that $xy\notin M$, and show that matching $|M| \leq 3n - 13$ for \( n \geq 5 \) can be extended to a Hamiltonian path in  \( Q_n  \), between every two vertices $x$ and $y$ from distinct partite sets.
\begin{theorem}\label{main}
Consider a set of matchings $|M| \leq 3n - 13$ in \( Q_n \) for \( n \geq 5 \), and $x, y$ are two vertices from distinct partite set with $xy\notin M$. Then, a Hamiltonian path exists in $Q_n$ containing $M$ joining $x$ and $y$.
\end{theorem}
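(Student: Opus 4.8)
The plan is to argue by induction on $n$, using Theorem \ref{n1} as the base and Theorem \ref{wz16} to absorb exactly the overflow that induction cannot reach. The arithmetic driving everything is this: for $5\le n\le 9$ one has $3n-13\le 2n-4$, and since $xy\notin M$ and every vertex meets at most one edge of a matching, $M$ is a linear forest satisfying the natural necessary condition; hence Theorem \ref{n1} settles these cases directly, and they serve as the base. For $n\ge 10$ I would assume Theorem \ref{main} for $Q_{n-1}$, where the admissible size is $3(n-1)-13=3n-16$, exactly three less than the $3n-13$ available in $Q_n$. The inductive step splits $Q_n$ along a coordinate $i$ into $Q^0_{n-1}$ and $Q^1_{n-1}$, writing $M=M_0\cup M_1\cup M_c$ with $M_0,M_1$ the internal edges and $M_c$ the edges crossing in direction $i$. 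Because each edge of $M$ crosses in exactly one coordinate, $\sum_i|M_c^{(i)}|=|M|\le 3n-13<3n$, so some coordinate has $|M_c|\le 2$; counting the coordinates with $|M_c|\ge 3$ (at most $\lfloor(3n-13)/3\rfloor=n-5$) shows at least five coordinates do. I then branch on whether one of these can also be chosen \emph{balanced}, i.e. $\max(|M_0|,|M_1|)\le 3n-16$, so that induction applies to each half.

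In the balanced regime I apply the inductive hypothesis inside each subcube and glue the two paths along the crossing edges. If $x,y$ lie in different subcubes and $M_c=\emptyset$ this is immediate: a Hamiltonian path of $Q^0_{n-1}$ from $x$ to a chosen $u$ through $M_0$, the dimension-$i$ edge $uu'$, and a Hamiltonian path of $Q^1_{n-1}$ from $u'$ to $y$ through $M_1$. The genuine difficulty, and the crux of the whole argument, is forcing the (at most two) edges of $M_c$ into the path while respecting the bipartiteness of $Q_{n-1}$: the endpoints dictated by a crossing edge need not lie in the partite classes that a single spanning path of a half demands. I expect to resolve this by sometimes introducing one extra, non-matching crossing edge for parity, and correspondingly decomposing a subcube into two disjoint spanning paths with prescribed ends. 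Reconciling these forced endpoints with the partite-class constraints, across the cases given by the positions of $x,y$ and the multiplicity of $M_c$, is the main obstacle; I would isolate it in a short auxiliary lemma producing a two-path spanning system of $Q_{n-1}$ with prescribed endpoints containing a given small matching, itself proved from Theorem \ref{n1}.

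The concentrated regime is comparatively clean thanks to the identity $3n-13=3(n-1)-10$. If no balanced coordinate with few crossings exists, then one subcube, say $Q^0_{n-1}$, carries more than $3n-16$ edges, so $|M_0|\le 3n-13=3(n-1)-10$ and at most two edges of $M$ lie outside it. This lets me invoke Theorem \ref{wz16} to extend $M_0$ to a Hamiltonian cycle $C$ of $Q^0_{n-1}$, and then convert $C$ into the required $x$--$y$ path of $Q_n$ by bridging through $Q^1_{n-1}$. When $x,y$ both lie in $Q^0_{n-1}$ I delete two edges of $C$, one incident to each of $x,y$ and avoiding $M_0$, obtaining two arcs whose far endpoints $p,q$ lie in opposite classes (by the alternation of classes along $C$ together with $x,y$ being in opposite classes), so the dimension-$i$ images of $p,q$ bound a Hamiltonian path of $Q^1_{n-1}$ that splices the two arcs into one $x$--$y$ path; the existence of that path uses the Hamiltonian laceability of $Q_{n-1}$ (Havel's theorem). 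When $x,y$ are separated, or both lie in the light subcube, a single deletion from $C$, or a single detour of the $Q^1_{n-1}$ Hamiltonian path through $C$, achieves the same effect.

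Finally, the at most two stray edges in the concentrated regime — whether internal to the light subcube or themselves crossing — are accommodated either as the bridging edges or by routing the light-subcube path with Theorem \ref{n1} (a matching of size $\le 2\le 2(n-1)-4$ is an admissible linear forest there). A closing check verifies that all degenerate positions ($x,y$ adjacent on $C$, a crossing edge incident to $x$ or $y$, coincidences among chosen endpoints) are repaired by re-choosing the deleted or bridging edges, of which $C$ and the subcubes offer ample supply. I anticipate that the bulk of the write-up, and essentially all of its length, will be consumed by the parity bookkeeping of the balanced-regime gluing rather than by the concentrated regime.
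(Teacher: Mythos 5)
Your skeleton coincides with the paper's own: the same base case ($3n-13\le 2n-4$ for $n\le 9$, settled by Theorem \ref{n1}), the same split along a direction minimizing $|M\cap E_j|$ so that $|M_c|\le 2$, and the same pivotal identity $3n-13=3(n-1)-10$ that makes Theorem \ref{wz16} exactly sufficient. Where you genuinely diverge is the concentrated regime ($|M_0|\in\{3n-15,3n-14,3n-13\}$). The paper invokes Theorem \ref{wz16} only when both $x$ and $y$ lie in the \emph{light} subcube (Subcases 2.3, 3.3, 4.3); whenever an endpoint lies in the heavy subcube it instead peels off exactly $1$, $2$ or $3$ edges of $M_0$ to bring the rest down to $3(n-1)-13$, applies the induction hypothesis, and re-inserts the peeled edges via detours through $Q^1_{n-1}$ using spanning $2$-, $3$- and $4$-path lemmas (the configuration analyses of Figures \ref{Fig.2} and \ref{Fig.7}). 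Your cycle-opening alternative is viable: your parity computation $p(p)=p(y)\neq p(x)=p(q)$ is correct, and since $|M_1|+|M_c|\le 2$ in this regime, the degeneracies you defer (cuts forced to the same side of $C$, so that one arc has both $x$ and $y$ as ends; a crossing matching edge incident to $x$ or $y$; $p(u_1)=p(v_1)$ for $M_c=\{u_0u_1,v_0v_1\}$, which forces two excursions and four crossing edges) are all repairable with exactly the $Q^1$-side tools the paper develops for its own splicing, namely Lemmas \ref{a4}, \ref{a6}, \ref{a7} and Theorem \ref{a8}. What each route buys: yours avoids the Figure \ref{Fig.2}/\ref{Fig.7} reinsertion casework entirely; the paper's avoids the cyclic-order and interleaving analysis of multiple cuts on $C$. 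The bookkeeping is comparable, and your version is arguably more uniform across $|M_0|=3n-15,3n-14,3n-13$.

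The one place your plan would break \emph{as written} is the balanced regime, which you correctly identify as the crux but then wave off: you claim the needed two-path system is "a short auxiliary lemma \ldots itself proved from Theorem \ref{n1}". The matching to be carried by the second subcube there has size up to $\lfloor(3n-13)/2\rfloor$ — linear in $n$, not small — and a spanning $2$-path (let alone $3$-path, needed when $|M_c|=2$ and $x,y$ are split across the subcubes) with \emph{prescribed} endpoint pairs containing such a matching does not follow from Theorem \ref{n1} by edge deletion: removing an edge $st$ from a Hamiltonian path $P_{ab}$ only realizes endpoint pairs $(a,s),(t,b)$ in which $s$ and $t$ are adjacent, and the feet of your bridges (neighbors of the cut vertices on an already-fixed path) leave you no freedom to force that adjacency. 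This is precisely why the paper imports Lemma \ref{wz} from Wang--Zhang and proves its new Lemma \ref{2.11} (a spanning $3$-path containing a matching of size $\le 2n-10$) by a separate full induction occupying a substantial part of the paper. Your argument is rescued simply by citing these lemmas instead of deriving them from Theorem \ref{n1}, but the derivation you sketch would fail. Two smaller points: Havel's theorem alone does not supply the $Q^1_{n-1}$ path in your concentrated regime, since $M_1$ may be nonempty (you accommodate strays elsewhere, but that sentence drops them); and in the balanced regime you never need induction in both halves — after normalizing $|M_1|\le|M_0|$ one has $|M_1|\le\frac{3n-13}{2}<2(n-1)-4$ for $n\ge 10$, so Theorem \ref{n1}, Lemma \ref{wz} and Lemma \ref{2.11} already cover the light half, exactly as in the paper's Case 1.
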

The structure of this paper is as follows: Section \ref{sec2} introduces the notation and some preliminary results used in the proof of our main result, while Section \ref{sec3} presents the main results.
\section{Preliminaries and lemmas}\label{sec2}
Notation and terminology used below, if not explicitly defined here, can be found in \cite{bondy}. The edge set and vertex set in a graph \( G \), denoted by \( E(G) \) and \( V(G) \), respectively. Consider  the two subgraphs \( H \) and \( H'\)  of \( G \). The  \( H + H' \) represent the graph with edge set \( E(H) \cup E(H') \) and vertex set \( V(H) \cup V(H') \).  The set of vertices incident to the edges in \( P \),  denoted by \( V(P) \), where a subset \( P \subseteq E(G) \). The graph  \( H + P \) with vertex set \( V(H) \cup V(P) \) and edge set \(E(H) \cup P \). When removing the edges set  \( P \subseteq E(G) \)  from \( G \), the resultant graph is denoted as \(G-P \). Similarly, a subset \( S \subseteq V(G) \), \(G-S \) describes the graph obtained by deleting all vertices in \( S \) and all edges incident to those vertices. When  \( P = \{e\} \) or \( S = \{x\} \), the notations \( G - P \), \( G - S \),  \( H + P \),  and \( V(P) \) are simplified to  \( G - e \), \( G - x \), \( H + e \),  and \( V(e) \), respectively.

In the hypercube graph $Q_n$ for $[n]=\{1, 2, \dots, n\}$  with vertex set $V(Q_n) = \{ u : u = u^1 \dots u^n\}$  and  $u^i \in \{0, 1\}$ for $i \in [n]\}$ and edge set $E(Q_n) = \{xy : |\Delta(x, y)| = 1 \}$, where $\Delta(x, y) = \{ i \in [n] : x^i \neq y^i\}$. The $dim(xy)$ is dimension of an edge $xy \in E(Q_n)$ is the integer $j$ with $\Delta(x, y) = \{ j \}$. The exact $j$-dimension edges form a layer, defined as $E_j$, in $Q_n$. Note that the edge set $E(Q_n)$ is divided into $n$ layers, each having $2^{n-1}$ edges.

The parity $p(u)$ of a vertex $u$ in $Q_n$ is described by $p(u) = \sum_{i=1}^n u^i(\text{mod 2})$. Hence, there are \( 2^{n-1} \) vertices of parity 0 and \( 2^{n-1} \) vertices of parity 1  in \( Q_n \). The vertices with parity 0 and 1  are white and black, respectively. Since \( Q_n \) is a bipartite graph, the color classes form a bipartition of \( Q_n \), meaning that \( p(x) = p(y) \) if and only if \( d(x, y) \) is even. A set of pairs \( \{ \{ a_i, b_i \} \}_{i=1}^k \), where \( a_i \) and \( b_i \) are different vertices of \( Q_n \), is considered balanced if the number of white and black vertices are equal. In the below proofs, we generally use that \( Q_n \) is bipartite.

For every $j \in [n]$, consider $Q_{n-1, j}^{0}$ and $Q_{n-1, j}^{1}$ denote two $(n-1)$-dimensional subcubes of a hypercube $Q_n$, where the $j$ superscript be omitted if the context is clear. These subcubes are by vertices of $Q_n$ where the $j$-th coordinate is fixed at 0 or 1, respectively. Note that $Q_n - E_j = Q_{n-1}^1 \cup Q_{n-1}^0$, and we say $Q_n$ is divided into the two $(n-1)$-dimensional subcubes $Q_{n-1}^0$ and $Q_{n-1}^1$ by $E_j$. For $\alpha \in \{0, 1\}$, any vertex $u_\alpha \in V(Q_{n-1}^\alpha)$ has a unique neighbor in $Q_{n-1}^{1-\alpha} $, denoted as $u_{1-\alpha}$. Similarly, for any edge $uv=e \in E(Q_{n-1}^\alpha)$, the corresponding edge in $Q_{n-1}^{1-\alpha}$ is $e_{1-\alpha}$ denotes the edge  $u_{1-\alpha}v_{1-\alpha}\in E(Q_{n-1}^{1-\alpha})$.
\begin{lemma}[\textup{\cite{T07}}]\label{t07}
Consider two disjoint edges $e_1$ and $e_2$ in $Q_n $ where $n \geq 2$. Then, $Q_n$ can be split into two $(n-1)$-dimensional subcubes, such that one contain $e_1$ and the other $e_2$.
\end{lemma}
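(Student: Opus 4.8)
The plan is to exhibit an explicit dimension $j$ along which to split $Q_n$. Write $e_1 = a_1 b_1$ and $e_2 = a_2 b_2$, and set $s = \dim(e_1)$, $t = \dim(e_2)$. Splitting $Q_n$ by $E_j$ places an edge $e_i$ entirely inside one of the two subcubes $Q_{n-1,j}^{0}, Q_{n-1,j}^{1}$ exactly when $j \neq \dim(e_i)$, since then both endpoints of $e_i$ share their $j$-th coordinate; moreover the two edges fall into \emph{different} subcubes precisely when these common $j$-th coordinates disagree. Hence it suffices to find a coordinate $j \notin \{s,t\}$ at which $a_1$ and $a_2$ differ, i.e. an element of $\Delta(a_1,a_2)\setminus\{s,t\}$. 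I note first that this set is independent of which endpoints of $e_1,e_2$ we happen to name, because outside $\{s,t\}$ all endpoints of a given edge agree coordinatewise.

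First I would prove the key nonemptiness claim $\Delta(a_1,a_2)\setminus\{s,t\}\neq\emptyset$, equivalently $\Delta(a_1,a_2)\not\subseteq\{s,t\}$, using only that $e_1$ and $e_2$ are disjoint. Suppose for contradiction $\Delta(a_1,a_2)\subseteq\{s,t\}$, so $a_1$ and $a_2$ agree in every coordinate outside $\{s,t\}$. If $s=t$, then $a_1$ and $a_2$ agree outside coordinate $s$, whence $a_2\in\{a_1,\,a_1\text{ with its }s\text{-th bit flipped}\}=\{a_1,b_1\}$; either way $e_1$ and $e_2$ share a vertex, a contradiction. If $s\neq t$, then all four endpoints $a_1,b_1,a_2,b_2$ agree outside $\{s,t\}$ and therefore lie in the single $2$-dimensional subcube $C$ obtained by flipping only coordinates $s$ and $t$, which is a $4$-cycle. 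Inside $C$ the edge $e_1$ is an $s$-edge while $e_2$ is a $t$-edge; since in a $4$-cycle any $s$-edge and any $t$-edge meet at a corner, $e_1$ and $e_2$ share a vertex, again contradicting disjointness.

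With a coordinate $j\in\Delta(a_1,a_2)\setminus\{s,t\}$ secured, I would finish by verifying it does the job. Because $j\neq s$, both endpoints of $e_1$ carry the same $j$-th bit $a_1^{\,j}$, so $e_1\subseteq Q_{n-1,j}^{\,a_1^{\,j}}$; likewise $e_2\subseteq Q_{n-1,j}^{\,a_2^{\,j}}$; and $a_1^{\,j}\neq a_2^{\,j}$ since $j\in\Delta(a_1,a_2)$. Thus the decomposition $Q_n - E_j = Q_{n-1,j}^{0}\cup Q_{n-1,j}^{1}$ separates $e_1$ and $e_2$ into distinct $(n-1)$-dimensional subcubes, which is exactly the assertion of the lemma.

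The only substantive content is the nonemptiness claim $\Delta(a_1,a_2)\not\subseteq\{s,t\}$, so the main obstacle — modest as it is — lies in the case $s\neq t$, where one must recognize that confining both edges to a common $2$-subcube forces them to be two incident edges of a $4$-cycle rather than a pair of parallel edges, hence sharing a vertex. Everything else is coordinate bookkeeping, and the argument is uniform for all $n\geq 2$ because it never inspects more than the two coordinates $s$ and $t$.
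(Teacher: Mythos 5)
Your proof is correct and complete. Note that the paper itself gives no proof of this lemma --- it is quoted from Tsai and Lai \cite{T07} as a known result --- so there is no internal argument to compare against; what you have supplied is a self-contained replacement for the citation. The two checkpoints both hold: the reduction of the lemma to the nonemptiness of $\Delta(a_1,a_2)\setminus\{s,t\}$ is exactly right (with the correct observation that this set does not depend on the choice of endpoint labels), and the case analysis on whether $\dim(e_1)=\dim(e_2)$ is sound. In the case $s=t$, the inclusion $\Delta(a_1,a_2)\subseteq\{s\}$ does force $a_2\in\{a_1,b_1\}$, contradicting disjointness; in the case $s\neq t$, confining all four endpoints to the common $2$-dimensional subcube is the crux, and your observation that an $s$-edge and a $t$-edge of a $4$-cycle always meet at a corner (the disjoint pairs in $C_4$ being precisely the parallel pairs of equal dimension) closes the argument. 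The proof is uniform in $n\geq 2$, handles the boundary case $n=2$ correctly (there $s=t$ is forced and the split along the other coordinate works), and in fact proves slightly more than the statement: the separating layer $E_j$ can always be chosen with $j\in\Delta(a_1,a_2)\setminus\{s,t\}$.
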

A path is a \( u,v \)-path connecting the vertices \( v \) and \( u \) and denoted as \( P_{uv} \). A spanning subgraph of a graph \( G \) consisting of disjoint \( k \)  paths is known as a spanning \( k \)-path of \( G \). In particular, when \( k = 1 \), the spanning 1-path is the Hamiltonian path. If \( e \in E(P) \) then,  path \( P \) is containing the edge \( e \).

Note that the next classical result in \cite{H84} was obtained by Havel.
\begin{theorem}[\textup{\cite{H84}}]\label{h84}
Consider two vertices $x$ and $y$ in $Q_n$ with $p(y)\neq p(x)$. Then, $Q_n$ has a Hamiltonian path connecting $x$ and $y$.
\end{theorem}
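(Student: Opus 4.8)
The plan is to argue by induction on $n$, using the standard decomposition of $Q_n$ into two $(n-1)$-dimensional subcubes across a single layer $E_j$. For the base case $n=1$, the cube $Q_1$ is a single edge whose two endpoints are its only vertices and have opposite parity; that edge is itself the required Hamiltonian path. So assume $n \geq 2$ and that the statement holds for $Q_{n-1}$.

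Since $p(x)\neq p(y)$ we have $x\neq y$, hence $\Delta(x,y)\neq\emptyset$; fix a dimension $j\in\Delta(x,y)$ and split $Q_n$ along $E_j$ into $Q_{n-1}^0$ and $Q_{n-1}^1$. Because $x^j\neq y^j$ the two vertices fall into different subcubes, so relabel the split so that $x\in V(Q_{n-1}^0)$ and $y\in V(Q_{n-1}^1)$. The idea is to run a Hamiltonian path of $Q_{n-1}^0$ from $x$ to a carefully chosen exit vertex $w$, cross the single edge $ww_1\in E_j$ into the other subcube, and finish with a Hamiltonian path of $Q_{n-1}^1$ from $w_1$ to $y$; concatenating these two paths through the crossing edge yields a Hamiltonian path of $Q_n$ from $x$ to $y$.

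The crux is selecting $w\in V(Q_{n-1}^0)$ so that both inductive calls are legitimate. For the first subpath I need $x$ and $w$ to have opposite parity, and for the second I need $w_1$ and $y$ to have opposite parity. Since $w$ and its crossing neighbor satisfy $p(w_1)=p(w)+1 \pmod 2$, and since $p(y)=p(x)+1 \pmod 2$, a short parity computation shows both requirements collapse to the single condition $p(w)\neq p(x)$. For $n\geq 2$ the subcube $Q_{n-1}^0$ contains $2^{\,n-2}\geq 1$ vertices of parity opposite to $p(x)$, so such a $w$ exists and is automatically distinct from $x$; likewise $w_1\neq y$. Applying the inductive hypothesis inside each subcube then produces the two subpaths, and joining them across $ww_1$ completes the inductive step.

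The main obstacle is purely the parity bookkeeping: one must verify that the parity of the internal crossing vertex $w$ can be chosen to validate the inductive hypothesis on both sides simultaneously, and confirm that the endpoints of each sub-Hamiltonian path are genuinely distinct ($w\neq x$ and $w_1\neq y$). No additional machinery is required; in particular Lemma~\ref{t07} is unnecessary here, since splitting along a differing dimension always works and the two halves are linked by the single edge $ww_1$.
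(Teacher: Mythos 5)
Your proof is correct. Note that the paper itself offers no proof of this statement: it is Havel's classical theorem, quoted verbatim with a citation to \cite{H84}, so there is no in-paper argument to compare against. Your induction is the standard (essentially canonical) proof of this result, and the details check out: splitting along a dimension $j\in\Delta(x,y)$ separates $x$ and $y$ into the two subcubes; since $p(w_1)\equiv p(w)+1$ and $p(y)\equiv p(x)+1 \pmod 2$, the two inductive requirements $p(w)\neq p(x)$ and $p(w_1)\neq p(y)$ do collapse to the single condition $p(w)\neq p(x)$; such a $w$ exists since $Q_{n-1}^0$ contains $2^{n-2}\geq 1$ vertices of each parity; and the distinctness of the endpoint pairs ($w\neq x$, $w_1\neq y$) follows from the parity inequalities. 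You are also right that Lemma~\ref{t07} is irrelevant here. One cosmetic remark: parity inside a subcube differs from parity in $Q_n$ by the constant fixed coordinate, so ``opposite parity'' is preserved under the restriction --- worth one sentence if you write this up formally, but it does not affect correctness.
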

\begin{lemma}[\textup{\cite{DT05}}]\label{a2}
Consider $x,y \in V(Q_n)$, where $n\geq 2$, and $f \in E(Q_n)$ with $xy\neq f$ and $p(y)\neq p(x)$. Then, a Hamiltonian path $P_{xy}$ exists in $Q_n$ containing $f$.
\end{lemma}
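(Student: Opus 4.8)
The plan is to induct on $n$, using Havel's theorem (Theorem~\ref{h84}) on any subcube that need not contain $f$, and the inductive form of the present statement on the subcube that must contain $f$. For the base case $n=2$, the two vertices $x,y$ of opposite parity are necessarily adjacent, so $xy\in E(Q_2)$ and the unique Hamiltonian $x,y$-path is $Q_2-xy$; since this path uses all three edges other than $xy$ and $f\neq xy$, it contains $f$. The general inductive scheme is to split $Q_n=Q_{n-1}^{0}\cup Q_{n-1}^{1}$ along a well-chosen dimension $j$ and to realize the path as $P_0+w_0w_1+P_1$, where $P_0$ is a Hamiltonian path of $Q_{n-1}^{0}$, $P_1$ a Hamiltonian path of $Q_{n-1}^{1}$, and $w_0w_1\in E_j$ a single crossing edge. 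A parity count shows the two halves fit together exactly when the endpoint $w_0\in Q_{n-1}^{0}$ satisfies $p(w_0)=p(y)$ (equivalently $p(w_1)=p(x)$), so I always choose the crossing vertex this way.

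\textbf{Clean case.} Suppose some $j\in\Delta(x,y)$ has $j\neq\dim(f)$. Splitting along $j$ separates $x$ and $y$ (say $x\in Q_{n-1}^{0}$, $y\in Q_{n-1}^{1}$) and leaves $f$ inside one subcube $Q_{n-1}^{\alpha}$. If $\alpha=0$, pick a crossing vertex $w_0\in Q_{n-1}^{0}$ with $p(w_0)=p(y)$ and $xw_0\neq f$ (always possible, since at most one vertex of the correct parity is forbidden); the inductive hypothesis supplies a Hamiltonian $x,w_0$-path of $Q_{n-1}^{0}$ through $f$, and Theorem~\ref{h84} supplies a Hamiltonian $w_1,y$-path of $Q_{n-1}^{1}$, whose parities match by the choice of $w_0$. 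Gluing along $w_0w_1$ finishes this case, and $\alpha=1$ is symmetric (induction on $Q_{n-1}^{1}$, Havel on $Q_{n-1}^{0}$). Because $|\Delta(x,y)|$ is odd, the only situation not covered here is $\Delta(x,y)=\{\dim(f)\}$.

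\textbf{Hard case.} Now $x,y$ are adjacent with $\dim(xy)=\dim(f)=:k$ and $f\neq xy$. Split along $k$; then $x\in Q_{n-1}^{0}$, $y\in Q_{n-1}^{1}$ and $f=u_0u_1$ is itself a crossing edge with $u_0\in Q_{n-1}^{0}$. If $p(u_0)=p(y)$, I take $w_0w_1=f$ and apply Theorem~\ref{h84} to both halves, gluing along $f$. The delicate subcase is $p(u_0)=p(x)$: here $f$ cannot be the single crossing edge for parity reasons, yet it crosses the only dimension separating $x$ and $y$. Note $u_0\neq x$ (otherwise $u_1$ would be the $k$-neighbour of $x$, i.e. $f=xy$), and $x,u_0$ share their $k$-th coordinate, so $\Delta(x,u_0)$ is nonempty and contains some $j\neq k$. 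Re-splitting along such a $j$ places $x$ and $y$ together in $Q_{n-1}^{0}$ and isolates $f$ inside $Q_{n-1}^{1}$. I would then assemble the path with $f$ lying in the interior of the $Q_{n-1}^{1}$-portion, which forces a \emph{spanning $2$-path} of $Q_{n-1}^{0}$ (two disjoint paths, one ending at $x$ and one at $y$, together covering $Q_{n-1}^{0}$) to be matched with a Hamiltonian path of $Q_{n-1}^{1}$ through $f$ obtained from the inductive hypothesis.

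I expect this last subcase to be the main obstacle, since it cannot be closed by Havel's theorem alone: it demands a two-path cover of $Q_{n-1}^{0}$ with prescribed endpoints rather than a single Hamiltonian path. The intended remedy is one further split of $Q_{n-1}^{0}$ (decomposing $Q_n$ into four $(n-2)$-dimensional subcubes) so that $x$ and the target of one path land in one piece and $y$ and the target of the other land in another, producing the spanning $2$-path from two applications of Theorem~\ref{h84}; Lemma~\ref{t07} is the convenient tool for guaranteeing that $f$ can be isolated in a subcube disjoint from $x$ and $y$. The small cases $n\in\{2,3\}$, where the subcubes are too small to leave this freedom, I would dispatch directly as base cases.
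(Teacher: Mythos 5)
The paper never proves Lemma~\ref{a2}: it is imported verbatim from Dvo\v{r}\'ak \cite{DT05}, where it is a special case of the theorem on Hamiltonian cycles with prescribed edges, so there is no internal proof to compare your attempt against. Judged on its own, your induction is essentially correct. The base case $n=2$ and the clean case check out: with $j\in\Delta(x,y)$, $j\neq\dim(f)$, the parity bookkeeping is right ($p(w_0)=p(y)$ forces $p(w_1)\neq p(y)$, so Theorem~\ref{h84} applies on the $f$-free side), and the only forbidden crossing vertex is the $f$-neighbour of $x$, at most one among the $2^{n-2}\geq 2$ candidates of the correct parity. In the hard case, the subcase $p(u_0)=p(y)$ correctly uses $f$ itself as the unique crossing edge, and $u_0\neq x$, $u_1\neq y$ follow from $f\neq xy$. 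The delicate subcase $p(u_0)=p(x)$ also goes through as you describe: after resplitting along $j\in\Delta(x,u_0)\setminus\{k\}$, cutting $Q_{n-1}^{0}$ once more along $k$ separates $x$ from $y$, two applications of Theorem~\ref{h84} produce the spanning $2$-path $P_{xa_0}+P_{yb_0}$, the parities come out right ($p(a_0)=p(y)$, $p(b_0)=p(x)$ give $p(a_1)\neq p(b_1)$ and $a_1\neq b_1$ since $a_1,b_1$ differ in coordinate $k$), and the single bad choice $a_1=u_0$ (which would allow $a_1b_1=f$) is avoidable whenever $2^{n-3}\geq 2$, i.e.\ $n\geq 4$ --- which is exactly why your deferral of $n=3$ is necessary, though you assert that base case rather than verify it (it is a finite check, and true, e.g.\ since $1\leq 2n-3$ in Dvo\v{r}\'ak's theorem).

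Two minor quibbles. First, Lemma~\ref{t07} is neither needed nor directly applicable in the final step: it separates two \emph{disjoint edges} into different subcubes, whereas your own choice of $j\in\Delta(x,u_0)$ already isolates $f$ from $\{x,y\}$, so the appeal to it should be dropped. Second, the last subcase is written in the conditional (``I would then assemble\ldots''); the construction is sound as verified above, but a finished proof should state the choices of $a_0,b_0$ and the avoidance count explicitly, and actually dispatch $n=3$.
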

\begin{lemma}[\textup{\cite{WZ16}}]\label{a4}
Consider $e_1$ and $e_2$ be two disjoint edges in $Q_n$ where $n\geq 4$, and $u, v \in V(Q_n)$ such that $p(u) \neq p(v)$ and $uv \notin \{ e_1, e_2 \}$. Then, a Hamiltonian path exists in $Q_n$ joining $u$ and $v$ containing $e_1$ and $e_2$.
\end{lemma}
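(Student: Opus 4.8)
The plan is to establish the statement directly by cutting $Q_n$ into two $(n-1)$-dimensional subcubes and routing the path across the cut, invoking the single prescribed-edge result (Lemma~\ref{a2}) inside each half. First I would apply Lemma~\ref{t07} to the disjoint pair $e_1,e_2$ to obtain a dimension $j$ with $Q_n=Q_{n-1}^{0}\cup Q_{n-1}^{1}$ such that $e_1\in E(Q_{n-1}^{0})$ and $e_2\in E(Q_{n-1}^{1})$. With this split fixed, the global parity $p$ restricts to the bipartition of each subcube and flips across every edge of $E_j$. The argument then splits according to the location of the endpoints $u,v$; by the symmetry exchanging the two subcubes together with the labels $e_1,e_2$, it suffices to treat the two cases below.

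If $u$ and $v$ lie in different halves, say $u\in Q_{n-1}^{0}$ and $v\in Q_{n-1}^{1}$, a single crossing edge suffices. I would choose a vertex $w_0\in Q_{n-1}^{0}$ of parity opposite to $u$ and let $w_1\in Q_{n-1}^{1}$ be its neighbour across $E_j$, so that $p(w_1)=p(u)$ and hence $p(w_1)\neq p(v)$. Lemma~\ref{a2} applied in $Q_{n-1}^{0}$ produces a Hamiltonian path from $u$ to $w_0$ through $e_1$, and applied in $Q_{n-1}^{1}$ a Hamiltonian path from $w_1$ to $v$ through $e_2$; joining them along $w_0w_1$ gives the desired path. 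The only requirements are $uw_0\neq e_1$ and $w_1v\neq e_2$, which forbid at most two of the $2^{\,n-2}\geq 4$ eligible vertices $w_0$, so an admissible choice always remains.

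If $u$ and $v$ lie in the same half, say $u,v\in Q_{n-1}^{0}$, the opposite subcube carries no endpoint and must be visited by a detour using two crossing edges. I would first use Lemma~\ref{a2} in $Q_{n-1}^{0}$ to obtain a Hamiltonian path $R$ from $u$ to $v$ through $e_1$ (admissible since $uv\neq e_1$ by hypothesis and $p(u)\neq p(v)$). Next I would select an edge $a_0b_0\in E(R)$ with $a_0b_0\neq e_1$ whose parallel copy $a_1b_1$ in $Q_{n-1}^{1}$ is distinct from $e_2$; since at most two of the $2^{\,n-1}-1\geq 7$ edges of $R$ are excluded, such an edge exists. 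As $a_0,b_0$ are adjacent we have $p(a_1)\neq p(b_1)$, so Lemma~\ref{a2} yields a Hamiltonian path $S$ of $Q_{n-1}^{1}$ from $a_1$ to $b_1$ through $e_2$. Deleting $a_0b_0$ from $R$ and inserting $S$ via the crossing edges $a_0a_1$ and $b_0b_1$ produces the required Hamiltonian path of $Q_n$ from $u$ to $v$ containing both $e_1$ and $e_2$.

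The construction is uniform in $n$: Lemma~\ref{t07} holds for all $n\geq 2$, and each application of Lemma~\ref{a2} takes place in a subcube of dimension $n-1\geq 3$, so no separate base case is needed. I expect the only delicate point to be the parity and exclusion bookkeeping — confirming that the prescribed endpoint parities force the crossing edges to jump parity correctly, and that the handful of forbidden choices (at most two in each case) never exhaust the $2^{\,n-2}$ or $2^{\,n-1}-1$ available options, which is precisely where the hypothesis $n\geq 4$ is used.
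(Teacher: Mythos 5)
Your argument is correct. Note first that the paper itself gives no proof of Lemma~\ref{a4}: it is imported verbatim from \cite{WZ16}, so there is no in-text proof to compare against; your proposal is in effect a reconstruction of the standard argument (and it matches the route taken in \cite{WZ16}). The verification checks out at every point: Lemma~\ref{t07} applies for $n\geq 2$ and separates $e_1$ from $e_2$; in the split-endpoint case the parity bookkeeping is right ($p(w_1)=p(u)\neq p(v)$, which also guarantees $w_1\neq v$), and the two forbidden choices of $w_0$ (the mate of $u$ on $e_1$, the preimage of the mate of $v$ on $e_2$) cannot exhaust the $2^{n-2}\geq 4$ vertices of the correct parity when $n\geq 4$; in the same-half case the hypothesis $uv\neq e_1$ is exactly what Lemma~\ref{a2} needs, and excluding $e_1$ and the $j$-parallel preimage of $e_2$ removes at most two of the $2^{n-1}-1\geq 7$ edges of $R$, so a valid bridge edge $a_0b_0$ survives. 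One point worth making explicit, which you use silently: Lemma~\ref{a2} only requires the endpoint pair to differ from the prescribed edge, not to be disjoint from it, so your construction remains valid even when $u$ or $v$ (or $a_1$, $b_1$) is incident with a prescribed edge --- this is precisely why the lemma's hypothesis is the weak condition $uv\notin\{e_1,e_2\}$ rather than a disjointness condition, and your proof respects that generality throughout.
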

\begin{theorem}[\textup{\cite{DT05}}]\label{dt}
Consider pairwise distinct vertices $u, v, x, y$ of $ Q_n$ where $n \geq 2$, with $p(u) \neq p(v)$ and $p(x) \neq p(y)$. Then, (i) a spanning $2$-path $P_{uv} + P_{xy}$ exists in $Q_n $ moreover, (ii) the path $P_{uv}$ can be chosen such that $P_{uv} = uv$ if $d(u, v) = 1$ unless $n = 3$, $d(x, y) = 1$, and $d(xy, uv) = 2$.
\end{theorem}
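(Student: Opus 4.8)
The plan is to prove statements (i) and (ii) \emph{simultaneously} by induction on $n$, the inductive engine being a split $Q_n=Q_{n-1}^{0}\cup Q_{n-1}^{1}$ along a carefully chosen dimension $j$, together with Havel's Theorem~\ref{h84} (Hamiltonian paths between opposite-parity vertices of a subcube) and the inductive form of Theorem~\ref{dt} applied inside the two halves. The base cases $n=2$ and $n=3$ are finite checks: in $Q_2$ the condition $p(u)\neq p(v)$ forces $d(u,v)=d(x,y)=1$, so $\{u,v\}$ and $\{x,y\}$ are already two disjoint edges covering $V(Q_2)$, settling both parts; in $Q_3$ one enumerates the positions of the four vertices to produce the spanning $2$-path for (i) and to verify that the \emph{only} obstruction to $P_{uv}=uv$ in (ii) is the configuration $d(u,v)=d(x,y)=1$ with $d(xy,uv)=2$. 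In that configuration (for instance $u=000$, $v=001$, $x=110$, $y=111$) the two degree-two vertices $010$ and $100$ of $Q_3-\{u,v\}$ are forced to use both of their incident edges and collide at the endpoint $x=110$, so no $x,y$-Hamiltonian path of $Q_3-\{u,v\}$ exists, while a genuine spanning $2$-path still does, matching the exact wording of (ii).

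For the inductive step of part (i) ($n\geq4$) I would pick a dimension $j$ that does not place all four vertices in one subcube; such a $j$ exists because $u,v,x,y$ are pairwise distinct, and it yields either a $2$--$2$ or a $3$--$1$ split between $A:=Q_{n-1}^{0}$ and $B:=Q_{n-1}^{1}$. \textbf{Separated pairs} ($\{u,v\}\subseteq A$, $\{x,y\}\subseteq B$): Havel's Theorem~\ref{h84} gives a Hamiltonian $u,v$-path in $A$ and a Hamiltonian $x,y$-path in $B$, whose union is the desired spanning $2$-path. \textbf{Crossed pairs} ($u,x\in A$, $v,y\in B$): choose connector vertices $a,b\in A$ of the parities dictated by the parity pattern, apply the inductive Theorem~\ref{dt}(i) in $A$ to get a spanning $2$-path with pairs $(u,a),(x,b)$ and in $B$ to get one with pairs $(a',v),(b',y)$, where $a',b'$ are the cross-neighbours of $a,b$; gluing along the cross-edges $aa'$ and $bb'$ produces $P_{uv}$ and $P_{xy}$. \textbf{Three--one split} (say $u,v,x\in A$, $y\in B$): apply Theorem~\ref{dt}(i) in $A$ with pairs $(u,v),(x,a)$, then extend the $x$-$a$ subpath by the cross-edge $aa'$ followed by a Havel Hamiltonian path of $B$ from $a'$ to $y$; the remaining pair gives $P_{uv}$. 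The symmetric three--one subcases are identical after relabelling, and the $2^{n-2}$ candidate connectors of each colour in a half give ample room to keep all endpoints distinct and, when a half is $Q_3$, to avoid its single forbidden configuration.

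For part (ii) ($d(u,v)=1$) the goal is an $x,y$-Hamiltonian path of $Q_n-\{u,v\}$, and the key observation is that this reduces to the \emph{same} pair of inductive statements. I would choose a dimension $j\neq\dim(uv)$ for which at least one of $x,y$ lies on the opposite side from $u$; such a $j$ exists, for otherwise both $x$ and $y$ would agree with $u$ in every coordinate but $\dim(uv)$, forcing $\{x,y\}\subseteq\{u,v\}$, a contradiction. This split keeps $u,v$ together in $A$ while sending, say, $x$ into $B$, and leaves only two shapes. If $y\in B$ as well, take a spanning $2$-path of $B$ with pairs $(x,c),(y,d)$ by Theorem~\ref{dt}(i) and a Hamiltonian $c',d'$-path of $A-\{u,v\}$ by Theorem~\ref{dt}(ii) (with $P_{uv}=uv$), and glue along $cc',dd'$. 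If $y\in A$, take a Hamiltonian $y,c$-path of $A-\{u,v\}$ by Theorem~\ref{dt}(ii) and a Havel Hamiltonian path of $B$ from $c'$ to $x$, glued along $cc'$. In both shapes the connectors can be chosen with the parities forced by $p(u)\neq p(v)$ and $p(x)\neq p(y)$ and so as to avoid the forbidden $Q_3$-configuration when $n=4$; thus the joint induction closes and the exceptional case surfaces exactly at $n=3$.

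The step I expect to be the main obstacle is the bookkeeping of parities, distinctness, and forbidden sub-configurations in the crossed-pair case of part (i) and throughout part (ii): one must verify that the connectors $a,b$ (resp.\ $c,d$) can always be chosen so that all four endpoints fed to each inductive call are pairwise distinct, of the correct opposite parities, and never realise the $n-1=3$ exception of Theorem~\ref{dt}(ii). Establishing that these finitely many constraints are jointly satisfiable---cleanly for $n\geq5$, where both halves have dimension $\geq4$ and no exception can arise, and by hand for the boundary value $n=4$---is the crux; once it is in place, Havel's theorem and the inductive hypothesis assemble the spanning $2$-path mechanically.
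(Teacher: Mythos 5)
This statement is one the paper does not prove at all: it is imported as a known tool from Dvo\v{r}\'ak \cite{DT05}, so there is no in-paper argument to compare yours against, and your proposal has to be judged as a self-contained re-proof. On that footing it is essentially correct. The joint induction on (i) and (ii) with a subcube split is exactly the standard split-and-glue scheme of this literature, and the checkpoints all verify: in $Q_2$ opposite parity forces adjacency, so the two pairs are already two disjoint edges; your $Q_3$ obstruction analysis is right --- with $u=000$, $v=001$, the degree-two vertices $010$ and $100$ of $Q_3-\{u,v\}$ must use both incident edges and overload the endpoint $x=110$ --- and the exceptional pair $\{x,y\}$ is precisely the antipodal edge of $uv$, which one checks is the \emph{unique} edge of $Q_3$ at distance $2$ from $uv$; your existence argument for a splitting dimension in part (ii) (otherwise $\{x,y\}\subseteq\{u,v\}$) is correct; and the parity bookkeeping on the connectors is consistent, since $p(c)=p(y)$ and $p(d)=p(x)$ are forced and automatically give $p(c')\neq p(d')$ for the inner call. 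The point you rightly flag as the crux --- avoiding the $n-1=3$ exception in the inner application of (ii) when $n=4$ --- does go through: the bad configuration pins the pair $\{c',d'\}$ to the single antipodal edge of $uv$ in the half $Q_3$, while each connector ranges over a parity class of size $4$ with at most two exclusions, so an admissible alternative always exists; a similar count ($\geq 1$ choice after at most two or three exclusions from a class of size $2^{n-2}\geq 4$) settles the distinctness constraints in the crossed-pair and three--one cases of part (i).

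What remains asserted rather than executed is the finite enumeration for part (i) at $n=3$ and for part (ii) at $n=3$ away from the exceptional configuration; these are genuine but routine finite checks, and your three shapes for part (i) do exhaust the placements of $u,v,x,y$ up to relabelling. So: correct as a sketch, with the delicate points correctly identified and, as far as I can verify, all jointly satisfiable; to be a complete proof it only needs those two finite verifications and the connector-availability counts written out explicitly.
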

\begin{lemma}[\textup{\cite{WZ16}}]\label{a1}
Consider \( xy \) and \( f \) are two disjoint edges in \( Q_n \) where \( n \geq 5 \), and consider \( u, v \in V(Q_n) \setminus \{x, y\} \) with \( p(u) \neq p(v) \) and \( uv \neq f \). Then, a Hamiltonian path exists in \( Q_n - \{x, y\} \) connecting \( u \) and \( v \) containing \( f \).
\end{lemma}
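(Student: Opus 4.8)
The plan is to separate the two prescribed edges by a coordinate, confine the deleted pair to one half, and stitch a single path across the two halves along the connecting matching. Since $xy$ and $f$ are disjoint, Lemma~\ref{t07} partitions $Q_n$ into two $(n-1)$-subcubes $A=Q_{n-1}^0$ and $B=Q_{n-1}^1$ along some dimension $j$ with $xy\in E(A)$ and $f\in E(B)$; I write $a_1\in B$ for the mirror of $a\in A$ across the perfect matching $E_j$, and $b_0\in A$ for the mirror of $b\in B$. Deleting $x,y$ leaves $A-\{x,y\}$ balanced (one white and one black vertex gone), while $B$ is an intact $(n-1)$-cube with $n-1\ge 4$. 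The engine of the whole argument is a laceability statement for $A-\{x,y\}$ that I will establish first: for $m\ge 4$, an edge $xy$ of $Q_m$, and any $a,b\in V(Q_m)\setminus\{x,y\}$ with $p(a)\ne p(b)$, the graph $Q_m-\{x,y\}$ has a Hamiltonian path from $a$ to $b$. To get this I apply Theorem~\ref{dt} to the four distinct vertices $x,y,a,b$ (legitimate since $p(x)\ne p(y)$ and $p(a)\ne p(b)$), choosing the branch that may be forced to a single edge to be the one on $\{x,y\}$: as $d(x,y)=1$ and $m\ge 4$ rules out the exceptional configuration, part (ii) produces a spanning $2$-path $xy+P_{ab}$, and then $P_{ab}$ spans $V(Q_m)\setminus\{x,y\}$, i.e.\ it is the desired Hamiltonian path. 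I apply this sub-lemma to $A$, whose dimension is $n-1\ge 4$.

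With the sub-lemma in hand I split into cases by the location of $u$ and $v$. If $u,v\in B$, I choose an edge $pq$ of $B$ disjoint from $f$ and avoiding $x_1,y_1,u,v$ (possible since $B$ has at least $16$ vertices and at most six positions are constrained); Lemma~\ref{a4}, applicable because $n-1\ge 4$, then yields a Hamiltonian path $P_B$ of $B$ from $u$ to $v$ through both $f$ and $pq$. The mirrors $p_0,q_0\in A-\{x,y\}$ have opposite parity, so the sub-lemma gives a Hamiltonian path $P_A$ of $A-\{x,y\}$ from $p_0$ to $q_0$; replacing the interior edge $pq$ of $P_B$ by the detour $p\,p_0\,P_A\,q_0\,q$ produces a Hamiltonian path of $Q_n-\{x,y\}$ from $u$ to $v$ through $f$.

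If $u,v\in A-\{x,y\}$, the sub-lemma gives a Hamiltonian path $P_A$ of $A-\{x,y\}$ from $u$ to $v$; I pick any edge $pq$ of $P_A$ other than the mirror $f_0\in E(A)$ of $f$ (there are at least two edges to choose from), apply Lemma~\ref{a2} in $B$ to obtain a Hamiltonian path $P_B$ from $p_1$ to $q_1$ through $f$ (permitted since $p_1q_1\ne f$), and splice it in along the matching edges $pp_1,qq_1$. The last case is $u\in A-\{x,y\}$, $v\in B$ (the reverse is symmetric): I want $P_A$ from $u$ to some $w_0\in A-\{x,y\}$, the matching edge $w_0w_1$, and $P_B$ from $w_1$ to $v$ through $f$. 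Tracking parities along $u,w_0,w_1,v$ shows the construction is consistent precisely when $p(w_0)=p(v)$, in which case $p(w_1)=p(u)\ne p(v)$, so automatically $w_1\ne v$. Among the at least $2^{n-2}-1$ vertices of $A-\{x,y\}$ of parity $p(v)$, only the single one producing $w_1v=f$ is forbidden, so a valid $w_0$ exists for $n\ge 5$; the sub-lemma supplies $P_A$, Lemma~\ref{a2} supplies $P_B$, and the two concatenate across $w_0w_1$ into the required path.

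The main obstacle is exactly the sub-lemma: producing a Hamiltonian path of $Q_{n-1}-\{x,y\}$ between prescribed opposite-parity endpoints. Everything hinges on Theorem~\ref{dt}(ii) being able to collapse one branch of a spanning $2$-path to the single edge $xy$, and the exceptional clause there is precisely what forces $n-1\ge 4$, i.e.\ $n\ge 5$; the same bound is what makes Lemma~\ref{a4} available in $B$ for the first case. Once the sub-lemma is secured, the remainder is routine splicing along the matching $E_j$, needing only the small counting checks above to guarantee that the auxiliary edge $pq$ and vertex $w_0$ can always dodge the finitely many constrained positions.
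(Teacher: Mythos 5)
Your argument is correct, and there is in fact no in-paper proof to compare it against: Lemma \ref{a1} is quoted from Wang and Zhang \cite{WZ16} and used as a black box, so what you have written is a self-contained derivation from the paper's own toolkit, which is a genuinely different (and more transparent) route than the paper's citation. The skeleton holds up under checking. Lemma \ref{t07} legitimately separates the disjoint edges \(xy\) and \(f\) into the two halves. Your sub-lemma — that \(Q_m-\{x,y\}\) has a Hamiltonian path between any opposite-parity pair \(a,b\notin\{x,y\}\) when \(xy\in E(Q_m)\) and \(m\ge 4\) — follows exactly as you say from Theorem \ref{dt}(ii) with the collapsible branch placed on the adjacent pair \(x,y\); the exceptional configuration lives only in \(Q_3\), and this is the one place the hypothesis \(n\ge 5\) is genuinely consumed. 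The splicing cases then all pass their side conditions: for \(u,v\in B\), Lemma \ref{a4} applies since \(pq\) avoids \(u,v\) (so \(uv\ne pq\)) and is disjoint from \(f\), and an eligible \(pq\) exists because the six forbidden vertices meet at most \(6(n-1)\) of the \((n-1)2^{n-2}\ge 32\) edges of \(B\); for \(u,v\in A-\{x,y\}\), the restriction \(pq\ne f_0\) is precisely what Lemma \ref{a2} needs (\(p_1q_1\ne f\)), and \(P_A\) has \(2^{n-1}-3\ge 13\) edges, so the choice exists; and in the split case your parity bookkeeping is right — the construction forces \(p(w_0)=p(v)\), whence \(p(w_1)=p(u)\ne p(v)\) gives \(w_1\ne v\) for free, and among the \(2^{n-2}-1\ge 7\) candidates for \(w_0\) at most one is excluded by \(w_1v=f\) (the mirror of the second endpoint of \(f\) when \(v\in V(f)\), which indeed has parity \(p(v)\)). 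Compared with importing the lemma wholesale, your proof costs a page but buys independence: everything rests on results already quoted in Section \ref{sec2} (Theorem \ref{dt} and Lemmas \ref{t07}, \ref{a2}, \ref{a4}), so the statement could be promoted from a cited fact to a proved one without enlarging the paper's set of external prerequisites.
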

\begin{lemma}[\textup{\cite{CR}}]\label{tt}
Consider $u, v, x, y$ are pairwise distinct vertices of $Q_n$ where $n\geq 4$, with $p(u) = p(v) \neq p(x) = p(y)$. Then, a spanning 2-path $P_{uv} + P_{xy}$ exists in $Q_n$.
\end{lemma}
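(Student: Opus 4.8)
The plan is to prove Lemma~\ref{tt} by induction on $n$, with base case $n=4$ settled by a direct (finite) check on the $16$-vertex cube. Before any splitting I record the parity bookkeeping that drives every case: since $Q_n$ is bipartite and $p(u)=p(v)$, any $u,v$-path has an odd number of vertices, with exactly one more vertex of colour $p(u)$ than of the opposite colour; the same holds for $P_{xy}$. As $2^n$ is even, two such odd paths can partition $V(Q_n)$, so the colour counts are automatically consistent. We may assume $u,v$ are white and $x,y$ are black. For the inductive step ($n\ge 5$) I split $Q_n=Q_{n-1}^0\cup Q_{n-1}^1$ along a dimension $j$, chosen (using that each subcube has $2^{n-1}\ge 16$ vertices) so that the auxiliary crossing vertices introduced below can be taken distinct from $u,v,x,y$ and from one another. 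Each subcube is an $(n-1)$-cube with $n-1\ge 4$, so both the inductive hypothesis and Theorem~\ref{dt} apply inside it, and Theorem~\ref{h84} supplies a Hamiltonian path between any two opposite-parity vertices of a subcube. I then distinguish cases by how $\{u,v,x,y\}$ distributes across the two subcubes.

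When all four vertices lie in one subcube, say $Q_{n-1}^0$, I first take a spanning $2$-path $P^0_{uv}+P^0_{xy}$ of $Q_{n-1}^0$ by the inductive hypothesis, and then absorb the empty subcube: pick an edge $ab$ on one of these paths, let $a_1b_1$ be its copy across $E_j$ (so $p(a_1)\ne p(b_1)$), use Theorem~\ref{h84} to route a Hamiltonian path of $Q_{n-1}^1$ from $a_1$ to $b_1$, and replace $ab$ by $a\,a_1\cdots b_1\,b$. When three vertices lie in $Q_{n-1}^0$ and one, say $y$, in $Q_{n-1}^1$, the three share the pattern (two whites $+$ one black) or (one white $+$ two blacks); pairing the two like-coloured vertices as one path and the odd one with a freshly chosen crossing vertex $c_0$ of the complementary colour yields a spanning $2$-path of $Q_{n-1}^0$ of exactly the type covered by the inductive hypothesis, while the single crossing edge $c_0c_1$ together with a Havel Hamiltonian path of $Q_{n-1}^1$ from $c_1$ to $y$ completes $P_{xy}$ and leaves $P_{uv}$ inside $Q_{n-1}^0$.

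The remaining cases split the four vertices two-and-two. If each subcube receives one white and one black vertex, say $u,x\in Q_{n-1}^0$ and $v,y\in Q_{n-1}^1$, I build both paths to cross $E_j$ once: a spanning $2$-path $P^0_{ua_0}+P^0_{xb_0}$ of $Q_{n-1}^0$ with $a_0,b_0$ of the parities opposite to $u,x$ (via Theorem~\ref{dt}), crossing edges $a_0a_1,b_0b_1$, and a spanning $2$-path $P^1_{a_1v}+P^1_{b_1y}$ of $Q_{n-1}^1$, where now each pair is monochromatic so the inductive hypothesis applies; concatenating gives $P_{uv}$ and $P_{xy}$. The crux is the case $u,v\in Q_{n-1}^0$ and $x,y\in Q_{n-1}^1$ (the two whites together, the two blacks together): here I keep $P_{uv}$ entirely inside $Q_{n-1}^0$ by taking, via the inductive hypothesis, a spanning $2$-path $P^0_{uv}+P^0_{ab}$ with $a,b$ black, and then thread the leftover monochromatic segment $P^0_{ab}$ into $P_{xy}$, crossing at $a$ and at $b$; the two stubs in $Q_{n-1}^1$, from $x$ to $a_1$ and from $b_1$ to $y$, have opposite-parity ends and form a spanning $2$-path of $Q_{n-1}^1$ supplied by Theorem~\ref{dt}, yielding $P_{xy}=P^1_{xa_1}+a_1a+P^0_{ab}+bb_1+P^1_{b_1y}$.

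I anticipate two sources of friction. The first is the base case $n=4$, where neither the inductive hypothesis nor the (otherwise convenient) behaviour of Theorem~\ref{dt} is available inside the $3$-subcubes, so it must be settled directly. The second is the bookkeeping of distinctness: in every case the crossing vertices and their copies must be chosen distinct from $u,v,x,y$ and from one another so that each invoked result genuinely applies, which is where the judicious choice of $j$ and the abundance of vertices in each subcube do the work. The one genuinely new idea is the threading step in the crux case: recognising that the ``unused'' monochromatic segment $P^0_{ab}$ of $Q_{n-1}^0$ can be spliced into the opposite-coloured path $P_{xy}$ is precisely what reconciles the same-colour-within-pairs parity pattern of Lemma~\ref{tt} with the subcube splitting.
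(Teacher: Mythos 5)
The paper gives no proof of Lemma~\ref{tt} to compare against: it is imported verbatim from Caha and Koubek \cite{CR}, where it arises as a special case of their general characterization of spanning multi-paths with prescribed endpoint pairs. Your self-contained induction is therefore a genuinely different route, and its inductive step checks out in every case. In the $4$--$0$ split, any edge $ab$ of the spanning $2$-path of $Q_{n-1}^0$ has $p(a_1)\neq p(b_1)$, so Theorem~\ref{h84} absorbs the empty subcube. In the $3$--$1$ split, taking $p(c_0)=p(x)$ keeps both pairs monochromatic so the inductive hypothesis applies in $Q_{n-1}^0$, and then $p(c_1)\neq p(c_0)=p(y)$ makes the Havel path $P^1_{c_1y}$ available, with $c_1=y$ excluded automatically by colour. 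In the mixed $2$--$2$ split, the crossing flips parities exactly so that $\{a_1,v\}$ and $\{b_1,y\}$ are again monochromatic pairs for the inductive hypothesis in $Q_{n-1}^1$; the only non-automatic distinctness conditions are $a_0\neq v_0$ and $b_0\neq y_0$, which the $2^{n-2}\geq 8$ vertices per colour class of each subcube easily permit, as you note. In the crux case $u,v\in Q_{n-1}^0$, $x,y\in Q_{n-1}^1$, the threading of the leftover black-ended path $P^0_{ab}$ into $P_{xy}$ is correct: $x,a_1,b_1,y$ are pairwise distinct (forced by colour, since $a_1,b_1$ are white and $x,y$ black) with $p(x)\neq p(a_1)$ and $p(b_1)\neq p(y)$, so Theorem~\ref{dt}(i) supplies the two stubs in $Q_{n-1}^1$. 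What your route buys is self-containedness from results the paper already quotes (Theorems~\ref{h84} and \ref{dt} plus the inductive hypothesis); what the citation buys the authors is brevity and the greater generality of \cite{CR}.

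The single incomplete point is the base case, and it is worth stressing that deferring $n=4$ to an unexecuted ``direct finite check'' is not a formality here. The statement is false in $Q_3$: take $u,x,v,y$ consecutive on a common $4$-cycle, e.g.\ $u=000$, $x=001$, $v=011$, $y=010$; a short exhaustive check shows no spanning $2$-path $P_{uv}+P_{xy}$ exists. So your induction genuinely cannot bottom out below $4$, the $n=4$ case carries real content, and as written it remains an IOU. Either carry out the verification (feasible by hand after reducing modulo the automorphism group of $Q_4$) or cite \cite{CR} for $n=4$ alone; with that settled, the rest of your argument stands.
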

\begin{theorem}[\textup{\cite{GP08}}]\label{a8}
Consider a balanced pair set $\{(a_i, b_i)\}_{i=1}^k$  in \( Q_n \) where \( n \geq 1 \), with $2k - | \{a_i b_i\}_{i=1}^k \cap E(Q_n)|< n$. Then, a spanning $k$-path \( \sum_{i=1}^{k}P_{a_i b_i} \) exists of \( Q_n \).
\end{theorem}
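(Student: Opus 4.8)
The plan is to induct on the dimension $n$, taking the classical single-path and two-path results as base cases and using a subcube split for the inductive move. The base cases are the instances with $k\le 2$, which hold for every $n$. When $k=1$ the balance hypothesis forces $p(a_1)\neq p(b_1)$, so a spanning $1$-path is simply a Hamiltonian $a_1,b_1$-path, provided by Theorem \ref{h84}; the hypothesis $2-m<n$ is exactly what makes $n$ large enough for the instance. When $k=2$ the four balanced endpoints form either two pairs of opposite colours or two monochromatic pairs of opposite colours, and a spanning $2$-path is then given by Theorem \ref{dt} or Lemma \ref{tt} respectively (the lone exceptional subcase of Theorem \ref{dt}, which occurs only for $n=3$, being checked by hand). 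Since $m\le k$ and hence $k\le 2k-m<n$, every instance with $n\le 3$ already has $k\le 2$, so these base cases are reached by the induction.

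For the inductive step take $k\ge 3$, so $n\ge 4$. I would pick a splitting dimension $j\in[n]$ and write $Q_n-E_j=Q_{n-1}^{0}\cup Q_{n-1}^{1}$, invoking Lemma \ref{t07} if a prescribed edge must be steered into a prescribed half. Call a pair \emph{internal} if its two endpoints lie in the same half and \emph{crossing} otherwise. A crossing pair $(a_i,b_i)$, say with $a_i\in Q_{n-1}^{0}$, is realised by fixing one crossing edge $w_iw_i'$ with $w_i\in Q_{n-1}^{0}$, $w_i'\in Q_{n-1}^{1}$; it becomes the sub-pair $(a_i,w_i)$ in $Q_{n-1}^{0}$ and $(w_i',b_i)$ in $Q_{n-1}^{1}$, and splicing the two sub-paths along $w_iw_i'$ restores an $a_i,b_i$-path. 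Internal pairs stay in their half, except that if a half would otherwise receive no endpoint I would push one internal pair through it by a double crossing, so that both halves carry at least one sub-pair. Because $s_0+c=k-s_1\le k$, each half inherits at most $k$ sub-pairs, and with $2^{n-1}$ vertices against $O(n)$ endpoints the new endpoints $w_i$ can be kept distinct from one another and from all prescribed vertices. Applying the induction hypothesis on each $(n-1)$-cube and reassembling along the crossing edges then produces the spanning $k$-path of $Q_n$.

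What must be checked to invoke the hypothesis is that each half is balanced and obeys the count bound $2k_\alpha-m_\alpha<n-1$. Balance reduces to one identity: with $D_0=B_0-W_0$ the black-minus-white discrepancy of the original endpoints inside $Q_{n-1}^{0}$ and $c$ the number of crossing pairs, choosing the colours of the $w_i$ (forced opposite to those of $w_i'$) so that $t=(c+D_0)/2$ of them are white balances $Q_{n-1}^{0}$; one checks $c+D_0$ is always even, and global balance gives $D_1=-D_0$, so the same $t$ balances $Q_{n-1}^{1}$ as well, this being feasible exactly when $|D_0|\le c$. Here lies the crux and the main obstacle: the count bound pushes for few crossing pairs, since each crossing pair is charged to both halves, while the balancing identity needs enough crossing pairs to absorb $D_0$. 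I expect to resolve this by exploiting the slack $2k-m<n$, which gives $k\le n-1$ and hence many free dimensions, to choose $j$ so that the endpoints spread across both halves with $|D_0|$ small and with few crossings, promoting an internal pair to a double crossing only in the tight configurations, and reducing the extremal cases---almost all pairs on one side, or $n$ close to the bound---to the base cases above.
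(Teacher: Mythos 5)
First, a point of orientation: this paper never proves Theorem \ref{a8} at all --- it is imported from \cite{GP08} and used as a black box --- so there is no in-paper argument to compare against, and your proposal must be judged on its own merits against what Gregor and Dvo\v{r}\'ak actually do. Your scaffolding is sound as far as it goes: the reduction of $n\le 3$ to $k\le 2$, the base cases via Theorem \ref{h84}, Theorem \ref{dt} and Lemma \ref{tt}, and the balance bookkeeping (choosing $t=(c+D_0)/2$ white vertices among the $w_i$, with $D_0\equiv c \pmod 2$ and feasibility exactly when $|D_0|\le c$) are all correct. But you have located the crux --- simultaneously meeting the per-half budget $2k_\alpha-m_\alpha<n-1$ and the balance-feasibility constraint $|D_0|\le c$ for some choice of splitting dimension and crossing-edge assignment --- and then written ``I expect to resolve this.'' That step is not a technicality to be discharged by slack; it is essentially the entire content of the theorem, and it is where the original proof spends its effort with a genuine case analysis (in fact on a strengthened inductive statement). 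So the proposal is a plan with an acknowledged hole, not a proof.

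To see that generic slack cannot close it, note two concrete failure modes. (a) If all $2k$ endpoints lie in a subcube of dimension about $\log_2(2k)$, then for almost every $j$ every pair is internal to one half; the only repair is your double crossing, and even in the best case --- both new sub-pairs $(a_i,w_0)$ and $(t_0,b_i)$ chosen to be edges --- one gets $2k_0-m_0=2(k+1)-(m+2)=2k-m$, which the hypothesis bounds by $n-1$ but not by the required $n-2$: the induction fails by exactly one. The same overshoot occurs whenever $c=0$ but $D_0\neq 0$ (say, a monochromatic pair in each half, of opposite colors), since balance then forces an inserted double crossing. (b) At the other extreme, each crossing pair is charged to both halves, so $(2k_0-m_0)+(2k_1-m_1)=2k+2c-(m_0+m_1)$, and since at most one side of each crossing pair can be made an edge (choosing $w_i$ adjacent to $a_i$ determines $w_i'$, and $(w_i',b_i)$ is generally not an edge), the sum is only bounded by roughly $2k-m+c$, which exceeds the available $2(n-2)$ once $c$ approaches $k\le n-1$. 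You would also need to handle pairs whose matched edge lies in $E_j$, whose natural split produces a degenerate single-vertex ``sub-pair'' $(a_i,a_i)$ not covered by the statement's hypothesis that the two vertices of a pair are distinct. None of these is fatal to the strategy --- it is, broadly, the right strategy --- but resolving them is the theorem, and the proposal stops exactly at its threshold.
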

\begin{lemma}[\textup{\cite{WZ16}}]\label{a5}
Consider \( e \in E(Q_n) \) where \( n \geq 5 \), and \( u, v, x, y \in V(Q_n) \) are distinct vertices such that \( p(u) \neq p(v) \), \( p(x) \neq p(y) \), and \( \{x, y\} \cap V(e) = \emptyset \). Then, a spanning 2-path \( P_{uv} + P_{xy} \) exists in \( Q_n \) containing \( e \).
\end{lemma}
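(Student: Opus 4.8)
The plan is to argue by induction on $n$, with base case $n=5$, realising the inductive step (and the base case) through a split of $Q_n$ into two $(n-1)$-dimensional subcubes along a carefully chosen dimension. Writing $e=ab$, I first pick the splitting dimension $j\neq\dim(e)$; this forces $e$ to lie entirely inside one subcube, which I call $Q^0$, while $Q^1$ is the other subcube and $E_j$ is the perfect matching joining them (Lemma \ref{t07} also lets me re-split later if a particular $j$ proves awkward). Because all of $u,v,x,y$ and both ends of $e$ keep their within-subcube parities predictable, the auxiliary results I intend to use in the subcubes—Havel's theorem (Theorem \ref{h84}), the single-edge Hamiltonian path lemma (Lemma \ref{a2}), and the spanning $2$-path results (Theorem \ref{dt}, Lemma \ref{tt})—all apply, since each subcube has dimension $n-1\ge 4$.

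The inductive step splits into cases according to how $u,v,x,y$ distribute between $Q^0$ and $Q^1$. In the cleanest configuration the pairs stay unsplit on opposite sides: if $\{u,v\}\subseteq Q^0$ and $\{x,y\}\subseteq Q^1$, I take $P_{uv}$ to be a Hamiltonian path of $Q^0$ through $e$ (Lemma \ref{a2}, valid because $uv\neq e$) and $P_{xy}$ a Hamiltonian path of $Q^1$ (Theorem \ref{h84}); these are disjoint, cover $Q_n$, and satisfy $e\in P_{uv}$. The symmetric situation $\{x,y\}\subseteq Q^0$, $\{u,v\}\subseteq Q^1$ is handled the same way, now using $\{x,y\}\cap V(e)=\emptyset$ to guarantee $xy\neq e$ so that Lemma \ref{a2} places $e$ on $P_{xy}$. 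When all four vertices fall into the $e$-subcube $Q^0$, I invoke the induction hypothesis (the statement for $Q_{n-1}$, available since $n-1\ge 5$) to obtain a spanning $2$-path of $Q^0$ through $e$, then absorb the empty subcube $Q^1$ by the standard splice: choose an edge $pq$ on one of the two sub-paths, delete it, and reconnect through the matching edges $pp',qq'\in E_j$ together with a Hamiltonian path of $Q^1$ from $p'$ to $q'$ (Theorem \ref{h84}), leaving $e$ untouched.

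In the remaining configurations a pair is broken across $E_j$ (a $3$--$1$ or a pair-split $2$--$2$ distribution). Here I build, inside each subcube, a spanning $2$-path (or a single path) with prescribed endpoints using Theorem \ref{dt}, or Lemma \ref{tt} when the within-subcube parities leave me with two equal-parity terminals, and I choose the free endpoints so that one sub-path from $Q^0$ and one from $Q^1$ stitch into $P_{uv}$ through a single matching edge of $E_j$, while the other two stitch into $P_{xy}$; the edge $e$ is forced onto whichever sub-path lives in $Q^0$ by applying the appropriate within-subcube result there—Lemma \ref{a2} when $Q^0$ carries a single sub-path, and the induction hypothesis when it carries a spanning $2$-path. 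The genuinely degenerate sub-cases are those in which $e$ is incident with an endpoint $u$ or $v$, and in particular $e=uv$: for $e=uv$ I simply set $P_{uv}=e$ and let $P_{xy}$ be a Hamiltonian path of $Q_n-\{u,v\}$ from $x$ to $y$, which exists by a fault-tolerant laceability result (cf.\ Lemma \ref{a1}), again giving a spanning $2$-path through $e$. For the base case $n=5$ the induction hypothesis is unavailable, so I instead note that four distinct vertices cannot all agree outside the single coordinate $\dim(e)$; hence some admissible $j$ keeps the four vertices from all landing in $Q^0$, and the base case is then covered by the easy and split configurations above using only Lemmas \ref{a2}, \ref{tt} and Theorems \ref{h84}, \ref{dt}, all valid at dimension $n-1=4$.

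The main obstacle I anticipate is the bookkeeping in the split-pair cases: after fixing $j$ I must check that the within-subcube parities of the prescribed endpoints match the hypotheses of Theorem \ref{dt} and Lemma \ref{tt}, and that a matching edge of $E_j$ with the required endpoints is available to merge the sub-paths without clashing with $e$ or violating $\{x,y\}\cap V(e)=\emptyset$. Arranging simultaneously that $e\subseteq Q^0$, that the four vertices split favourably, and that a usable connector edge exists is exactly where the choice of $j$ (and, when needed, a re-split via Lemma \ref{t07}) does the real work; the exceptional configurations of Theorem \ref{dt} never intervene because every subcube here has dimension at least $4>3$.
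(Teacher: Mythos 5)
The paper does not actually prove this lemma --- it imports it from \cite{WZ16} --- so the fair in-paper benchmark is the proof of its two-edge analogue, Lemma \ref{a6}, which follows exactly the template you propose: handle $uv=e$ via Lemma \ref{a1} (your use of it is correct, since $\{x,y\}\cap V(e)=\emptyset$ makes $x,y$ avoid the deleted adjacent pair), split $Q_n$ along a dimension $j\neq\dim(e)$ so that $e$ sits in one subcube, case on the distribution of $u,v,x,y$, and stitch subcube paths through $E_j$. Your easy cases and your absorption of an empty subcube (delete an edge $pq\neq e$ of the 2-path, reconnect through $pp_1,qq_1$ and a Havel path) are sound for $n\geq 6$.

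There is, however, a genuine gap, concentrated exactly where you placed the risk. The missing idea is the normalization that the proof of Lemma \ref{a6} performs first: rename within the pairs so that $p(u)=p(x)$, and then choose $j$ so that $u$ and $x$ land in \emph{different} subcubes with $e\notin E_j$ (possible because $p(u)=p(x)$ forces $d(u,x)\geq 2$, so $u,x$ differ in some coordinate other than $\dim(e)$). You instead fix $j\neq\dim(e)$ essentially arbitrarily, and this leaves split configurations your tools do not reach. Concretely, take $u,x\in Q^0_{n-1}$, $v,y\in Q^1_{n-1}$ with $p(u)=p(x)$ and $e\in E(Q^0_{n-1})$: no Hamiltonian path of $Q^0_{n-1}$ joins $u$ and $x$, so Theorem \ref{dt} and Lemma \ref{tt} do not apply directly, and a spanning 2-path of $Q^0_{n-1}$ through $e$ with prescribed ends is precisely the lemma in dimension $n-1$ --- which at the base case $n=5$ you have denied yourself. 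Your fallback (a Havel path to an auxiliary terminal, split at the internal prescribed vertex) hits a further snag you do not address: when the internal vertex is $u\in V(e)$, the path edge you must delete at $u$ may be $e$ itself, and rerouting around this breaks your stitching (one also meets collisions such as the crossing neighbor coinciding with $v$). So your claim that at $n=5$ "the easy and split configurations" are covered by Lemmas \ref{a2}, \ref{tt} and Theorems \ref{h84}, \ref{dt} alone is unsubstantiated as stated; and Lemma \ref{t07}, which separates two disjoint edges, is not the re-splitting tool these configurations need. With the $u$/$x$-separation normalization the equal-parity configuration never arises, the whole argument closes using only dimension-$(n-1)\geq 4$ tools, and --- notably --- the induction you build the proof around becomes unnecessary. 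A smaller omission: your case analysis skips the distribution in which both pairs lie entirely in the subcube \emph{not} containing $e$ (this arises for $n\geq 6$); it is easily fixed by a spanning 2-path of that subcube via Theorem \ref{dt} spliced, through a deleted edge, with a Hamiltonian path of the $e$-subcube through $e$ via Lemma \ref{a2}, but as written it is uncovered.
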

\begin{lemma}\label{a6}
Consider \( e_1, e_2 \in E(Q_n) \) and \( u, v, x, y \in V(Q_n) \) where \( n \geq 6 \), be distinct vertices such that \( p(u) \neq p(v) \), \( p(x) \neq p(y) \), and \( \{x, y\} \cap V(e_1) = \emptyset \) also \( \{x, y\} \cap V(e_2) = \emptyset \). Then, a spanning 2-path \( P_{uv} + P_{xy} \) exists in \( Q_n \) containing \( e_1 \) and $e_2$.
\end{lemma}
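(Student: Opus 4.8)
The plan is to reduce this two-edge statement to the single-edge spanning $2$-path result, Lemma~\ref{a5}, by cutting $Q_n$ along one dimension into two $(n-1)$-dimensional subcubes $Q^0$ and $Q^1$. Since $n\ge 6$, each subcube has dimension at least $5$, which is exactly what Lemma~\ref{a5} (and also Lemmas~\ref{a1}, \ref{a2}, \ref{a4}) requires, so this is presumably why the hypothesis is $n\ge 6$ rather than $n\ge 5$. First I would separate the two prescribed edges: if $e_1$ and $e_2$ are disjoint, Lemma~\ref{t07} supplies a dimension $j$ for which $e_1\in E(Q^0)$ and $e_2\in E(Q^1)$; if $e_1$ and $e_2$ share a vertex (so they form a path of length two), I would instead choose $j$ different from $\dim(e_1)$ and $\dim(e_2)$, which keeps both edges inside one subcube and is handled at the end as a degenerate configuration. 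Throughout I use that relative parities are inherited by each subcube, so $p(u)\ne p(v)$ and $p(x)\ne p(y)$ continue to hold after the cut, and that the asymmetric hypothesis $\{x,y\}\cap V(e_i)=\emptyset$ matches exactly the requirement in Lemma~\ref{a5} that the second endpoint pair avoid the prescribed edge.

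The core of the argument is a case analysis on how $u,v,x,y$ are distributed between $Q^0$ and $Q^1$ in the disjoint-edge case. The cleanest case is $\{u,v\}\subseteq V(Q^0)$ and $\{x,y\}\subseteq V(Q^1)$: here Lemma~\ref{a2} yields a Hamiltonian path of $Q^0$ from $u$ to $v$ through $e_1$ together with a Hamiltonian path of $Q^1$ from $x$ to $y$ through $e_2$, and their union is precisely the desired spanning $2$-path. When the four endpoints split differently across the two subcubes, I would build a spanning $2$-path inside each subcube with one \emph{auxiliary} endpoint placed on a crossing edge of dimension $j$, and then glue: if a subpath $P$ ends at $s\in V(Q^0)$ and a subpath $P'$ ends at the neighbour $s'\in V(Q^1)$, then $P+ss'+P'$ is a single path whose free ends are the remaining two endpoints. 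Using two such crossing edges $s_1s_1'$ and $s_2s_2'$ merges the four subpaths produced by Lemma~\ref{a5} (or by Theorem~\ref{dt} in whichever subcube carries no prescribed edge) into exactly the two paths $P_{uv}$ and $P_{xy}$. The auxiliary endpoints and crossing edges must be chosen with the correct parities and disjoint from $V(e_1)\cup V(e_2)\cup\{u,v,x,y\}$, and the slack $n-1\ge 5$ guarantees enough room for such a choice.

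The main obstacle is the bookkeeping that accompanies this merging: in each distribution one must exhibit a concrete split dimension together with auxiliary endpoints meeting all parity and disjointness hypotheses of the subcube lemmas \emph{simultaneously}, and verify that the two crossing edges join the subpaths into two paths rather than creating a cycle or leaving three components. Two degenerate situations deserve separate care. First, if $e_1=uv$ (or symmetrically if $e_2$ joins a prescribed endpoint pair), then no path from $u$ to $v$ other than the single edge can contain $e_1$; in that case $P_{uv}$ must be the edge $uv$ itself and the remaining vertices must be spanned by an $x$--$y$ path of $Q_n-\{u,v\}$ through $e_2$, which is supplied by Lemma~\ref{a1}. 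Second, in the shared-vertex configuration, where $e_1,e_2$ lie in the same subcube $Q^0$, the common vertex is forced onto a single path; here I would first route a spanning structure of $Q^0$ containing the length-two path $e_1e_2$ and then extend it through $Q^1$ using Havel-type Hamiltonian paths (Theorem~\ref{h84}, Lemma~\ref{a2}). Verifying that these special cases are exhaustive and mutually compatible with the parity constraints is, I expect, the most delicate part of the proof.
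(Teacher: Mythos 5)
Your proposal is correct in outline but takes a genuinely different decomposition from the paper, and it coincides with the paper only in its opening move: the degenerate case \(uv=e_1\) or \(uv=e_2\) is dispatched exactly as the paper does, by taking \(P_{uv}=uv\) and invoking Lemma~\ref{a1} for a Hamiltonian path of \(Q_n-\{u,v\}\) through the other edge. After that the routes diverge. You cut so as to separate the two \emph{edges}, using Lemma~\ref{t07} to place \(e_1\in E(Q^0_{n-1})\) and \(e_2\in E(Q^1_{n-1})\), so that each subcube carries a single prescribed edge and Lemma~\ref{a5} (or Lemma~\ref{a2}) suffices on each side; the cost is that all four endpoints \(u,v,x,y\) may fall anywhere, so you face the full range of endpoint distributions, and it is precisely this gluing/parity bookkeeping that you defer rather than verify. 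The paper instead cuts so as to separate the \emph{endpoints}: after relabelling so that \(v\notin V(e_1)\cup V(e_2)\) and \(p(u)=p(x)\), the bound \(d(u,x)\geq 2\) yields a direction \(j\) with \(u\in V(Q^0_{n-1})\), \(x\in V(Q^1_{n-1})\) and \(e_1,e_2\notin E_j\); typically both edges then lie in one subcube, where Lemma~\ref{a4} threads them through a single Hamiltonian path \(P^0_{uy}\) or \(P^0_{uv}\), and the casework collapses to the four positions of \(v\) and \(y\) alone. So the paper's split buys minimal casework at the price of needing the two-edge Hamiltonian-path tool Lemma~\ref{a4}, while your split keeps the per-subcube tools weaker but multiplies the distributions to check; your plan would go through, but as written the central merging argument (choice of crossing edges with the right parities, avoidance of \(V(e_1)\cup V(e_2)\cup\{u,v,x,y\}\), and the check that two crossing edges yield two paths rather than a cycle or three components) is asserted, not proved. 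One further point in your favour: both Lemma~\ref{t07} in your argument and Lemma~\ref{a4} in the paper's silently require \(e_1\) and \(e_2\) to be disjoint, a hypothesis the lemma statement omits (harmless in its applications, where the edges come from a matching); you are the only one to flag the shared-vertex configuration explicitly, and your sketch for it (a linear forest of two adjacent edges inside one subcube, via Theorem~\ref{n1}-type results, extended through the other subcube by Theorem~\ref{h84} or Lemma~\ref{a2}) is plausible, though again only sketched.
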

\begin{proof}
If \(uv = e_1\) or $uv = e_2$, Lemma \ref{a1} implies a spanning 2-path $P_{uv} = uv$ and $P_{xy}$ exists in \(Q_n\). Thus, the result holds. Now, we proceed under the assumption that \(uv\neq e_1\) or $uv\neq e_2$. W.l.o.g., suppose that \(v \notin V(e_1)\) and \(v \notin V(e_2)\) and \(p(v) = p(y) \neq p(u) = p(x)\). Since \(d(u, x) \geq 2\), there must be \(j \in [n]\) with \(u \in V(Q^0_{n-1})\) and \(x \in V(Q^1_{n-1})\), such that \(e_1, e_2 \notin E_j\). By symmetry we consider $e_1, e_2 \in E(Q_{n-1}^0)$, otherwise, \(e_1 \in E(Q^0_{n-1})\) and \(e_2 \in E(Q^1_{n-1})\).  
	
First, consider the case where \(y \in V(Q^0_{n-1})\). Since \(p(u) \neq p(y)\) and $uy \neq e_1$ and $uy \neq e_2$, apply Lemmas \ref{a4} or \ref{a2}, a Hamiltonian path \(P^0_{uy}\) exists containing $e_1$ and, $e_2$. If \(v \in V(Q^0_{n-1})\), let \(w_0\) be the neighbor of \(v\) such that \(w_0 \in V(P^0_{uy}[v, y])\). As \(v \notin V(e_1)\) and \(v \notin V(e_2)\), we have \(vw_0 \neq e_1\) and \(vw_0 \neq e_2\). Since \(p(w_1) = p(v) \neq p(x)\), Lemmas \ref{h84} or Lemma \ref{a2} when $e_1 \in V(Q_{n-1}^1)$, a Hamiltonian path \(P^1_{w_1x}\) exists in \(Q^1_{n-1}\) containing $e_1$. Thus spanning 2-path of \(Q_n\) is $P^0_{uy}[u, v],  P^0_{uy}[y, w_0] + \{w_0w_1\} + P^1_{w_1x}$. If \(v \in V(Q^1_{n-1})\), since \(|E(P^0_{uy}) \setminus \{e_1, e_2\}| = 2^{n-1} - 3 > 7\) for \(n \geq 6\), there exists an edge \(s_0t_0 \in E(P^0_{uy}) \setminus \{e_1, e_2\}\) with \(\{s_1, t_1\} \cap \{x, v\} = \emptyset\). Assume \(s_0\) lies on \(P^0_{uy}[u, t_0]\), indicating \(s_0\) is closer to \(u\) than \(t_0\) on \(P^0_{uy}\). Since \(p(s_1) \neq p(t_1)\), \(p(v) \neq p(x)\) and \(n - 1 \geq 5\) there must be \(p(s_1) \neq p(v)\), Lemmas \ref{a5} or \ref{dt} ensures a spanning 2-path \( P^1_{s_1v} + P^1_{t_1x} \) in \(Q^1_{n-1}\). The desired spanning 2-path of \(Q_n\) is $ P^0_{uy}[u, s_0] + \{s_0s_1\} + P^1_{s_1v} + P^0_{uy}[y, t_0] + \{t_0t_1\} + P^1_{t_1x}$.
	
Next, consider the case where \(y \in V(Q^1_{n-1})\). If \(v \in V(Q^0_{n-1})\), Lemmas \ref{a4} or \ref{a2} or Theorem \ref{dt}, Hamiltonian paths \(P^0_{uv}\) in \(Q^0_{n-1}\) , and \(P^1_{xy}\) in \(Q^1_{n-1}\) exist passing through \(e_1\) and $e_2$. The spanning 2-path is \( P^0_{uv} + P^1_{xy} \) in \(Q_n\).  If \(v \in V(Q^1_{n-1})\), since $|\{w_1 \in V(Q^1_{n-1}) \setminus \{x\} : p(w_1) = p(x)\}| \geq 2^{n-2} - 1 > 1$, there exists a vertex \(w_1 \in V(Q^1_{n-1}) \setminus \{x\}\) with \(p(w_1) = p(x)\) and \(uw_0 \neq e_1, e_2\). Note that \(p(u) = p(x) = p(w_1) \neq p(v) = p(y) = p(w_0)\). By Lemmas \ref{a2} or \ref{a4}, a Hamiltonian path \(P^0_{uw_0}\) exists in \(Q^0_{n-1}\) passing through $\{e_1, e_1\}$, and by Lemma \ref{a5} or Theorem \ref{dt}, a  spanning 2-path \( P^1_{w_1v} + P^1_{xy} \) of \(Q^1_{n-1}\) containing $e_1$. The desired spanning 2-path of \(Q_n\) is $P^0_{uw_0} +\{w_0w_1\} + P^1_{w_1v}$ and $P^1_{xy}$.
\end{proof}
\begin{lemma}\label{a7}
Consider \( e \in E(Q_n) \) where \( n \geq 6 \), and \( u, v, x, y, w, z \in V(Q_n) \) be pairwise distinct vertices such that \( p(u) \neq p(v) \), $p(x) \neq p(y)$,  $p(w) \neq p(z)$, and \( \{x, y, w, z\} \cap V(e) = \emptyset \). Then, (i) a spanning 3-path \( P_{uv} + P_{xy} + P_{wz} \) exists in \( Q_n \) containing \( e \). (ii) moreover, if $uv= e$ the path $P_{uv}$ can be chosen such that $P_{uv} = uv$.
\end{lemma}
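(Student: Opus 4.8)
The plan is to prove this by splitting $Q_n$ into two $(n-1)$-dimensional subcubes $Q_{n-1}^0$ and $Q_{n-1}^1$ across a suitable dimension $j \in [n]$, and then building the three required paths by distributing the six prescribed endpoints between the two halves, connecting across the splitting layer $E_j$ where needed. This mirrors the inductive/splitting strategy used in the proof of Lemma~\ref{a6}, except that now we must place three terminal-pairs and the single prescribed edge $e$, so the main work is a careful case analysis on how the vertices $u,v,x,y,w,z$ distribute between $Q_{n-1}^0$ and $Q_{n-1}^1$.

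Let me start by preserving the edge $e$: since $e$ is a single edge, I can choose the splitting dimension $j$ so that $e$ lies entirely inside one half, say $e \in E(Q_{n-1}^0)$, and so that the constraint $\{x,y,w,z\}\cap V(e)=\emptyset$ is respected. The first goal is to reduce to the already-proved two-path lemmas. The clean strategy is: arrange (possibly after relabeling and using parity) that one terminal pair, say $wz$, sits together in one subcube while the other two pairs are handled in the other. Concretely, after splitting, for each half I invoke either Havel's theorem (Theorem~\ref{h84}), Lemma~\ref{a2}, Lemma~\ref{a4} (to route through $e$ inside $Q_{n-1}^0$), or the spanning 2-path results Lemma~\ref{a5} and Lemma~\ref{a6} / Theorem~\ref{dt}, depending on how many terminals fall into that half. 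When a path must cross between the halves, I split an appropriate prescribed path $P_{ab}$ at a vertex, jump across its cube-edge $a_0a_1\in E_j$, and continue in the other subcube; the abundance of free edges, $|E(P)\setminus\{e\}|$ is exponentially large for $n\ge 6$, guarantees a crossing edge avoiding all forbidden vertices can always be selected.

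For part (ii), when $uv=e$, I want $P_{uv}=uv$ to be the single edge. Here I place $u,v$ (i.e.\ the edge $e$) inside $Q_{n-1}^0$ using $P_{uv}=uv$ as a trivial path, and then I must spanning-cover the remaining $2^{n-1}-2$ vertices of $Q_{n-1}^0$ together with all of $Q_{n-1}^1$ by the two paths $P_{xy}$ and $P_{wz}$. This is where Lemma~\ref{a1} (Hamiltonian path in $Q_{n-1}-\{u,v\}$) and the spanning 2-path machinery combine: I route one pair through the punctured subcube $Q_{n-1}^0-\{u,v\}$ and cross the remaining pair into $Q_{n-1}^1$, again using a crossing edge of $E_j$ chosen to avoid the forbidden terminals.

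The hard part will be the bookkeeping of parities together with the forbidden-incidence constraints: I need $p(u)\ne p(v)$, $p(x)\ne p(y)$, $p(w)\ne p(z)$ to be simultaneously compatible with whatever balanced 2-path or 3-path lemma I invoke inside each half, while also ensuring every crossing edge and trivial edge avoids $V(e)$ and the other fixed terminals. In particular, the subcases where all six terminals land in the same half, or where a pair is split $\{a\}\subseteq Q_{n-1}^0$, $\{b\}\subseteq Q_{n-1}^1$, force the most delicate choices, and I expect these to require the full strength of Lemma~\ref{a6} applied inside an $(n-1)$-cube (legitimate since $n-1\ge 5$). The counting inequalities needed to select crossing edges are routine for $n\ge 6$ and I would only state them, not belabor them.
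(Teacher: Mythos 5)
Your overall strategy coincides with the paper's: split $Q_n$ by a dimension $j$ with $e$ confined to one half, distribute the six terminals, build the three paths from the earlier spanning 2-path and $k$-path lemmas, pick crossing edges by counting, and handle $uv=e$ by making $P_{uv}$ the trivial edge and spanning the punctured half. However, there are two concrete problems. First, your fallback tool is wrong at the boundary dimension: you write that invoking Lemma \ref{a6} inside an $(n-1)$-cube is ``legitimate since $n-1\ge 5$,'' but Lemma \ref{a6} requires dimension at least $6$, so for $n=6$ it is unavailable in $Q_5$. The paper never needs it: since $e$ is a single edge and lies entirely in one half, every application inside a half involves at most one prescribed edge, so Lemma \ref{a5} (valid for $n\ge 5$), Theorem \ref{dt}, and Theorem \ref{a8} suffice. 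Your plan as stated would fail exactly at $n=6$, the smallest case the lemma claims.

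Second, you budget for the subcase in which all six terminals land in the same half, but you have no workable mechanism for it. Your proposed surgery (break one $Q^0$-path at an edge $s_0t_0$ and reroute through a Hamiltonian path $P^1_{s_1t_1}$ of $Q^1$) presupposes a spanning 3-path of $Q^0_{n-1}$ through $e$ with all six prescribed endpoints --- which is the statement of the lemma itself one dimension down, and you set up no induction (nor a base case) to justify it; building it instead from a spanning 2-path of $Q^0$ would require cutting two interior terminals out of existing paths, a double detour the two-path lemmas do not give you. The paper avoids this case entirely by a choice of $j$ you miss: after normalizing parities so that $p(u)=p(x)$, we have $d(u,x)\ge 2$, so at least two dimensions separate $u$ from $x$ and at most one of them is $\mathrm{dim}(e)$; hence $j$ can be chosen with $u\in V(Q^0_{n-1})$, $x\in V(Q^1_{n-1})$, and $e\notin E_j$. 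This separation is load-bearing, not cosmetic: it guarantees at most five terminals in the half containing $e$, and the single remaining hard configuration (one stray terminal interior to a path, Figure \ref{Fig.1}(a) in the paper) is exactly the one detour construction that the 2-path machinery plus Theorem \ref{a8} can absorb.
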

\begin{proof}
Given that $u, v, x, y, w, z$ are pairwise distinct vertices with $p(u) = p(x) = p(w)\neq p(v)=p(y)=p(z)$. Since \(d(u, x) \geq 2\) there exists \(j \in [n]\) such that \(u \in V(Q^0_{n-1})\), \(x \in V(Q^1_{n-1})\), and \(e \notin E_j\). If \(u \in V(e)\) or $uv = e$, then \(e \in E(Q^0_{n-1})\). Otherwise, if \(u \notin V(e)\), we can assume w.l.o.g., \(e \in E(Q^0_{n-1})\).  
	
First, consider the case where \(v, w, z \in V(Q^0_{n-1})\). When $uv = e$ then by symmetry $uv \in E(Q^0_{n-1})$, by Theorem \ref{dt} or when $uv \neq e$ and $\{w, z\}\cap V(e)= \emptyset$ by Lemma \ref{a5}, there exists a spanning 2-path \(P^0_{uv} + P^0_{wz}\) passing through $e$. If \(y \in V(Q^0_{n-1})\), let $y \in P_{wz}^0$ and \(s_0\) be the neighbor of \(y\). Vertex $s_0$ is adjacent to $s'_0$ and $r_0$ to $y$, both are on same side of \(P^0_{wz}\) near to $w, z$ both $yr_0\neq e$ and $s_0s'_0\neq e$ . On other side $y$ has neighbor $r'_0$ with $p(x)\neq p(r'_1)$ and $p(r_1)\neq p(s'_1)$. By Theorem \ref{dt}, there exists a spanning 2-path \(P^1_{r'_1x} + P^1_{s'_1r_1}\) in \(Q^1_{n-1}\). Hence, spanning 3-path of \(Q_n\) is $P^0_{uv},  P^0_{wz} + \{r_0r_1, s'_0s'_1\} + P_{r_1s'_1}^1 - P^0_{wz}[y, s_0]$ and  $P^1_{r'_1x} + \{ys_0, r'_0r'_1\} + P^0_{wz}[r'_0, s_0]$, shown in Figure \ref{Fig.1}(a). If \(y \in V(Q^1_{n-1})\),  Theorem \ref{h84}, there exists a Hamiltonian path \( P^1_{xy} \) in \(Q^1_{n-1}\). The desired spanning 3-path of \(Q_n\) is $ P^0_{uv} + P^0_{wz} + P^1_{xy}$ passing through $e$.
\begin{figure}[H]
	\centering
	\includegraphics[width=1.0\linewidth, height=0.26\textheight]{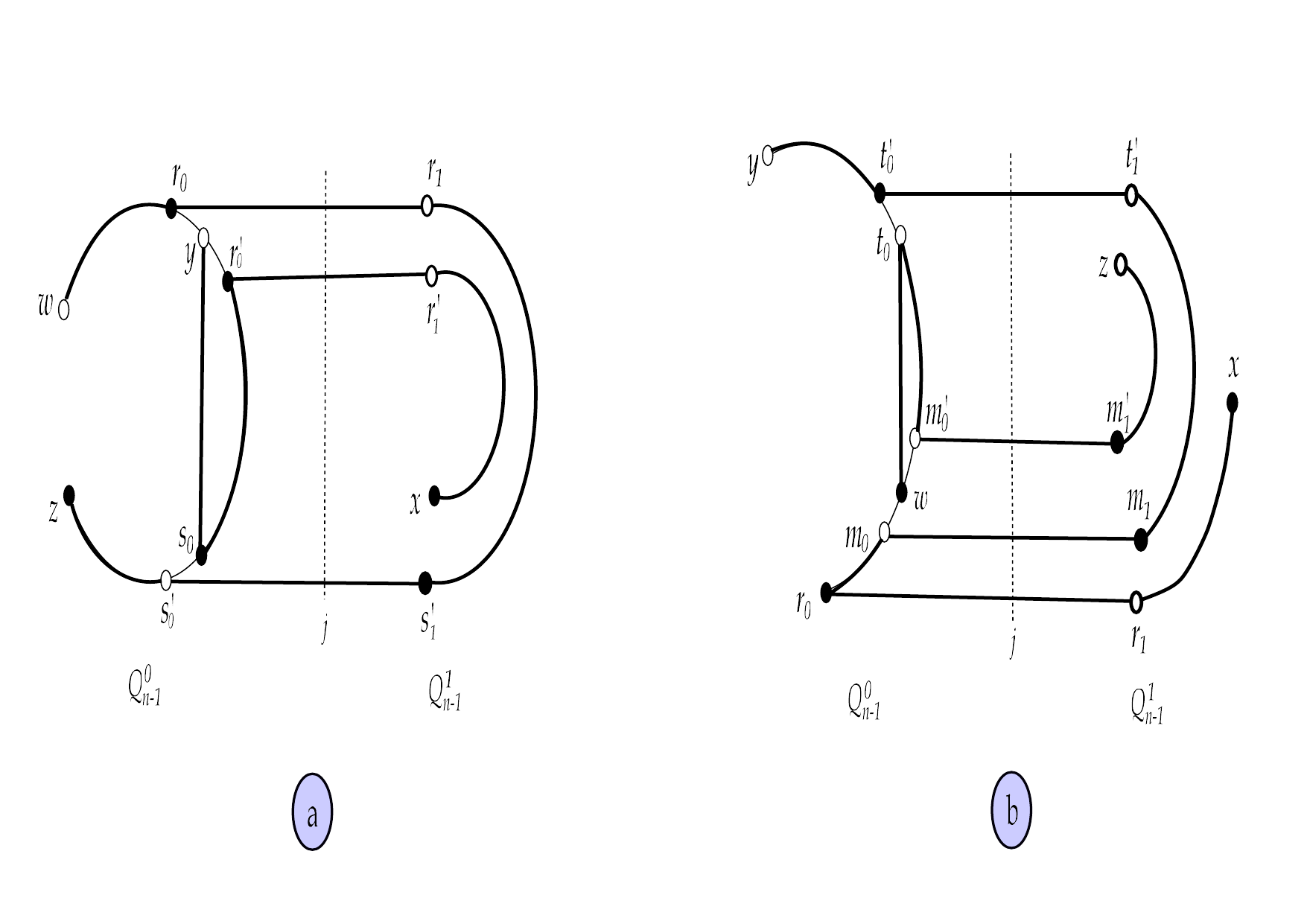}
	\caption[A]{\small Depiction of $Q_n$ into spanning 3-path extending $e$.}\label{Fig.1}
\end{figure}

Next, consider the case where \(v, z \in V(Q^1_{n-1})\) such that $uv \neq e$. Choose neighbors $s_0, r_0$ of $u$ and $y$ respectively, such that $p(r_1)\neq p(v)$ and $p(s_1)\neq p(x)$. By Lemma \ref{a5}, there exists a spanning 2-path \(P^0_{us_0} + P^0_{yr_0}\) in \(Q^0_{n-1}\) passing through \(e\). Let $w \in V(Q_{n-1}^0)$, consider $w \in P_{yr_0}^0$ and \(t_0\) be the neighbor of \(w\) such that $t_0$ is adjacent to $t'_0$ and $m_0$ to $w$, both are on same side of \(P^0_{yr_0}\) near to $y, r_0$. On other side $w$ has neighbor $m'_0$ with $p(z)\neq p(m'_1)$ and $p(m_1)\neq p(t'_1)$. Since $x, r_1, v, s_1, z, m'_1, m_1, t'_1 $ are distinct. For $n-1\geq 6$, by Theorem \ref{a8}, there exists a spanning 4-path \( P^1_{r_1x} + P^1_{s_1v} + P^1_{m'_1z} + P^1_{t'_1m_1}\) in \(Q^1_{n-1}\). Hence, spanning 3-path of \(Q_n\) is $P^0_{us_0} + s_0s_1 + P^1_{vs_1},  P^0_{yr_0} + \{m_0m_1, t'_0t'_1\} + P_{m_1t'_1}^1 - P^0_{yr_0}[w, t_0],  P^1_{m'_1z} + \{wt_0, m'_0m'_1\} + P^0_{wz}[m'_0, t_0]$ containing $e$, depicted in Figure \ref{Fig.1}(b). If \(w \in V(Q^1_{n-1})\), note that \(p(x) = p(v) = p(z) \neq p(r_1) = p(s_1) = p(w)\). By Theorem \ref{a8}, there exists a spanning 3-path \(P^1_{xr_1} + P^1_{vs_1} + P^1_{wz}\) in \(Q^1_{n-1}\). Hence,  spanning 3-path \(P^0_{us_0} + s_0s_1 + P^1_{vs_1}, P^0_{yr_0} + r_0r_1 + P^1_{xr_1}, P^1_{wz} \) of \(Q^1_{n-1}\) containing $e$.
\end{proof}
\begin{lemma}[\textup{\cite{WZ16}}]\label{wz}
Consider $M$ be a matching of $Q_n$, where $n \geq 4$, with $|M| \leq 2n - 8$, and let $ux$ and $vy$ are two disjoint edges in $E(Q_n)$ satisfying $\{u, v\} \cap V(M) = \emptyset$ and $xy \notin M$. Then, a spanning $2$-path $P_{vy} + P_{ux}$ exists of $Q_n$ that containing $M$.
\end{lemma}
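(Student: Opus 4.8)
The plan is to prove the statement by cutting $Q_n$ into two $(n-1)$-dimensional subcubes $Q_{n-1}^0\cup Q_{n-1}^1$ along a suitable dimension $j$ and, in each subcube, producing one of the two required paths via the Dvořák--Gregor theorem on Hamiltonian paths through a linear forest. The key observation is that a matching is a linear forest, so once the four vertices are distributed as $u,x\in V(Q_{n-1}^0)$ and $v,y\in V(Q_{n-1}^1)$ with $M$ not crossing the cut, Theorem \ref{n1} applied to $M^0=M\cap E(Q_{n-1}^0)$ yields a Hamiltonian $u,x$-path of $Q_{n-1}^0$ containing $M^0$, and applied to $M^1=M\cap E(Q_{n-1}^1)$ yields a Hamiltonian $v,y$-path of $Q_{n-1}^1$ containing $M^1$; their union is exactly the sought spanning $2$-path $P_{vy}+P_{ux}$ of $Q_n$ containing $M$. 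The budget is never binding here, since $|M^0|,|M^1|\le|M|\le 2n-8\le 2(n-1)-4$, and the natural necessary condition of Theorem \ref{n1} holds automatically: $M^0$ is a linear forest, $u\notin V(M^0)$ (because $\{u,v\}\cap V(M)=\emptyset$) so no edge of $M^0$ meets $u$ and no $M^0$-path joins $u$ and $x$, while $x$ meets at most one matching edge; and symmetrically in $Q_{n-1}^1$.

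The substance is therefore in selecting the cut. I would seek a dimension $j$ with three properties: (i) $u^j\neq v^j$, so that $u$ and $v$ lie on opposite sides; (ii) $j\notin\{\dim(ux),\dim(vy)\}$, so that $x$ stays with $u$ and $y$ stays with $v$; and (iii) $M\cap E_j=\emptyset$, so that no matching edge crosses. Such a $j$ lives in $\Delta(u,v)$ minus the at most $2+|D_M|$ forbidden dimensions, where $D_M=\{\dim(e):e\in M\}$; since $u\neq v$ the set $\Delta(u,v)$ is non-empty, and whenever it is not entirely swallowed by the forbidden dimensions the clean split above applies. For $n\ge 6$ the subcubes have dimension $\ge 5$ and Theorem \ref{n1} applies directly, while for $n=5$ the matching has at most two edges, so the two subcube Hamiltonian paths are produced instead by Lemmas \ref{a2} and \ref{a4} (or by Havel's Theorem \ref{h84} when a subcube matching is empty).

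When no dimension meets all of (i)--(iii)---for instance when every dimension of $\Delta(u,v)$ carries a matching edge, or when the four endpoints cannot be separated---I would fall back on an explicit gluing. There one cuts along a dimension that controls the matching (if some dimension carries two edges of $M$ then $|M^0|+|M^1|\le|M|-2\le 2(n-1)-8$, so both subcube matchings automatically drop within budget), splits each of the global paths $P_{ux}$ and $P_{vy}$ across $E_j$ at chosen crossing edges, absorbs every crossing matching edge $ab\in M\cap E_j$ as a forced transition by requiring the two subcube sub-paths to terminate at $a$ and at $b$, and assembles the subcube spanning $2$- or $3$-paths---supplied by Theorem \ref{dt} and Lemmas \ref{a5}, \ref{a6}, \ref{a7}---into the two global paths along the selected edges of $E_j$.

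The step I expect to be the main obstacle is guaranteeing that either the clean cut exists or the gluing can always be carried out. The delicate configurations are those where the endpoints resist separation or where crossing matching edges are forced: there one must choose the crossing vertices so that, simultaneously on both sides of the cut, the parity requirements, the non-incidence conditions $\{u,v\}\cap V(M)=\emptyset$ and $xy\notin M$ inherited by the subproblems, and mutual distinctness all hold, while each edge of $M\cap E_j$ is covered exactly once. Verifying that these constraints are jointly satisfiable---with a handful of small or degenerate placements of $u,x,v,y$ isolated and checked by hand---is where the bookkeeping is heaviest.
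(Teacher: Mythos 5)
You should first note that the paper does not prove this statement at all: Lemma \ref{wz} is imported verbatim from \cite{WZ16}, so your attempt can only be judged against the actual proof of Wang and Zhang, which is an induction on $n$ (split along a direction minimizing $|M\cap E_j|$, then a full case analysis of how $u,x,v,y$ and the crossing matching edges distribute — structurally the same kind of argument as the proof of Lemma \ref{2.11} in this paper). Your ``clean cut'' half is correct as far as it goes: since $ux,vy\in E(Q_n)$ the parities $p(u)\neq p(x)$, $p(v)\neq p(y)$ come for free, $\{u,v\}\cap V(M)=\emptyset$ makes the natural necessary condition of Theorem \ref{n1} automatic for a matching, and $|M^\alpha|\le 2n-8\le 2(n-1)-4$ keeps the budget slack. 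But this is the easy configuration, and the fallback you defer is not a corner case: already $d(u,v)=1$ with the unique dimension of $\Delta(u,v)$ carrying an edge of $M$ forces it, as does any instance with $\Delta(u,v)\subseteq\{\dim(e):e\in M\}$. You concede that verifying the gluing ``is where the bookkeeping is heaviest,'' and that bookkeeping is precisely the content of the lemma; as written the proposal is a plan, not a proof.

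Moreover, the fallback as sketched would fail on two concrete points. First, your budget arithmetic only closes when some dimension carries two edges of $M$; if $|M|=2n-8$ and every dimension carries at most one matching edge, then cutting along any dimension can leave $|M^0|=2n-9>2(n-1)-8$, so neither an inductive reuse of Lemma \ref{wz} nor Lemma \ref{2.11}-type statements apply in the subcube, and one is forced into Theorem \ref{n1}-strength tools plus path surgery — exactly the case analysis \cite{WZ16} carries out and you omit. Second, the finishing lemmas you name (Theorem \ref{dt}, Lemmas \ref{a5}, \ref{a6}, \ref{a7}) prescribe at most two edges, so they cannot absorb a residual subcube matching of size $\Theta(n)$; and since Lemmas \ref{a6} and \ref{a7} require $n\ge 6$, they are unavailable inside the $4$-dimensional subcubes of your $n=5$ case, whose fallback is therefore unserviced. (Minor: the base case $n=4$, where $M=\emptyset$ and Theorem \ref{dt} alone suffices with no splitting, is skipped entirely.) So there is a genuine gap: the hard half of the induction is missing, and the tools proposed for it are quantitatively too weak to complete it.
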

\begin{lemma}\label{2.11}
Consider $M$ be a matching of $Q_n$, where $n \geq 5$, with $|M| \leq 2n - 10$, and $u, x \in V(Q_n)$ such that $p(u)\neq p(x)$. Let $vy$ and $wz$ be two disjoint edges in $E(Q_n)$, satisfying $\{u, v, w\} \cap V(M) = \emptyset$ and $\{xy, yz, zx\} \notin M$. Then, a spanning $3$-path $P_{ux} + P_{vy} + P_{wz}$ exists of $Q_n$  containing $M$.
\end{lemma}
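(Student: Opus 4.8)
The plan is to split \( Q_n \) along a well-chosen coordinate into two \((n-1)\)-subcubes \( Q^0_{n-1} \) and \( Q^1_{n-1} \), distribute the matching and the six prescribed vertices across them, and then feed each subcube into the spanning-path tools already available: the spanning \(2\)-path-with-matching result (Lemma \ref{wz}) on one side and the Hamiltonian-path-containing-a-linear-forest result (Theorem \ref{n1}) on the other, since a matching is a linear forest satisfying the natural necessary condition. The base case \( n=5 \) is immediate because \( |M|\le 2n-10=0 \), so \( M=\emptyset \) and we only need a spanning \(3\)-path \( P_{ux}+P_{vy}+P_{wz} \); the endpoint set \( \{(u,x),(v,y),(w,z)\} \) is balanced (each pair has opposite parity), and \( 2\cdot 3-|\{ux,vy,wz\}\cap E(Q_5)|\le 6-2=4<5 \) because \( vy,wz\in E(Q_5) \), so Theorem \ref{a8} delivers it directly. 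Hence I would assume \( n\ge 6 \) for the remainder, which keeps every subcube at dimension \( n-1\ge 5 \), exactly the range in which Lemma \ref{wz} and Theorem \ref{n1} apply.

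For \( n\ge 6 \) I would first fix the splitting dimension \( j \) by an averaging argument: the edges of \( M \) occupy the \( n \) layers \( E_1,\dots,E_n \), and since \( |M|\le 2n-10 \) the average occupancy is \( (2n-10)/n<2 \), so some \( j \) has \( |M\cap E_j|\le 1 \); among such \( j \) I would try to also keep the edges \( vy \) and \( wz \) uncut (i.e. \( j\neq\dim(vy),\dim(wz) \)). Writing \( M_\alpha=M\cap E(Q^\alpha_{n-1}) \) for \( \alpha\in\{0,1\} \), the key point is that the size bounds survive the split: each \( |M_\alpha|\le |M|\le 2n-10=2(n-1)-8 \), which is exactly the hypothesis of Lemma \ref{wz} in \( Q_{n-1} \), and also \( |M_\alpha|\le 2n-10\le 2(n-1)-4 \), the bound required by Theorem \ref{n1}. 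Balance is automatic throughout, since each of the three endpoint pairs has opposite parity, contributing one white and one black vertex, so any spanning \(3\)-path with these endpoints is consistent with \( Q_n \) being bipartite and balanced.

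The favorable configuration is the one I would treat first: when \( j \) can be chosen so that \( v,y,w,z\in V(Q^0_{n-1}) \) (both edges uncut and on side \(0\)) while \( u,x\in V(Q^1_{n-1}) \), and \( M\cap E_j=\emptyset \). Then in \( Q^0_{n-1} \) I apply Lemma \ref{wz} to the two disjoint edges \( vy \) and \( wz \) with matching \( M_0 \): its hypotheses hold because \( \{v,w\}\cap V(M_0)\subseteq\{u,v,w\}\cap V(M)=\emptyset \) and \( yz\notin M_0 \) by the assumption \( \{xy,yz,zx\}\notin M \), giving a spanning \(2\)-path \( P_{vy}+P_{wz}\supseteq M_0 \). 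In \( Q^1_{n-1} \) I apply Theorem \ref{n1} to \( u,x \) (opposite parity) with the linear forest \( M_1 \): the natural necessary condition is satisfied since \( u\notin V(M) \) forces \( u\notin V(M_1) \), so \( ux\notin M_1 \) and \( u \) lies on no \( M_1 \)-edge, while \( x \) meets at most one, yielding a Hamiltonian path \( P_{ux}\supseteq M_1 \) of \( Q^1_{n-1} \). The three paths together span \( Q_n \) and contain \( M=M_0\cup M_1 \), as desired.

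The genuine difficulty, and where the bulk of the casework lives, is when no choice of \( j \) realizes such a clean separation of the six vertices, or when the averaging forces a single crossing matching edge \( M\cap E_j=\{e\} \). In these cases one of the three target paths must be \emph{glued} across the split through a crossing edge, and the obstacle is that Lemma \ref{wz} only outputs a \(2\)-path whose two endpoint pairs are prescribed \emph{edges}: the crossing path's subcube-endpoint (the vertex where it leaves \( Q^0 \) to enter \( Q^1 \)) must then be made adjacent to a fixed endpoint on each side simultaneously, which is not generally possible. My plan to overcome this is to spend the crossing only where the receiving subcube is matching-free, so that the flexible, non-edge endpoint pair can be handled by the matching-free spanning \(2\)-path results (Theorem \ref{dt} or Lemma \ref{a5}), while the matching-carrying subcube still meets Lemma \ref{wz}'s edge requirement via a freely chosen neighbor \( a_0\sim u \) (avoiding the at most one forbidden choice that would put \( ya_0 \) or the crossing into \( M \)); when a matching edge \( e \) is forced to cross, I would take \( e \) itself as the gluing edge so that it is automatically contained in the spanning path. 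The main obstacle is precisely this simultaneous bookkeeping: distributing \( M \) so each residual \( M_\alpha \) stays within the bound of whichever lemma is used there, keeping the two given edges \( vy,wz \) realizable as Lemma \ref{wz} inputs, and routing the one unavoidable crossing (of either the glue or the stray matching edge) consistently with the fixed endpoints \( u,x \); I expect this to require exploiting the remaining freedom in choosing \( j \) among several light dimensions and a modest number of subcases according to which of \( u,x,v,y,w,z \) land together.
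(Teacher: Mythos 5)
Your base case and your ``favorable configuration'' are both correct, and they coincide with the paper's opening moves (the paper also starts from $n=5$, $M=\emptyset$, via Theorem \ref{a8}, and also splits along a direction $j$ chosen to minimize the number of cut edges). But there is a genuine gap: everything after the favorable configuration is strategy rather than proof, and the strategy as stated cannot be completed with the tools you restrict yourself to. The positions of $u,x,v,y,w,z$ are given, not chosen, so no choice of $j$ forces your clean separation; in particular you must handle the case where $u$, $x$ and both edges $vy$, $wz$ all land in the same subcube $Q^0_{n-1}$. There your toolkit fails: Lemma \ref{wz} only prescribes endpoint pairs that are \emph{edges}, so it cannot produce the free pair $(u,x)$, and what is actually needed on side $0$ is a spanning $3$-path through a matching --- i.e., the lemma itself in dimension $n-1$. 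This is exactly why the paper proves the statement by induction on $n$ and repeatedly invokes the induction hypothesis inside the subcubes (its Subcases 1.1, 1.2.2, 2.1 and Case 3). Your sketch never sets up that induction hypothesis (you use $n\geq 6$ only to keep the subcube dimension in range for Lemma \ref{wz} and Theorem \ref{n1}), and without it the ``modest number of subcases'' you defer cannot all be closed.

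Second, your size bookkeeping hides the hardest regime. When $|M_0|\geq 2n-11$, the inductive bound $2(n-1)-10$ is unavailable on side $0$ even in the paper, and the paper's resolution there is a distinct idea absent from your plan: absorb the two prescribed edges into the matching and apply Theorem \ref{n1} to the linear forest $M_0\cup\{vy,wz\}$ (of size at most $2(n-1)-8\leq 2(n-1)-4$) to obtain a Hamiltonian path $P^0_{ux}$ containing both; deleting $vy$ and $wz$ then splits it into three paths with the correct ends, after which the endpoints are repaired by routing segments through $Q^1_{n-1}$ --- in Subcase 1.2.1 this even requires cutting $Q^1_{n-1}$ a second time into two $(n-2)$-dimensional subcubes so that two independent spanning $2$-paths can be placed on prescribed ends. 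Nothing in your plan (``spend the crossing only where the receiving subcube is matching-free'') substitutes for this absorption-and-surgery step, so the proposal, while sound as far as it goes and correctly diagnosing the edge-endpoint limitation of Lemma \ref{wz}, remains an outline with the decisive cases unproved.
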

\begin{proof} 
We will prove this lemma by induction on $n$. For the base case $n=5$ since $M= \emptyset$, the lemma holds by Theorem \ref{a8}. Assume that for $n - 1 \ \geq 5$ the lemma holds. We must show that it is true for $ n\geq 6$. Choose $j \in [n]$ with given condition $| (M \cup \{vy, wz\}) \cap E_j |$  reduced to the smallest possible size. Since $| M \cup \{vy, wz\} | \leq 2n - 8$, it follows that $| (M \cup \{vy, wz\}) \cap E_j | \leq 1$. If $| (M \cup \{vy, wz\}) \cap E_j | = 1$, given $| (M \cup \{vy, wz\}) \cap E_i | \geq 1$ for each $i \in [n]$, at least eight options exists for $j$. Hence, $j$ can be chosen with $(M \cup \{vy, wz\}) \cap E_j = M \cap E_j := \{s_0s_1\}$ and $\{x, y, z\} \cap \{s_0, s_1\} = \emptyset$. Next, decompose $Q_n$ into $Q^0_{n-1}$ and $Q^1_{n-1}$ by $E_j$. Since $\{vy, wz\} \subseteq E(Q^0_{n-1}) \cup E(Q^1_{n-1})$, and using symmetry, suppose that $vy \in E(Q^0_{n-1})$.
	
Case 1. \(wz \in E(Q^0_{n-1}) \) and $u, x \in V(Q_{n-1}^0)$.

Subcase 1.1. $|M_0| \leq 2n-12 = 2(n-1) - 10$.
	
By induction hypothesis, a spanning $3$-path \( P^0_{ux} + P^0_{vy}+ P^0_{wz} \) exists in \( Q^0_{n-1} \) passes through \( M_0 \).
	
Assume \( M_c = \{s_0s_1\} \). W.l.o.g., let \( s_0 \in V(P_{ux}) \), choose a neighbor \( r_0 \) of \( s_0 \) on \( P^0_{ux} \). Since \( M \) is a matching and \( s_0s_1 \in M \), it follows that \( \{s_0r_0, s_1r_1\} \cap M = \emptyset \). Now, if \( M_c = \emptyset \), w.l.o.g., we may assume that \( |E(P^0_{ux})| \geq |E(P^0_{vy})| \geq |E(P^0_{wz})| \). This gives \( |E(P^0_{ux})| \geq \frac{2^{n-1}}{3} - 1 \). Since \( |E(P^0_{ux}) \setminus M_0| - |M_1| \geq \frac{2^{n-1}}{3} - 1 - (2n - 10) > 3 \) for \( n \geq 6\), we can choose an edge \(s_0r_0 \in E(P^0_{ux}) \setminus M_0 \) such that \( s_1r_1 \notin M_1 \). In both cases, \( M_1 \leq 2(n-1) - 4 \) edges. Using Theorem \ref{n1}, a Hamiltonian path \( P^1_{s_1r_1} \) exists in \( Q^1_{n-1} \) that containing \( M_1 \). Thus, the spanning $3$-path of \( Q_n \) is $P_{ux} := \langle P^0_{ux} + P^1_{s_1r_1} + \{s_0s_1, r_0r_1\} - s_0r_0 \rangle, P_{vy} := P^0_{vy}$ and  $P_{wz} := P^0_{wz}$.
	
 Subcase 1.2. $|M_0| \geq 2n - 11$. Now, $|M_c| \leq 1$.
	
Subcase 1.2.1: $M_1 = \emptyset$.
	
In this case, $M_0 \cup \{vy, wz\}$ forms a linear forest containing no more than $2(n-1) - 8$ edges. Since $n-1\geq 5$, according to Theorem \ref{n1}, a Hamiltonian path $P^0_{ux}$ exists in $Q^0_{n-1}$ that containing $M_0 \cup \{vy, wz\}$. Removing $\{vy, wz\}$ from $P^0_{ux}$ splits it into three disjoint paths, $P^0_u, P^0_x$ and $P^0_y$, ending at $u, x$ and $y$, respectively. If $M_c= \{s_0s_1\}$, and $\{u, v, w, x, y, z\} \cap \{s_0, s_1\} = \emptyset$, w.l.o.g., we can assume $s_0 \in V(P^0_u)$. Let $t_0$ be a neighbor of $s_0$ on $P^0_u$. Since $s_0s_1 \in M$ and neither $\{xy, yz, zx\}$ nor $u, v, w$ are in $M$. Let $m_0r_0 \notin M_0$  on $P^0_x$, and $|P^0_{wy}| \geq |P^0_{vx}| \geq |P^0_{uz}|$ such that $(E(P^0_{ux})\setminus (M_0 + \{vy, wz\}) = 2^{n-1} - (2n-10) =30$ for $n\geq 6$, each direction has at least $5$-edges not in $M_0$. Then there exist edges $a_0b_0 \in E(P^0_y) - M_0$ in same direction as $s_0t_0$ and $c_0d_0 \in E(P^0_y) - M_0$  same direction as $m_0r_0$. If $M_c = \emptyset$, with $\{u, v, w\} \cap V(M) = \emptyset$ and $\{xy, yz, zx\} \notin M$, edges $s_0t_0 \in E(P^0_u) - M_0$, $m_0r_0 \in E(P_x) - M$ and $\{a_0b_0, c_0d_0\} \in E(P^0_y) - M_0$ exists.
	
Assume $t_0 \in V(P^0_u[z, s_0])$, $m_0 \in V(P^0_x[x, r_0])$, $d_0 \in V(P^0_y[y, c_0])$ and $b_0 \in V(P^0_y[y, a_0])$. There should be a direction $i\in [n-1]$ in $Q_{n-1}^1$ such that $\{c_1d_1, r_1m_1\} \in E(Q_{n-2}^{1L})$ and $\{s_1t_1, a_1b_1\} \in E(Q_{n-2}^{1R})$. Since $p(a_1) \neq p(b_1)$, $p(t_1) \neq p(s_1)$, $p(c_1) \neq p(d_1)$ and $p(m_1) \neq p(r_1)$, using Lemmas \ref{dt} in case whether $p(a_1) \neq p(t_1)$, $p(c_1) \neq p(m_1)$ or Lemma \ref{tt} in case $p(a_1) = p(t_1)$, $p(c_1) = p(m_1)$ there is a  spanning $2$-path $P_{a_1t_1}+ P_{s_1b_1}$ in $Q^{1R}_{n-2}$ and $P^1_{c_1m_1}+ P^1_{d_1r_1}$ in $Q^{1L}_{n-2}$. Thus, the spanning $3$-path of $Q_n$ is  $P_{ux} := \langle P^0_u[u, s_0] + P^0_x[x, m_0] + \{s_0s_1, b_0b_1, c_0c_1,m_0m_1\} + P^1_{s_1b_1} + P^1_{c_1m_1} + P^0_y- \{P^0_y[y, d_0]+ P^0_y[w, a_0]\} \rangle$, $P_{vy} := \langle P^0_y[y, d_0] + P^0_x- P^0_x[x, m_0] + \{r_0r_1, d_0d_1\}+ P^0_{r_1d_1}\rangle $ and $P_{wz} := \langle P^0_y - P^0_y[y, a_0]+ \{a_0a_1, t_0t_1\}+ P^0_u - P^0_u[u, s_0] + P^1_{a_1t_1}\rangle$.
	
Subcase 1.2.2: $|M_1| = 1$.
	
Now, $|M_0| \geq 2n - 11$ and $|M| \leq 2n - 10$ imply $|M_0| = 2n - 11$ and $M_c = \emptyset$. For $n \geq 6$, $|M_0| = 2n - 11 \geq 1$ there exists $s_0t_0 \in M_0$ such that $\{x, y, z\} \cap \{s_0, t_0\} = \emptyset$. Applying induction a spanning $3$-path $P^0_{ux}+ P^0_{vy}+P^0_{wz}$ exists in $Q^0_{n-1}$ passing through $M_0 \setminus \{s_0t_0\}$ . If $s_0t_0 \in E(P^0_{ux}) \cup E(P^0_{vy}) \cup E(P^0_{wz})$, the lemma follows, if $s_0t_0 \notin E(P^0_{ux}) \cup E(P^0_{vy}) \cup E(P^0_{wz})$ w.l.o.g., we may assume $s_0 \in V(P^0_{ux})$.
	
If $t_0 \in V(P^0_{ux})$, consider neighbors $m_0, r_0$ of $s_0, t_0$ on $P^0_{ux}$, respectively, such that exactly one of $m_0$ and $r_0$ lies on $P^0_{ux}[s_0, t_0]$ ensuring $m_1r_1 \notin M_1$. Since $\{u, x\}\cap \{s_0, t_0\}=\emptyset$ which is always possible. Since $p(m_1)\neq p(r_1)$, using Lemma \ref{a2} provides a Hamiltonian path $P^1_{m_1r_1}$ passes through $M_1$ in $Q^1_{n-1}$. The spanning $3$-path is $P_{ux} := \langle P^0_{ux} + P^1_{m_1r_1} + \{s_0t_0, s_0s_1, r_0r_1\} - \{s_0m_0, r_0t_0\}\rangle$,	$P_{vy} := P^0_{vy}$ and $P_{wz} := P^0_{wz}$.

If $t_0 \in V(P^0_{vy})$, assume $|E(P^0_{ux})| \geq |E(P^0_{vy})| \geq |E(P^0_{wz})|$, then $|E(P^0_{ux})| \geq \frac{2^{n-1}}{3} - 1$. Select $b_0 \in V(P^0_{ux}) - s_0$ with $p(b_0) = p(s_0)$ and $b_0 \notin V(M_0)$. For $n \geq 6$,  $|\{ b_0 \in V(P^0_{ux}) - s_0\} : p(b_0) = p(s_0)\}| \geq \lceil\frac{2^{n-2}}{3}\rceil  - 1 \geq (2n - 11) + 3$ ensures that there are at least three distinct options for selecting such a vertex \( b_0 \). Let \( a_0 \) denote the neighbor of \( b_0 \) such that \( a_0 \in V(P^0_{ux}[b_0, s_0]) \). Furthermore, we can select \( b_0 \) in a way that ensures \( a_1 \notin V(M_1) \). Since \( b_0 \notin V(M_0) \), it follows that \( b_0a_0 \notin M_0 \). Define \( r_0 \) as the neighbor of \( s_0 \) on \( P^0_{ux} \), where \( s_0 \) lies within \( P^0_{ux}[r_0, a_0] \). Similarly, let \( m_0 \) be a neighbor of \( t_0 \) on \( P^0_{vy} \) such that \( m_1 \notin V(M_1) \). Given that \( \{u, v, x, y\} \cap \{s_0, t_0\} = \emptyset \), this is always possible. In \( Q_{n-1}^1 \), we find that \( p(r_1) \neq p(b_1) \), \( p(m_1) \neq p(a_1) \), and \( \{s_1, a_1\} \cap V(M_1) = \emptyset \). For \( n - 1 \geq 6 \), Lemma \ref{a5} ensure the existence of a spanning $2$-path \( P^1_{r_1b_1} + P^1_{m_1a_1} \) in \( Q_{n-1}^1 \), passing through \( M_1 \). Since \( M \) is a matching and \( s_0t_0 \in M \), we conclude \( \{s_0r_0, t_0m_0\} \cap M = \emptyset \). Thus, we define the spanning $3$-path of \( Q_n \) as $P_{ux} := \langle P^0_{ux} - P^0_{ux}[r_0, b_0] + \{r_0r_1, b_0b_1\} + P^1_{r_1b_1} \rangle, P_{vy} := \langle P^0_{vy} + P^0_{ux}[s_0, a_0] + P^1_{m_1a_1} + \{s_0t_0, a_0a_1, m_0m_1\} - m_0t_0 \rangle$ and  $ P_{wz}:= P^0_{wz}$ Similarly, we can prove when $t_0\in P^0_{wz}$. 
	
Case 2. Assume $x \in V(Q_{n-1}^1)$.  
	
Subcase 2.1. $|M_0| \leq 2n - 12 = 2(n-1) - 10$.
	
Subcase 2.1.1. $|M_c| = 0$.
	
Since \( M_0 \leq 2(n - 1) - 10 \), choose $r_0 \in V(Q_{n-1}^0)$ be neighbor of $u$ such that $p(r_1)\neq p(x)$. According to induction hypothesis, a spanning 3-path \( P_{vy}^0 + P_{wz}^0 + P^0_{ur_0} \) exists in \( Q^0_{n-1} \) passing through $M_0$. Now, \( M_1 \leq \frac{2n-10}{2}< 2(n-1)-4\), using Theorem \ref{n1}, a Hamiltonian path  $P^1_{xr_1}$ exists containing \( M_1 \). Thus, $ P_{ux}:=P^0_{ur_0} + r_0r_1 + P^1_{xr_1}$, $P_{vy}^0$ and $ P^0_{wz} $ is the spanning $3$-path of \( Q_n \) containing $M$.  
	
Subcase 2.1.2. $|M_c|= 1$. Let $M_c=\{s_0s_1\}$
	
If \( M_c = \{s_0s_1\} \), given that \( \{u, v, w\} \cap V(M) = \emptyset \) and \( \{x, y, z\} \cap \{s_0, s_1\} = \emptyset \). Choose $r_0 \in V(Q_{n-1}^0)$ be neighbor of $u$ such that $p(r_1)\neq p(x)$. Since \( M_0 \leq 2(n - 1) - 10 \) by symmetry, assume \( |M_0| \geq |M_1| \). By induction hypothesis, a spanning $3$-path \( P^0_{ur_0}+P^0_{vy}+ P^0_{wz} \) of \( Q^0_{n-1} \) exists that contains \( M_1 \). Let $s_0 \in V(P^0_{ur_0})$ choose $t_0$ adjacent to $s_0$ on $P^0_{ur_0}$ such that $s_1t_1 \in Q_{n-1}^1$ and $|E(P^0_{wz})| \geq |E(P^0_{vy})| \geq |E(P^0_{ur_0})|$, then $|E(P^0_{wz})| \geq \frac{2^{n-1}}{3} - 2- 2n - 10 > 3$, there must be $m_0n_0 \in E(P^0_{wz})$ such $\{m_0n_0, m_1n_1\}\cap M = \emptyset$. Now, \( |M_1| \leq 2(n-1) - 10\) and  \( \{s_1, r_1, n_1\} \cap V(M_1) = \emptyset \), by induction hypothesis, a spanning 3-path $P_{s_1t_1}^1 + P^1_{xr_1} + P^1_{m_1n_1}$ exists passing through \( M_1 \) in \( Q^1_{n-1} \). Thus, $P_{ux} := P^0_{ur_0} + P^1_{s_1t_1} + P^1_{xr_1} + \{s_0s_1, t_0t_1, r_0r_1\} - s_0t_0,  P_{vy} := P^0_{vy}$ and  $P_{wz} := P^1_{wz}+\{m_0m_1, n_0n_1\}+P^1_{m_1n_1}-m_0n_1$ is the required spanning $3$-path of \( Q_n \) passing through $M$.

 Subcase 2.2. $|M_0| \geq 2n - 11$. Now, $|M_c| \leq 1$.

Since \( |M_0| \geq 2n  - 11 \), we have $|M|=2n-10$. Given that $vy, wz \in Q_{n-1}^0$ and $x \in V(Q_{n-1}^1)$, we have $|M_0|\leq 2(n-1)-8$ and $\{v, w\}\cap V(M_1)=\emptyset$, by Lemma \ref{wz}, a spanning 2-path \( P_{vy}^0 + P_{wz}^0 \) exists in \( Q^0_{n-1} \) passing through $M_0$. If $|M_c| = \emptyset$ and \(u \in V(Q^0_{n-1})\). Let $u \in P_{wz}^0$ and \(s_0\) be the neighbor of \(u\) such that $s_0$ is adjacent to $s'_0$ and $r_0$ to $u$ on $P_{wz}^0$. Both are on same side of \(P^0_{wz}\) near to $w, z$ and $s_0s'_0\notin M_0$. On other side $u$ has neighbor $r'_0$ with $p(x)\neq p(r'_1)$ and $p(r_1)\neq p(s'_1)$. Now, \( |M_1| \leq 2\) and $\{r'_1, s'_1\}\cap M_1= \emptyset$, using Lemma \ref{a6}, a spanning 2-path  $P^1_{xr'_1} + P^1_{r_1s'_1}$ exists containing \( M_1 \). Thus, $P_{ux}:= P_{wz}[u, s_0] +P^0_{xr'_1} + \{r'_0r'_1, us_0\}$, $P_{vy}^0$ and $P_{wz}:=P^1_{r_1s'_1} +\{r_0r_1, s'_0r'_1\} + P_{wz}^0- P^0_{wz}[u, s_0]$ is the spanning $3$-path of \( Q_n \) containing $M$.  

If $|M_c| = 1$, let $M_c=\{m_0m_1\}$ and $m_0\in V(P_{vy}^0)$, choose a neighbor $n_0$ on $P_{vy}^0$. Since \(u \in V(Q^0_{n-1})\), let $u \in P_{wz}^0$ and \(s_0\) be the neighbor of \(u\) such that $s_0$ is adjacent to $s'_0$ and $r_0$ to $u$ on $P_{wz}^0$. Both are on same side of \(P^0_{wz}\) near to $w, z$ and $s_0s'_0\notin M_0$. On other side $u$ has neighbor $r'_0$ with $p(x)\neq p(r'_1)$ and $p(r_1)\neq p(s'_1)$. Now, \( M_1| \leq 1\) and $\{r_1, s'_1, m_1, n_1\}\cap M_1= \emptyset$, using Lemma \ref{a7}, a spanning 3-path  $P^1_{xr'_1} + P^1_{r_1s'_1} + P_{m_1n_1}^1$ exists containing \( M_1 \). Thus, $P_{ux}:= P_{wz}[u, s_0] +P^0_{xr'_1} + \{r'_0r'_1, us_0\}$, $P_{vy}:= P_{vy}^0+ \{m_0m_1, n_0n_1\} + P_{m_1n_1}^1 - m_0n_0$ and $P_{wz}:= P_{wz}^0- P^0_{wz}[u, s_0] + P^1_{r_1s'_1} +\{r_0r_1, s'_0r'_1\}$ is the spanning $3$-path of \( Q_n \) containing $M$.  
	
Case 3: Assume \( wz \in E(Q^1_{n-1}) \) and $x, u \in V(Q_{n-1}^1)$.  
	
Since \( M_0 \) is no more than \( 2(n - 1) - 8 \) edges. According to Theorem \ref{n1}, a Hamiltonian path \( P_{vy}^0\) exists in \( Q^0_{n-1} \) passing through $M_0$. Choose $s_0r_0 \in E(P^0_{vy})$ or $s_0s_1 \in M_c$ such that $\{s_0r_0, s_1r_1\}\cap M=\emptyset$. Since \( \{u, s_1, w\} \cap V(M) = \emptyset \) with \( M_1 \leq  2(n-1)-10\) and $p(u)\neq p(x)$, by induction hypothesis, a spanning $3$-path  $P^1_{wz} + P^1_{s_1r_1}+ P_{ux}^1$ exists containing \( M_1 \). Thus, $ P_{vy}:= P_{s_1r_1}^1 + P^0_{vy} + \{s_0s_1, r_0r_1\}- s_0r_0$, $P_{ux}:= P^1_{ux}$ and $P_{wz}:= P^1_{wz} $ is the spanning $3$-path of \( Q_n \) passing through $M$.  
	
Subcase 3.1:  Assume $u \in V(Q_{n-1}^0)$.  
	
If \( M_c = \{s_0s_1\} \), given that \( \{u, v, w\} \cap V(M) = \emptyset \) and \( \{x, y, z\} \cap \{s_0, s_1\} = \emptyset \), we have \( \{u, v, w, x, y, z\} \cap \{s_0, s_1\} = \emptyset \). By symmetry, assume \( |M_0| \geq |M_1| \). Let  $u \in V(Q_{n-1}^0)$ and $vy \in E(Q_{n-1}^0)$. Choose $t_0$ neighbor of $u$ such that $p(x)\neq p(t_1)$. Since \( M_0\leq 2(n - 1) - 8 \) edges. According to Lemma \ref{wz}, a spanning 2-path \(P^0_{ut_0}+ P_{vy}^0\) exists in \( Q^0_{n-1} \) containing $M_0$.  Assume $|E(P^0_{vy})| \geq |E(P^0_{ut_0})|$, then $|E(P^0_{vy})| \geq 2^{n-2} - 1$, choose $s_0r_0 \in E(P^0_{vy})$ or $s_0s_1 \in M_c$ such that $\{s_0r_0, s_1r_1\}\cap M=\emptyset$. Since \( M_1 \leq \frac{2n-10}{2}< 2(n-1)-10\) and $p(t_1)\neq p(x)$, by induction hypothesis, a spanning $3$-path  $P^1_{wz} + P^1_{s_1r_1}+ P_{xt_1}^1$ exists containing \( M_1 \). Thus, $ P_{vy}:= P_{s_1r_1}^1 + P^0_{vy} + \{s_0s_1, r_0r_1\}- s_0r_1$, $P_{ux}:= P^0_{ut_0} + t_0t_1 + P^1_{xt_1}$ and $P_{wz}:= P^1_{wz} $ is the spanning $3$-path of \( Q_n \) passing through $M$.  	
\end{proof}
\section{Proof of Theorem \ref{main}}\label{sec3}
If \( M = \emptyset \), the result follows directly from Theorem \ref{h84}. For the remaining discussion, we consider the case when \( M \neq \emptyset \). We prove the given result by induction on \( n \). Since \( 3n - 13 \leq 2n-4 \) for \( n \leq 9 \), by Theorem \ref{n1} result holds. Now, assume the theorem holds for \( n - 1 \) where \( n - 1 \geq 9 \). We need to show that this result holds for \( n \geq 10 \).

Suppose $M$ is matching in the hypercube $Q_n$. Choose a direction $j\in [n]$, with $|M\cap E_j|$ is small as possible. Since $|M|\leq 3n-13$, there exists $j$ with $|M\cap E_j|\leq 2$. Split $Q_n - E_j$ into subcubes $Q_{n-1}^\alpha$ and their respective matchings $M_\alpha$ where $\alpha\in \{0, 1\}$ in these subcubes. Let denote $M\cap E_j = M_c$, when $|M_c| = 1$, we denote $M_c= \{u_0u_1\}$ and $|M_c| = 2$, denote $M_c=\{u_0u_1, v_0v_1\}$. Note that every vertex $s_{\alpha} \in V(Q_{n-1}^{\alpha})$ has a distinct adjacent vertex $s_{1-\alpha}$ within $Q_{n-1}^{1-\alpha}$, where $ \alpha =\{0, 1\}$. By symmetry, assume \( |M_0| \geq |M_1| \) which implies \( |M_1| \leq |M_0| \leq 3n - 13 \).

Now, consider \( x \) and \( y \) where \( p(x) \neq p(y) \) are any two vertices in \( Q_n \). We are to show that a Hamiltonian path \( P_{xy} \) exists in \( Q_n \). To proceed, we examine four distinct cases.

Case 1. $|M_0| \leq 3n - 16$. Now, \( |M_1| \leq |M_0| \leq 3(n-1) - 13 \). $|M_c| \leq 2$.

Subcase 1.1. \( x, y \in V(Q_{n-1}^0) \) or \( (x, y \in V(Q_{n-1}^1)) \)

By induction a Hamiltonian path \( P^0_{xy} \) exists in \( Q_{n-1}^0 \) passing through $M_0$. If \( M_c = \{u_0u_1\} \), choose a neighbor \( v_0 \) of \( u_0 \) on \(  P^0_{xy} \). Since \( M \) is a matching and \( u_0u_1 \in M_c \), it follows that \( \{u_0v_0, u_1v_1\} \cap M = \emptyset \). If \( M_c = \emptyset \), we have $|E( P^0_{xy})| - (|M_0| + |M_1|) \geq 2^{n-1} -1- (3n - 13) > 1$ for  $n \geq 10$, then an edge \( u_0v_0 \in E( P^0_{xy}) \setminus M_0 \) exists with \( u_1v_1 \notin M_1 \). When \( M_c = \{u_0u_1, v_0v_1\} \) and \( d_{ P^0_{xy}}(u_0, v_0) = 1 \), it follows that \( u_0v_0 \in E( P^0_{xy}) \setminus M_0 \) and \( u_1v_1 \notin M_1 \). In both cases, $|M_1| \leq \frac{3n - 13}{2}< 2n-4$ for $n \geq 10$. By Theorem \ref{n1}, a Hamiltonian path \(  P^0_{u_1v_1} \) exists in \( Q^1_{n-1} \) that containing \( M_1  \). The required Hamiltonian path is $ P_{xy}:= P^0_{xy} +  P^0_{u_1v_1} + \{u_0u_1, v_0v_1\} - u_0v_0$, of \( Q_n \) passing through $M$.

If \( M_c = \{u_0u_1, v_0v_1\} \) and \( d_{ P^0_{xy}}(u_0, v_0) > 1 \), let \( s_0 \) and \( r_0 \) be the respective neighbors of \( u_0 \) and \( v_0 \) on \( P^0_{xy} \), such that $s_0 \neq r_0$ and \( s_1r_1 \notin M_1 \). Given that \( \{s_1, r_1\} \cap V(M_1) = \emptyset \) and $|M_1| \leq \frac{3n-13}{2} \leq 2(n-1)- 8$ for $n \geq 10$, it follows that \( \{u_0s_0, v_0r_0\} \in E(P^0_{xy}) \setminus M_0 \) and \( \{u_1s_1,v_1r_1\} \notin M_1 \). Using Lemma \ref{wz}, a spanning 2-path  \( P^1_{u_1s_1}+ P^1_{v_1r_1} \) exists in \( Q^1_{n-1} \) containing \( M_1 \). Thus, the desired Hamiltonian path is  $ P_{xy}:= P^0_{xy} + P^1_{u_1s_1} + P^1_{v_1r_1} + \{u_0u_1, s_0s_1, v_0v_1, r_0r_1\}  - \{u_0s_0, v_0r_0\}$, containing $M$ in \( Q_n \).

Subcase 1.2. \( x \in V(Q_{n-1}^0)\) and \( y \in V(Q_{n-1}^1)  \) or (\( y \in V(Q_{n-1}^0)\) and \( x \in V(Q_{n-1}^1)\)).

Select a vertex \( r_0 \in V(Q_{n-1}^0) \) with \( p(r_0) \neq p(x) \) and $ yr_1, xr_0 \notin M_0$. Moreover, it holds that \( p(y) \neq p(r_1) \) and $r_1\notin V(M_1)$. By induction hypothesis, a Hamiltonian path \( P^0_{xr_0} \) exists in \( Q_{n-1}^0 \) containing $M_1$. If \( M_c = \emptyset \), and we have $|M_1| \leq \frac{3n - 13}{2}< 2(n-1)-4$ for $n \geq 10$. By Theorem \ref{n1}, there exists a Hamiltonian path $P^1_{yr_1}$ in $Q_{n-1}^1$ passing through $M_1$. Thus, the desired Hamiltonian path is  $P_{xy}:= P^0_{xr_0} + P^1_{yr_1} +  r_0r_1$ in \( Q_n \) passing through $M$.

Since $|M_c|\leq 2$. We choose the edges on $P_{xr_0}^0$ according to the following rules.

(i) If \( |M_c| = 1 \), let denote \( M_c = \{u_0u_1\} \), choose a neighbor \( s_0 \) of \( u_0 \). We have $|E( P^0_{xr_0})| - (|M_0| + |M_1|) \geq 2^{n-1} -1- (3n - 13) > 2$ for  $n \geq 10$, then there exists an edge $v_0t_0$ on \(  P^0_{xr_0} \) such that $\{u_0s_0, u_1s_1, v_0t_0, v_1t_1\} \cap M = \emptyset$ satisfying $\{u_1, v_1, r_1\}\cap V(M_1)=\emptyset$, and \( \{s_1y, t_1y, s_1t_1\} \notin M_1 \).

(ii) Otherwise, \( |M_c| = 2 \), let denote $ M_c = \{u_0u_1, v_0v_1\}$ and \( d_{ P^0_{xr_0}}(u_0, v_0) > 1 \). Let \( s_0 \) and \( t_0 \) be the respective neighbors of \( u_0 \) and \( v_0 \) on \( P^0_{xr_0} \), such that $s_0 \neq t_0$ and \( s_1t_1 \notin M_1 \). When \( M_c = 2\), let denote $ M_c = \{u_0u_1, s_0s_1\}$ and \( d_{ P^0_{xy}}(u_0, s_0) = 1 \), it follows that \( u_0s_0 \in E( P^0_{xy}) \setminus M_0 \) and \( u_1s_1 \notin M_1 \). Note that $|E( P^0_{xy})| - (|M_0| + |M_1|) \geq 2^{n-1} -1- (3n - 13) > 2$ for  $n \geq 10$, then there exists an edge $v_0t_0$ on \(  P^0_{xr_0} \) such that $\{u_0s_0, u_1s_1, v_0t_0, v_1t_1\} \cap M = \emptyset$ satisfying $\{u_1, v_1, r_1\}\cap V(M_1)=\emptyset$, and \( \{s_1y, t_1y, s_1t_1\} \notin M_1 \).

Since $|M_1| \leq \frac{3n-14}{2} \leq 2(n-1)- 10$ for $n \geq 10$, it follows that \( \{u_0s_0, v_0t_0\} \in E(P^0_{xr_0}) \setminus M_0 \) and $p(y)\neq p(r_1)$. By Lemma \ref{2.11}, a spanning 3-path \( P^1_{u_1s_1}+ P^1_{v_1t_1} + P_{yr_1}^1 \) exists in \( Q^1_{n-1} \) containing \( M_1 \). Thus, the required Hamiltonian path is $P_{xy}:= P^0_{xr_0} + P^1_{u_1s_1} + P_{v_0t_1}^1 + P^1_{yr_1} + \{u_0u_1, v_0v_1, s_0s_1, r_0r_1, t_0t_1\}  - \{u_0s_0, v_0t_0\}$ passing through $M$ in \( Q_n \).
	
Case 2. $|M_0| = 3n - 15$. Now, \( |M_1| \leq 2 \). 

Subcase 2.1. If $x, y \in V(Q_{n-1}^0)$.

Since $|M_0| = 3n - 15$, choose an edge $e \in M_0$ then $|M_0 \setminus \{e\}| = 3n - 16 = 3(n - 1) - 13$, applying induction hypothesis, a Hamiltonian path $P^0_{xy}$ exists in $Q_{n-1}^0$ passing through $(M_0 \setminus \{e\})$. 

If \( M_c = \emptyset \), then \( |M_1| \leq 2 \). When \( |M_c| \geq 1  \), let $s_0r_0$ be the chosen edge with $d(u_0, s_0r_0)\neq 1$ such that $s_0r_0\notin P_{xy}^0$. Choose \( a_0 \) and \( b_0 \) be the neighbors of \( s_0 \) and \( r_0 \) on \( P^0_{xy}\), respectively, such that the path between \(s_0 \) and \( r_0 \) on \( P^0_{xy} \) passes through \( a_0 \) and \( b_0 \) is on other side. If \( a_1b_1 \notin M_1 \), and since \( p(a_1) \neq p(b_1) \),  by Lemma \ref{a4}, there exists a Hamiltonian path \( P^1_{a_1b_1} \) containing \( M_1 \) in \( Q^1_{n-1} \). The desired Hamiltonian path is $P_{xy}:= P^0_{xy} + P^1_{a_1b_1} + \{a_0a_1, b_0b_1, s_0r_0\} - \{s_0a_0, r_0b_0\}$ in \( Q_n \) containing $M$. If, however, \( a_1b_1 \in M_1 \), then since $|E(P^0_{xy}) \setminus M_0| - |M_1| \geq 2^{n - 1} -1- (3n - 13) > 5$, an edge \( w_0t_0 \in E(P^0_{xy}) \setminus M_0 \) exists with \( \{w_0, t_0\} \cap \{a_0, b_0\} = \emptyset \) and \( w_1t_1 \notin M_1 \). By Lemma \ref{a1} if \( |M_1| = 2 \), or Lemma \ref{dt} if \( |M_1| = 1 \), there is a Hamiltonian path \( P^1_{w_1t_1} \) passing through \( M_1 \setminus \{a_1b_1\} \) in \( Q^1_{n-1} - \{a_1, b_1\} \). The desired Hamiltonian path is $P_{xy}:= P^0_{xy} + P^1_{w_1t_1} + \{s_0r_0, a_0a_1, b_0b_1, w_0w_1, t_0t_1\} - \{s_0a_0, r_0b_0, w_0t_0\}$, in \( Q_n \) passing through $M$, depicted in Figure \ref{Fig.4}(a).
\begin{figure}[H]
	\centering
	\includegraphics[width=0.95\linewidth, height=0.26\textheight]{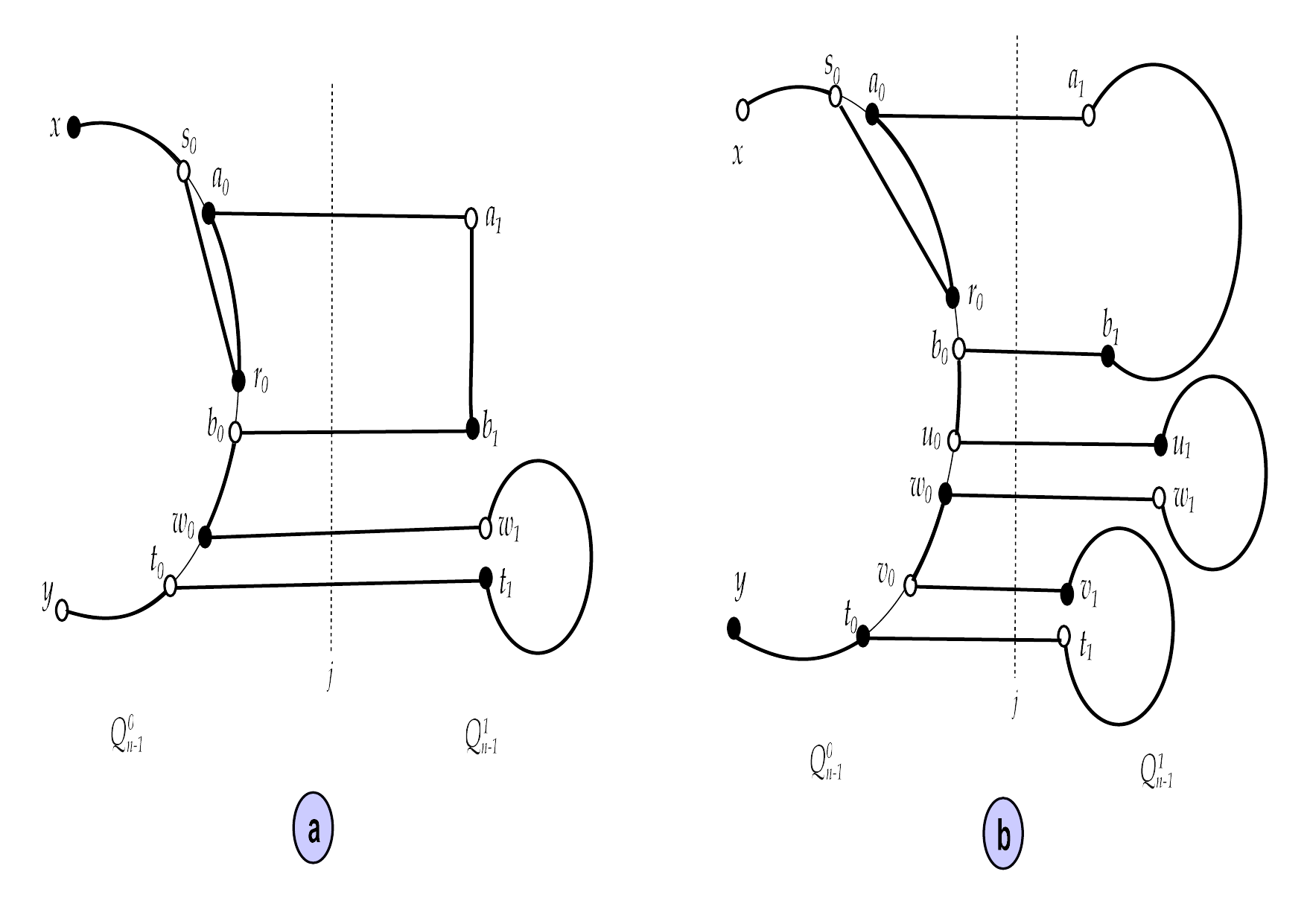}
	\caption[A]{\small Illustration of subcase 2.1.}\label{Fig.4}
\end{figure}

If \( M_c = \{u_0u_1\} \), then \( |M_1| \leq 1 \). Choose a neighbor \( v_0 \) of \( u_0 \) on \( P^0_{xy} \) such that \( v_1 \notin V(M_1) \). This ensures that \( \{u_1, v_1\} \cap V(M_1) = \emptyset \). If \( M_c = \{u_0u_1, v_0v_1\} \) and \( d_{P^0_{xy}}(u_0, v_0) = 1 \), it follows that \( M_1 = \emptyset \). In both of these cases, since \( u_0 \) is not adjacent to \( s_0 \) or \( r_0 \) in \( P^0_{xy} \), we can select neighbors \( a_0 \) and \( b_0 \) of \( s_0 \) and \( r_0 \) on \( P^0_{xy} \), respectively, such that the path between \( s_0 \) and \( r_0 \) on \( P^0_{xy} \) contains \( a_0 \) and \( b_0 \) is on other side. We have \( \{a_0, b_0\} \cap \{u_0, v_0\} = \emptyset \). Since \( p(a_1) \neq p(b_1) \) and \( p(u_1) \neq p(v_1) \) such that $\{u_1, v_1\}\cap M_1= \emptyset$, by applying Lemma \ref{a5} when \( M_1 = 1 \), or Lemma \ref{dt} when \( M_1 = \emptyset \), a spanning 2-path \( P^1_{a_1b_1}+ P^1_{u_1v_1} \) of \( Q_{n-1}^1 \) that passes through \( M_1 \). The desired Hamiltonian path is $P_{xy}:= P^0_{xy} + P^1_{a_1b_1} + P^1_{u_1v_1} + \{s_0r_0, a_0a_1, b_0b_1, u_0u_1, v_0v_1\} - \{s_0a_0, r_0b_0, u_0v_0\}$, in \( Q_n \) passing through $M$.

If \( M_c = \{ u_0u_1, v_0v_1 \} \) and \( d_{P_{xy}^0}(u_0, v_0) > 1 \), then \( M_1 = \emptyset \). Choose neighbors \( a_0 \) and \( b_0 \) of \( s_0 \) and \( r_0 \) on \( P^0_{xy} \), respectively, such that the path between \( s_0 \) and \( r_0 \) passes through \( a_0 \) and \( b_0 \) on other side. Observe that there are two possible ways to select \( p(s_0) \neq p(r_0) \), the neighbors of \( s_0 \) and \( r_0 \) on \( P^0_{xy} \) are disjoint. Therefore, we can select \( a_0 \) and \( b_0 \) such that \( v_0 \notin \{ a_0, b_0 \} \). Since \( u_0 \) is not adjacent to \( s_0 \) or \( r_0 \) on \( P^0_{xy} \), we have \( \{u_0, v_0\} \cap \{a_0, b_0\} = \emptyset \). Next, choose neighbors \( w_0 \) and \( t_0 \) of \( u_0 \) and \( v_0 \) on \( P^0_{xy} \), respectively, such that \( a_0 \neq b_0 \) and \( \{w_0, t_0\} \cap \{a_0, b_0\} = \emptyset \). Since \( d_{P^0_{xy}}(u_0, v_0) > 1 \), this is always possible. Now, \( a_0, b_0, u_0, v_0, w_0, t_0 \) are distinct vertices, and we have \( p(u_1) \neq p(w_1) \), \( p(v_1) \neq p(t_1) \), and \( p(a_1) \neq p(b_1) \). By Theorem \ref{a8}, a spanning 3-path  \(  P^1_{u_1w_1}+P^1_{v_1t_1}+P^1_{a_1b_1} \) exists in \( Q_{n-1}^1 \). Hence, the required Hamiltonian path is $P_{xy}:= P^0_{xy} + P^1_{u_1w_1}+P^1_{v_1t_1}+P^1_{a_1b_1} + \{ s_0r_0, u_0u_1, v_0v_1, a_0a_1, b_0b_1, w_0w_1, t_0t_1 \} - \{ s_0a_0, r_0b_0, u_0w_0, v_0t_0 \}$, in \( Q_n \) passing through $M$, shown in Figure \ref{Fig.4}(b).

Subcase 2.2. $x \in V(Q_{n-1}^0)$ and $y \in V(Q_{n-1}^1)$.

Choose $z_0 \in V(Q_{n-1}^0)$ and $z_0x\notin M_0$ with $p(z_0) \neq p(x)$ there exists $p(z_1) \neq p(y)$, such that the vertices $z_1, y$ are distinct and $z_1y\notin M$. Since $|M_0 \setminus \{s_0r_0\}| = 3(n - 1) - 13$, by induction a Hamiltonian path $P^0_{xz_0}$ exists in $Q_{n-1}^0$ passing through $(M_0 \setminus \{s_0r_0\})$. If \( M_c = \emptyset \), then $|M_1| \leq 2$, and if $s_0r_0 \in E(P^0_{xz_0})$, by Lemma \ref{a4}, a Hamiltonian path $P^1_{z_1y}$ exists in $Q_{n-1}^1$ containing $M_1$. Thus, the required Hamiltonian path is $P_{xy}:= P^0_{xz_0} + z_0z_1 + P^1_{z_1y}$ in $Q_n$ passing through $M$. 

When \( |M_c| \geq 1  \), let $s_0r_0$ be chosen edge with $d(u_0, s_0r_0)\neq 1$ and $s_0r_0\notin E(P_{xz_0}^0)$ where \( a_0 \) and \( b_0 \) be the neighbors of \( s_0 \) and \( r_0 \) on \( P^0_{xz_0}\), respectively, such that the path between \(s_0 \) and \( r_0 \) on \( P^0_{xz_0} \) passing through \( a_0 \) and \( b_0 \) is on other side, such that \( a_1b_1 \notin M_1 \). Since \( p(a_1) \neq p(b_1) \), and \( \{a_1, z_1\} \cap M_1 = \emptyset \)  by Lemma \ref{a6}, there is a spanning 2-path \( P^1_{a_1b_1} + P_{yz_1}^1\) exists passing through \( M_1 \) in \( Q^1_{n-1} \). The desired Hamiltonian path is $P_{xy}:= P^0_{xz_0} + P^1_{a_1b_1} + P_{yz_1}^1 + \{a_0a_1, b_0b_1, s_0r_0, z_0z_1\} - \{s_0a_0, r_0b_0\}$, in \( Q_n \)containing $M$. If, however, \( a_1b_1 \in M_1 \), by Lemma \ref{a1} if \( |M_1| = 2 \), or Lemma \ref{dt} if \( |M_1| = 1 \), there is a Hamiltonian path \( P^1_{yz_1} \) passing through \( M_1 \setminus \{a_1b_1\} \) in \( Q^1_{n-1} - \{a_1, b_1\} \). The desired Hamiltonian path is $P_{xy}:=P^0_{xz_0} + P^1_{yz_1}+\{s_0r_0, a_0a_1, b_0b_1, z_0z_1\} - \{s_0a_0, r_0b_0\}$ in \( Q_n \) passing through $M$, illustrated in Figure \ref{Fig.3}(a).
\begin{figure}[H]
	\centering
	\includegraphics[width=0.95\linewidth, height=0.26\textheight]{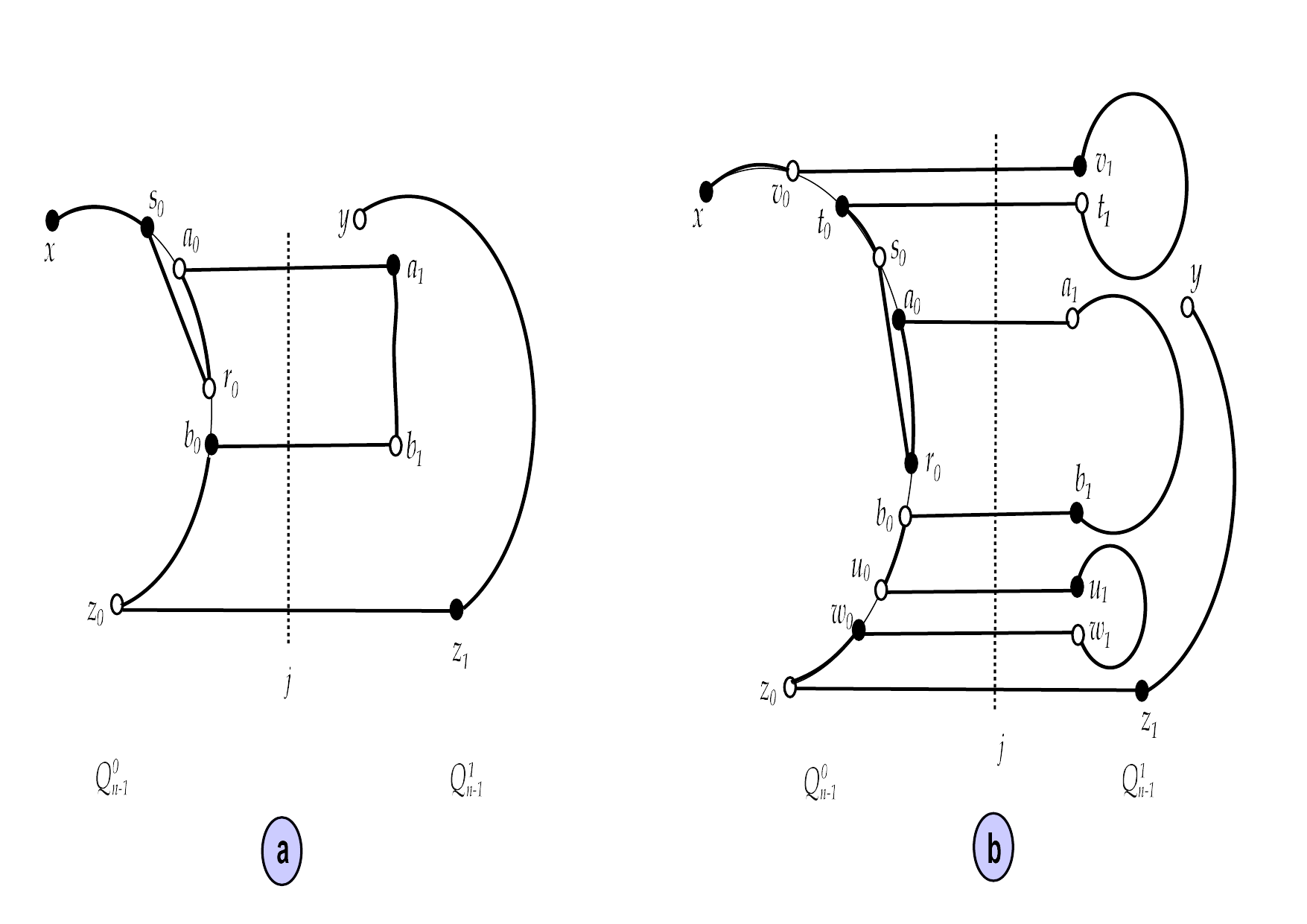}
	\caption[A]{Illustration of subcase 2.2.}\label{Fig.3}
\end{figure}

If \( M_c = \{u_0u_1\} \), then \( |M_1| \leq 1 \) and  $a_1b_1 \notin M_1$. Select a neighbor \( v_0 \) of \( u_0 \) in \( P^0_{xz_0} \) with \( v_1 \notin V(M_1) \). This ensures that \( \{u_1, v_1\} \cap V(M_1) = \emptyset \). If \( M_c = \{u_0u_1, v_0v_1\} \) and \( d_{ P^0_{xz_0}}(u_0, v_0) = 1 \), it follows that \( M_1 = \emptyset \). In both of these cases, since \( u_0 \) is not adjacent to \( s_0 \) or \( r_0 \) in \( P^0_{xy} \), we can select neighbors \( a_0 \) and \( b_0 \) of \( s_0 \) and \( r_0 \) on \( P^0_{xz_0} \), respectively, such that the path between \( s_0 \) and \( r_0 \) on \( P^0_{xz_0} \) contains \( a_0 \) and \( b_0 \) is on other side, such that \( \{a_0, b_0\} \cap \{u_0, v_0\} = \emptyset \). Since \( p(a_1) \neq p(b_1) \), \( p(u_1) \neq p(v_1) \) and \( \{a_1, b_1, u_1, v_1\} \cap M_1 = \emptyset \), by applying Lemma \ref{a7} when \( M_1 = 1 \), or Lemma \ref{a8} when \( M_1 = \emptyset \), a spanning 3-path \( P^1_{a_1b_1}+ P^1_{u_1v_1} + P^1_{yz_1} \) of \( Q_{n-1}^1 \) passes through \( M_1 \). The desired Hamiltonian path is  $P_{xy}:= P^0_{xz_0} +P^1_{a_1b_1}+ P^1_{u_1v_1} + P^1_{yz_1} + \{s_0r_0, a_0a_1, b_0b_1, u_0u_1, v_0v_1, z_0z_1\} - \{s_0a_0, r_0b_0, u_0v_0\}$ in \( Q_n \) passing through $M$. If  $a_1b_1 \in M_1$ we have \( \{z_1, y, u_1, v_1\} \cap M_1 = \emptyset \), Lemma \ref{a7} a spanning 3-path \( P^1_{u_1v_1} + P^1_{yz_1} + a_1b_1 \) of \( Q_{n-1}^1 \) containing \( M_1 \). The required Hamiltonian path is  $P_{xy}:= P^0_{xz_0} + P^1_{u_1v_1} + P^1_{yz_1} + \{a_1b_1, s_0r_0, a_0a_1, b_0b_1, u_0u_1, v_0v_1, z_0z_1\} - \{s_0a_0, r_0b_0, u_0v_0\}$ in \( Q_n \) containing $M$.

If \( M_c = \{ u_0u_1, v_0v_1 \} \) and \( d_{ P^0_{xz_0}}(u_0, v_0) > 1 \), then \( M_1 = \emptyset \). Choose neighbors \( a_0 \) and \( b_0 \) of \( s_0 \) and \( r_0 \) on \( P^0_{xz_0} \), respectively, such that the path joining \( s_0 \) and \( r_0 \) passes through \( a_0 \) and  \( b_0 \) is on other side. Observe that there are two possible ways to select \( a_0 \) and \( b_0 \). Since \( Q_n \) is bipartite and \( p(s_0) \neq p(r_0) \), the neighbors of \( s_0 \) and \( r_0 \) on \( P^0_{xz_0} \) are disjoint. Therefore, we can select \( a_0 \) and \( b_0 \) such that \( v_0 \notin \{ a_0, b_0 \} \). Since \( u_0 \) is not adjacent to \( s_0 \) or \( r_0 \) on \( P^0_{xz_0} \), we have \( \{u_0, v_0\} \cap \{a_0, b_0\} = \emptyset \). Next, choose neighbors \( w_0 \) and \( t_0 \) of \( u_0 \) and \( v_0 \) on \( P^0_{xz_0} \), respectively, such that \( w_0 \neq t_0 \) and \( \{w_0, t_0\} \cap \{a_0, b_0\} = \emptyset \). Since \( d_{P^0_{xz_0}}(u_0, v_0) > 1 \), this is always possible. Now, \( a_1, b_1, u_1, v_1, w_1, t_1, y, z_1 \) are distinct vertices, and we have \( p(u_1) \neq p(w_1) \), \( p(v_1) \neq p(t_1) \), $p(z_1)\neq p(y)$ and \( p(a_1) \neq p(b_1) \). By Theorem \ref{a8}, a spanning 4-path  \(  P^1_{u_1w_1}+P^1_{v_1t_1}+P^1_{a_1b_1} + P^1_{yz_1} \) exists in \( Q_{n-1}^1 \). Hence, the required Hamiltonian path in \( Q_n \) is $P_{xy}:= P^0_{xz_0} + P^1_{u_1w_1} + P^1_{v_1t_1} + P^1_{a_1b_1} + P^1_{yz_1} + \{s_0r_0, u_0u_1, v_0v_1, a_0a_1, b_0b_1, w_0w_1, t_0t_1, z_0z_1 \} - \{s_0a_0, r_0b_0, u_0w_0, v_0t_0 \}$,  depicted in Figure \ref{Fig.3}(b).

Subcase 2.3. \( x, y \in V(Q_{n-1}^1) \).  

Since \( |M_0| = 3n - 15 < 3n -10 \) then by Theorem \ref{wz16}, a Hamiltonian cycle $C_0$ exists passing through $M_0$ in $Q_{n-1}^0$. If \( M_c = \emptyset \), then \( |M_1| \leq 2 \). Choose an $s_0r_0 \notin M_0$ and $s_1r_1 \notin M_1$ on $C_0$ such that \( M_1 \cup \{s_1r_1\} \) form a linear forest. We have \( p(x) \neq p(y) \) and $xy \neq M_1$,  by Theorem \ref{n1}, there exists a Hamiltonian path \( P^1_{xy}\) containing \( M_1 \cup \{s_1r_1\}\) in \( Q^1_{n-1} \). The required Hamiltonian path is $P_{xy}:= C_0 + P^1_{xy} + \{s_0s_1, r_0r_1\} - \{s_0r_0, s_1r_1\}$ in \( Q_n \) passing through $M$.
 
If \( M_c = \{u_0u_1\} \), then \( |M_1| \leq 1 \). Choose a neighbor \( v_0 \) of \( u_0 \) on \( C_0 \) with \( v_1 \notin V(M_1) \). This ensures that \( \{u_1, v_1\} \cap V(M_1) = \emptyset \). If \( M_c = \{u_0u_1, v_0v_1\} \) and \( d_{C_0}(u_0, v_0) = 1 \), it follows that \( M_1 = \emptyset \). Since \( p(u_1) \neq p(x) \) and \( p(v_1) \neq p(y) \), by applying Lemma \ref{a5} when \( M_1 = 1 \), or Lemma \ref{dt} when \( M_1 = \emptyset \), a spanning 2-path \( P^1_{u_1x}+ P^1_{v_1y} \) of \( Q_{n-1}^1 \) that passes through \( M_1 \). The desired Hamiltonian path is  $P_{xy}:= C_0 + P^1_{u_1x}+ P^1_{v_1y} + \{u_0u_1, v_0v_1\} - u_0v_0$ in \( Q_n \) containing $M$.
 
If \( M_c = \{ u_0u_1, v_0v_1 \} \) and \( d_{C_0}(u_0, v_0) > 1 \), then \( M_1 = \emptyset \). Choose neighbors of $w_0$, $t_0$ of $u_0, v_0$, respectively on $C_0$ such $\{u_0w_0, v_0t_0\}\notin M_0$.  Since \( u_1, v_1, w_1, t_1, x, y \) are distinct vertices, and we have \( p(u_1) \neq p(x) \), \( p(w_1) \neq p(y) \), and \( p(v_1) \neq p(t_1) \). By Theorem \ref{a8}, there exists a spanning 3-path  \(  P^1_{u_1x}+P^1_{w_1y}+P^1_{v_1t_1} \) of \( Q_{n-1}^1 \). Hence, the required Hamiltonian path is $P_{xy}:= C_0 + P^1_{u_1x}+P^1_{w_1y} + P^1_{v_1t_1} + \{u_0u_1, v_0v_1, w_0w_1, t_0t_1\} - \{u_0w_0, v_0t_0 \}$ in \( Q_n \) containing $M$.

Case 3: $|M_0| = 3n - 14$. Now, $M_1 \leq 1$.

Subcase 3.1. If $x, y \in V(Q_{n-1}^0)$.

Since $|M_0| = 3n - 14$, there exist two edges $e, f \in M_0$ then $|M_0 \setminus \{e, f\}| = 3n - 16 = 3(n - 1) - 13$, applying induction hypothesis, a Hamiltonian path $P^0_{xy}$ exists in $Q^0_{n-1}$ passing through $(M_0 \setminus \{e, f\})$.

When \( |M_c| \geq 1  \), let $s_0r_0, w_0t_0$ be chosen edges such that $d(u_0, s_0r_0)\neq 1$ and  $d(u_0, w_0t_0)\neq 1$. Since \(\{s_0r_0, w_0t_0\} \cap E(P_{xy}^0) = \emptyset\), there are two distinct possibilities up to isomorphism. Let \(a_0, b_0, c_0, d_0\) represent the neighbors of \(s_0, r_0, w_0, t_0\) on \(P_{xy}^0\), respectively. Note that \( a_0, b_0, c_0, d_0 \) are not arbitrary neighbors but are specifically chosen based on their positions on the path. It is also possible that \(a_0 = w_0\) or \(d_0 = r_0\), see in Figure \ref{Fig.2}(a), or even that \(a_0 = w_0\), \(b_0 = t_0\), or \(c_0 = r_0\),  see in Figure \ref{Fig.2}(b). While the following constructions remain valid in all cases.
\begin{figure}[H]
	\centering
	\includegraphics[width=0.92\linewidth, height=0.3\textheight]{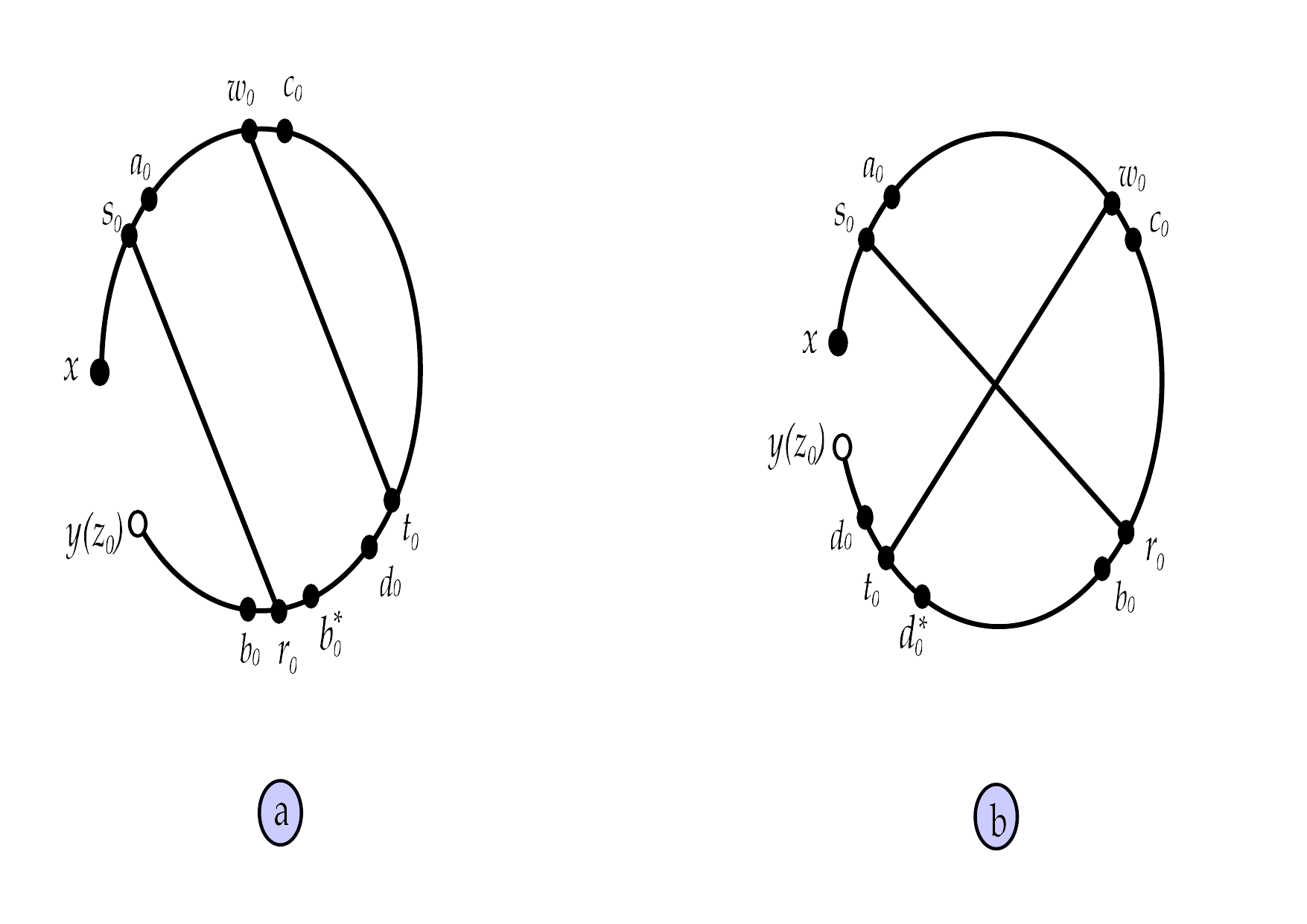}
	\caption[A]{\small Two possibilities of edges $\{s_0r_0, w_0t_0\}$ and its neighbors.}\label{Fig.2}
\end{figure}

If \(M_c = \{u_0u_1\}\), then \(M_1 = \emptyset\). Note that \(a_0\), \(b_0\), \(c_0\), and \(d_0\) are distinct vertices. Since \(u_0\) is not adjacent to any of \(\{s_0, r_0, w_0, t_0\}\) on \(P_{xy}^0\), it follows that \(u_0 \notin \{a_0, b_0, c_0, d_0\}\). Let \(v_0\) be a neighbor of \(u_0\) on \(P_{xy}^0\), such that \(v_0 \notin \{a_0, b_0, c_0, d_0\}\). If \(\{s_0r_0, w_0t_0\}\) is as on Figure \ref{Fig.2}(a), define \(k_1, k_2, k_3, k_4\) as vertices of \(a_1, b_1, c_1, d_1\), respectively. If \(\{s_0r_0, w_0t_0\}\) is as on Figure \ref{Fig.2}(b), let \(k_1, k_2, k_3, k_4\) be vertices of \(a_1, c_1, b_1, d_1\), respectively. Since \(p(a_1) \neq p(b_1)\) and \(p(c_1) \neq p(d_1)\), we have $\{\{k_1, k_2\}, \{k_3, k_4\}, \{u_1, v_1\}\}$ is a balanced set of vertices.  Given that \(n - 1 > 9\), by Theorem \ref{a8}, a spanning 3-path \( P^1_{k_1k_2} + P^1_{k_3k_4} + P^1_{u_1v_1} \) exists in \(Q^1_{n-1}\). Thus, the required Hamiltonian path is $P_{xy}:= P_{xy}^0 +  P^1_{k_1k_2} + P^1_{k_3k_4} + P^1_{u_1v_1} + \{s_0r_0, w_0t_0, a_0a_1, b_0b_1, c_0c_1, d_0d_1, u_0u_1, v_0v_1\} - \{s_0a_0, r_0b_0, w_0c_0, t_0d_0, u_0v_0\}$ in \(Q_n\) containing $M$.

It remains to address the case where \(M_c = \emptyset\), consider two subcases. We have \(|M_1| \leq 1\).

Subcase 3.1.1. Assume \(\{s_0r_0, w_0t_0\}\) is as shown in Figure \ref{Fig.2}(a).

If either \(\{a_1, b_1\} \cap V(M_1) = \emptyset\) or \(\{c_1, d_1\} \cap V(M_1) = \emptyset\), then by applying Lemma \ref{a5} when \(|M_1| = 1\) or Lemma \ref{a8} when \(M_1 = \emptyset \), it is possible to obtain a spanning 2-path \( P^1_{a_1b_1}+ P^1_{c_1d_1} \) of \(Q^1_{n-1}\) containing \(M_1\). Hence, the desired Hamiltonian path is  $P_{xy}:= P_{xy}^0 + P^1_{a_1b_1} + P^1_{c_1d_1} + \{s_0r_0, w_0t_0, a_0a_1, b_0b_1, c_0c_1, d_0d_1\} - \{s_0a_0, r_0b_0, w_0c_0, t_0d_0\}$ in \(Q_n\) containing $M$. Otherwise, if \(|\{a_1, b_1\} \cap V(M_1)| = |\{c_1, d_1\} \cap V(M_1)| = 1\), note that in this case, \(|M_1| = 1\). Let \(M_1 = \{e_1\}\).

If \(a_1d_1 = e_1\) or \(b_1c_1 = e_1\), then using the facts \(p(a_1) \neq p(b_1)\) and \(p(c_1) \neq p(d_1)\), we deduce that \(p(a_1) \neq p(d_1)\) and \(p(b_1) \neq p(c_1)\). Given that either \(\{b_1, d_1\} \cap V(e_1) = \emptyset\) or \(\{a_1, c_1\} \cap V(e_1) = \emptyset\), by Lemma \ref{a5}, a spanning 2-path \( P^1_{a_1d_1} + P^1_{b_1c_1} \) exists in \(Q^1_{n-1}\) passes through $e_1$. Thus, the Hamiltonian path is $P_{xy}:= P_{xy}^0 + P^1_{a_1d_1} + P^1_{b_1c_1} +  \{s_0r_0, w_0t_0, a_0a_1, b_0b_1, c_0c_1, d_0d_1\} - \{s_0a_0, r_0b_0, w_0c_0, t_0d_0\}$ of \(Q_n\) containing $M$,  illustrated in Figure \ref{Fig.5}(a). If \(a_1c_1 = e_1\), let \(b_0^*\) denote the neighbor of \(r_0\) on \(P_{xy}^0\) that is distinct from \(b_0\). Since \(Q_n\) is bipartite and \(p(s_0) \neq p(r_0)\), it follows that \(p(b_0^*) \neq p(d_0)\). Moreover, because \(p(c_1) = p(b^*_1) \neq p(a_1) = p(d_1)\), by Theorem \ref{dt}, there exists a spanning 2-path \(P^1_{b^*_1d_1} + a_1c_1 \) in \(Q^1_{n-1} \). The Hamiltonian path is $P_{xy}:= P_{xy}^0 + P^1_{b^*_1d_1} + \{s_0r_0, w_0t_0, a_1c_1, a_0a_1, b^*_0b^*_1, c_0c_1, d_0d_1\} - \{s_0a_0, r_0b^*_0, w_0c_0, t_0d_0\}$ in \(Q_n\) containing $M$. Eventually, the case \(b_1d_1 = e_1\) is structurally equivalent to the case \(a_1c_1 = e_1\).
\begin{figure}[H]
	\centering
	\includegraphics[width=0.95\linewidth, height=0.25\textheight]{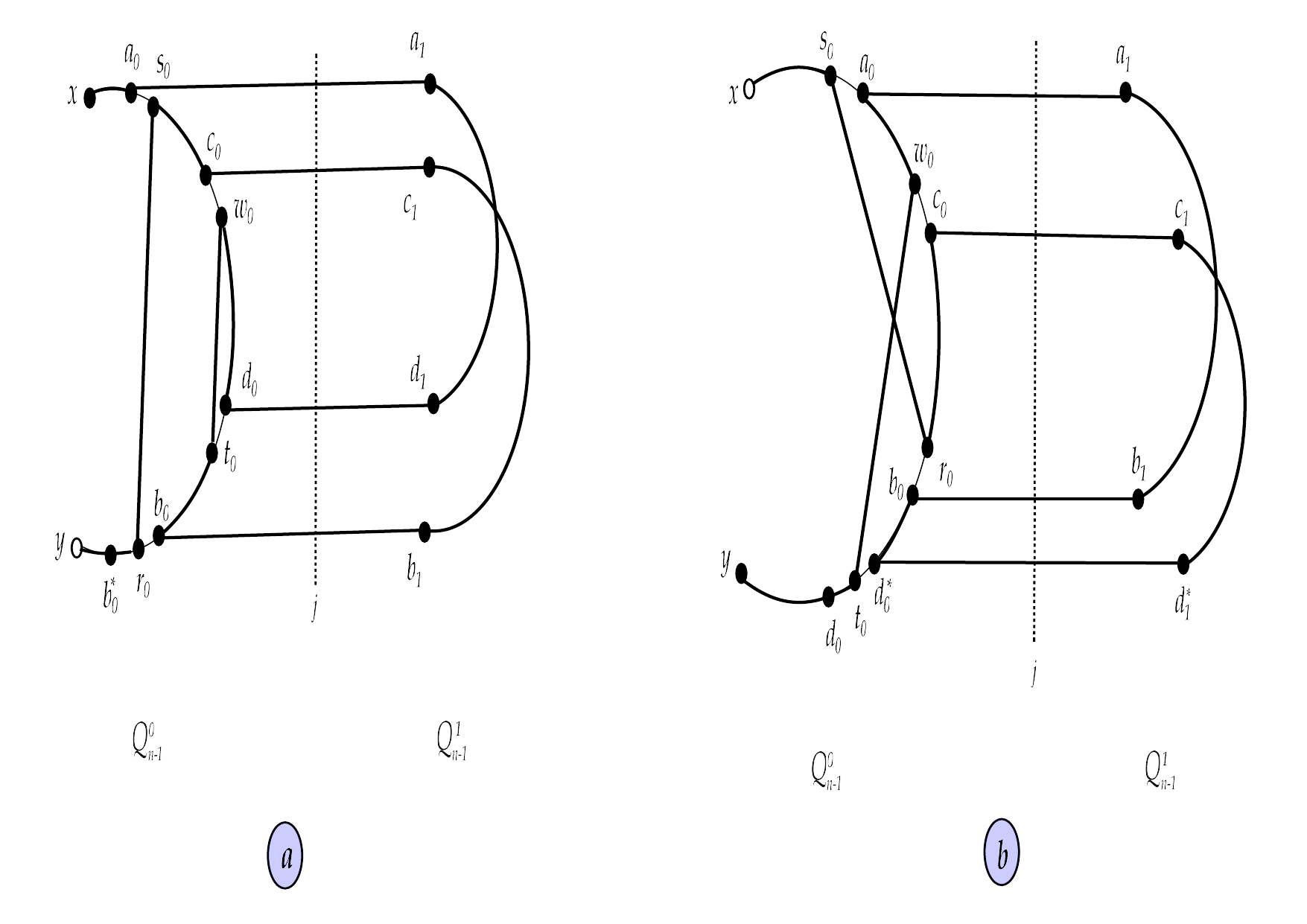}
	\caption[A]{\small Depiction of subcase 3.1.} \label{Fig.5}
\end{figure}

Subcase 3.1.2. Assume that \(\{s_0r_0, w_0t_0\}\) is as shown in Figure \ref{Fig.2}(b). 

Since \(p(w_0) \neq p(t_0)\), it follows that \(p(s_0) = p(w_0)\) or \(p(s_0) = p(t_0)\). W.l.o.g., let \(p(s_0) = p(t_0)\). Thus, we have \(p(s_0) = p(t_0) \neq p(w_0) = p(r_0)\), and \(p(a_1) = p(d_1) \neq p(c_1) = p(b_1)\). If \(\{a_1, c_1\} \cap V(M_1) = \emptyset\) or \(\{b_1, d_1\} \cap V(M_1) = \emptyset\), then, using Lemma \ref{a5} when \(|M_1| = 1\) or Theorem \ref{a8} when \( M_1 = \emptyset \), there exists a spanning 2-path \( P^1_{a_1c_1} + P^1_{b_1d_1} \) of \(Q^1_{n-1}\) that passes through \(M_1\). The desired Hamiltonian path is $P_{xy}:= P_{xy}^0 + P^1_{a_1c_1} + P^1_{b_1d_1} + \{s_0r_0, w_0t_0, a_0a_1, b_0b_1, c_0c_1, d_0d_1\} - \{s_0a_0, r_0b_0, w_0c_0, t_0d_0\}$ in \(Q_n\).

Otherwise, \(|\{a_1, c_1\} \cap V(M_1)| = |\{b_1, d_1\} \cap V(M_1)| = 1\), implying \(|M_1| = 1\).  
Let \(M_1 = \{e_1\}\). Since \(p(a_1) = p(d_1) \neq p(c_1) = p(b_1)\), it follows that \(a_1b_1 = e_1\) or \(c_1d_1 = e_1\). If \(d_{P_{xy}^0}(t_0, r_0) = 1\), then because \(p(a_1) \neq p(c_1)\) and \(a_1c_1 \neq e_1\), by Lemma \ref{a2}, a Hamiltonian path \(P^1_{a_1c_1}\) exists in \(Q^1_{n-1}\) containing $e_1$. The Hamiltonian path is \( P_{xy}:= P_{xy}^0  + P^1_{a_1c_1} + \{s_0r_0, w_0t_0, a_0a_1, c_0c_1\} - \{s_0a_0, w_0c_0, r_0t_0\}\) in \(Q_n\). If \(d_{P_{xy}^0}(t_0, r_0) \neq 1\), let \(d_0^*\) be the neighbor of \(t_0\) on \(P_{xy}^0\) that is distinct from \(d_0\). Since \(p(t_0) \neq p(r_0)\) by assumption, we have \(p(d_0^*) \neq p(c_0)\), which implies \(d_0^* \neq b_0\). Note that \(a_0, b_0, c_0, d_0,\) and \(d_0^*\) are distinct. When \(a_1b_1 = e_1\), it follows that \(\{c_1, d^*_1\} \cap V(e_1) = \emptyset\). When \(c_1d_1 = e_1\), we have \(\{a_1, b_1\} \cap V(e_1) = \emptyset\). Since \(p(d^*_1) = p(t_0) \neq p(w_0) = p(c_1)\), Lemma \ref{a5}, a spanning 2-path \( P^1_{a_1b_1} + P^1_{c_1d^*_1} \) exists in \(Q^1_{n-1}\). The desired Hamiltonian path is \( P_{xy}:= P_{xy}^0 + P^1_{a_1b_1} + P^1_{c_1d^*_1} + \{s_0r_0, w_0t_0, a_0a_1, b_0b_1, c_0c_1, d_0^*d^*_1\} - \{s_0a_0, r_0b_0, w_0c_0, t_0d^*\}\) in \(Q_n\) passing through $M$, illustrated in Figure \ref{Fig.5}(b).

Subcase 3.2. $x \in V(Q_{n-1}^0)$ and $y \in V(Q_{n-1}^1)$.

Choose $z_0 \in V(Q_{n-1}^0)$ with $p(x) \neq p(z_0)$ and $xz_0 \notin M_0$. Moreover, $p(y) \neq p(z_1)$, and the vertices $z_1, y$ are distinct such that $yz_1 \notin M_1$. Since $|M_0 \setminus \{e, f\}| = 3(n - 1) - 13$, by induction a Hamiltonian path $P^0_{xz_0}$  exists in $Q_{n-1}^0$ passing through $(M_0 \setminus \{e, f\})$. Since $|M_1| \leq 1$, if $\{u_0v_0,  w_0z_0\} \in E(P^0_{xz_0})$, then according to Lemma \ref{a2}, a Hamiltonian path $P^1_{z_1y}$ exists in $Q_{n-1}^1$. Thus,  $P_{xy} := P^0_{xz_0} + z_0z_1 + P^1_{z_1y}$ is required Hamiltonian path in $Q_n$ passing through $M$.

When \( |M_c| \geq 1  \), let $s_0r_0, w_0t_0$ be chosen edges such that $d(u_0, s_0r_0)\neq 1$ and  $d(u_0, w_0t_0)\neq 1$. Since \(\{s_0r_0, w_0t_0\} \cap E(P_{xz_0}^0) = \emptyset\), there are two distinct possibilities up to isomorphism, shown in Figure \ref{Fig.2}. Let \(a_0, b_0, c_0, d_0\) represent the neighbors of \(s_0, r_0, w_0, t_0\) on \(P_{xz_0}^0\), respectively. Note that \( a_0, b_0, c_0, d_0 \) are not arbitrary neighbors but are specifically chosen based on their positions on the path. It is also possible that \(a_0 = w_0\) or \(d_0 = r_0\), see in Figure \ref{Fig.2}(a), or even that \(a_0 = w_0\), \(b_0 = t_0\), or \(c_0 = r_0\), see in Figure \ref{Fig.2}(b). While the following constructions remain valid in all cases.

If \(M_c = \{u_0u_1\}\), then \(M_1 = \emptyset\). Note that \(a_0\), \(b_0\), \(c_0\), and \(d_0\) are distinct vertices. Since \(u_0\notin \{s_0, r_0, w_0, t_0\}\) on \(P_{xz_0}^0\), it follows that \(u_0 \notin \{a_0, b_0, c_0, d_0\}\). Let \(v_0\) be a neighbor of \(u_0\) on \(P_{xz_0}^0\), such that \( v_0 \notin \{a_0, b_0, c_0, d_0\}\). If \(\{s_0r_0, w_0t_0\}\) is as on Figure \ref{Fig.2}(a), define \(k_1, k_2, k_3, k_4\) as vertices of \(a_1, b_1, c_1, d_1\), respectively. If \(\{s_0r_0, w_0t_0\}\) is as on Figure \ref{Fig.2}(b), let \(k_1, k_2, k_3, k_4\) be vertices of \(a_1, c_1, b_1, d_1\), respectively. Since \(p(a_1) \neq p(b_1)\) and \(p(c_1) \neq p(d_1)\), we have a balanced set of vertices $\{\{k_1, k_2\}, \{k_3, k_4\}, \{u_1, v_1\}, \{y, z_1\}\}$.  Given that \(n - 1 > 9\), by Theorem \ref{a8}, a spanning 4-path \( P^1_{k_1k_2} + P^1_{k_3k_4} + P^1_{u_1v_1} + P^1_{yz_1} \) exists in \(Q^1_{n-1}\). Therefore, the required Hamiltonian path is $P_{xy}:= P_{xz_0}^0 +  P^1_{k_1k_2} + P^1_{k_3k_4} + P^1_{u_1v_1} + P^1_{yz_1} + \{s_0r_0, w_0t_0, a_0a_1, b_0b_1, c_0c_1, d_0d_1, u_0u_1, v_0v_1, z_0z_1\} - \{s_0a_0, r_0b_0, w_0c_0, t_0d_0, u_0v_0\}$ in \(Q_n\) containing $M$. 

It remains to address the case where \(M_c = \emptyset\), consider two subcases. We have \(|M_1| \leq 1\).

Subcase 3.2.1. Assume \(\{s_0r_0, w_0t_0\}\) is as shown in Figure \ref{Fig.2}(a).

If either \(\{a_1, b_1, c_1, d_1\} \cap V(M_1) = \emptyset\), then by applying Lemma \ref{a7} when \( |M_1| = 1\) or Lemma \ref{a8} when \( M_1 = \emptyset \), it is possible to obtain a spanning 3-path \( P^1_{a_1b_1}+ P^1_{c_1d_1} + P^1_{yz_1} \) of \(Q^1_{n-1}\) containing \(M_1\). Hence, the desired Hamiltonian path $P_{xy}:= P_{xz_0}^0 + P^1_{a_1b_1} + P^1_{c_1d_1} + P^1_{yz_1} + \{s_0r_0, w_0t_0, a_0a_1, b_0b_1, c_0c_1, d_0d_1, z_0z_1\} - \{s_0a_0, r_0b_0, w_0c_0, t_0d_0\}$ in \(Q_n\). Otherwise, if \(|\{a_1, b_1, c_1, d_1\} \cap V(M_1)| = 1\), note that in this case, \(|M_1| = 1\). Let \(M_1 = \{e_1\}\).

If \(a_1d_1 = e_1\) or \(b_1c_1 = e_1\), then using the facts \(p(a_1) \neq p(b_1)\) and \(p(c_1) \neq p(d_1)\), we deduce that \(p(a_1) \neq p(d_1)\) and \(p(b_1) \neq p(c_1)\). Given that either \(\{b_1, d_1, z_1, y\} \cap V(e_1) = \emptyset\) or \(\{a_1, c_1, z_1, y\} \cap V(e_1) = \emptyset\),  by Lemma \ref{a7}, a spanning 3-path \( P^1_{a_1d_1} + P^1_{b_1c_1} +P^1_{yz_1} \) exists in \(Q^1_{n-1}\) passes through $e_1$. Thus, the Hamiltonian path is $P_{xy}:= P_{xz_0}^0 + P^1_{a_1d_1}+P^1_{b_1c_1} +P^1_{yz_1} +  \{s_0r_0, w_0t_0, a_0a_1, b_0b_1, c_0c_1, d_0d_1, z_0z_1\} - \{s_0a_0, r_0b_0, w_0c_0, t_0d_0\}$ of \(Q_n\), see in Figure \ref{Fig.6}(a). If \(a_1c_1 = e_1\), let \(b_0^*\) denote the neighbor of \(r_0\) on \(P_{xz_0}^0\) that is distinct from \(b_0\). Since \(Q_n\) is bipartite and \(p(s_0) \neq p(r_0)\), it follows that \(p(b_0^*) \neq p(c_0)\). Furthermore, because \(p(c_1) = p(b^*_1) \neq p(a_1) = p(d_1)\), by Lemma \ref{a7}, there exists a spanning 3-path \(P^1_{b^*_1d_1}+ P^1_{yz_1} + a_1c_1 \) in \(Q^1_{n-1} \). Hence, the Hamiltonian path is $P_{xz_0}^0 + P^1_{b^*_1d_1} +P^1_{yz_1} + \{s_0r_0, w_0t_0, a_1c_1, a_0a_1, b^*_0b^*_1, c_0c_1, d_0d_1, z_0z_1\} - \{s_0a_0, r_0b^*_0, w_0c_0, t_0d_0\}$ in \(Q_n\) containing $M$. Eventually, the case \(b_1d_1 = e_1\) isomorphic to the case \(a_1c_1 = e_1\).
\begin{figure}[H]
	\centering
	\includegraphics[width=0.92\linewidth, height=0.25\textheight]{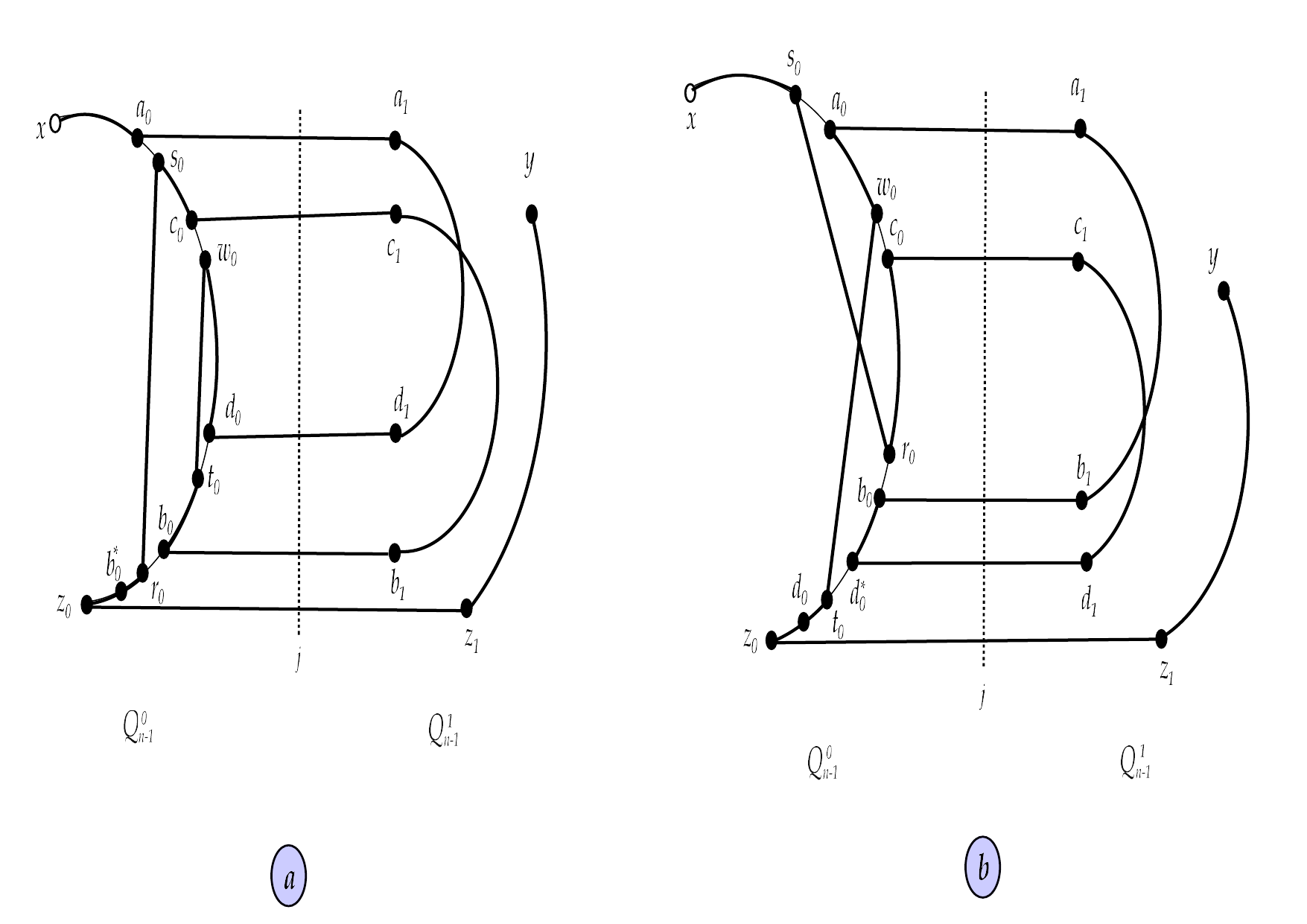}
	\caption[A]{\small Illustration of subcase 3.2}\label{Fig.6}
\end{figure}

Subcase 3.2.2. Assume that \(\{s_0r_0, w_0t_0\}\) is as shown in Figure \ref{Fig.2}(b). 

Since \(p(w_0) \neq p(t_0)\), it follows that \(p(s_0) = p(w_0)\) or \(p(s_0) = p(t_0)\). W.l.o.g., let \(p(s_0) = p(t_0)\). Thus, we have \(p(s_0) = p(t_0) \neq p(w_0) = p(r_0)\), and \(p(a_1) = p(d_1) \neq p(c_1) = p(b_1)\). If \(\{a_1, c_1, b_1, d_1\} \cap V(M_1) = \emptyset\), then, using Lemma \ref{a7} when \(|M_1| = 1\) or Theorem \ref{a8} when \(M_1 = \emptyset \), there exists a spanning 3-path \( P^1_{a_1c_1} + P^1_{b_1d_1}+P^1_{yz_1} \) of \(Q^1_{n-1}\) passes through \(M_1\). The desired Hamiltonian path is $P_{xy}:= P_{xz_0}^0 + P^1_{a_1c_1} + P^1_{b_1d_1}+P^1_{yz_1} + \{s_0r_0, w_0t_0, a_0a_1, b_0b_1, c_0c_1, d_0d_1, z_0z_1\} - \{s_0a_0, r_0b_0, w_0c_0, t_0d_0\}$ in \(Q_n\).

Otherwise, \(|\{a_1, c_1, b_1, d_1\} \cap V(M_1)| = 1\), implying \(|M_1| = 1\).  Let \(M_1 = \{e_1\}\). Since \(p(a_1) = p(d_1) \neq p(c_1) = p(b_1)\), it follows that \(a_1b_1 = e_1\) or \(c_1d_1 = e_1\). If \(d_{P_{xz_0}^0}(t_0, r_0) = 1\), then because \(p(a_1) \neq p(c_1)\) and \(a_1c_1 \neq e_1\), using Lemma \ref{a5}, a spanning 2-path \(P^1_{a_1c_1} + P^1_{yz_1}\) exists in \(Q^1_{n-1}\) passes through $e_1$. The Hamiltonian path is \( P_{xy}:= P_{xz_0}^0  + P^1_{a_1c_1} + P^1_{yz_1}+ \{s_0r_0, w_0t_0, a_0a_1, c_0c_1, z_0z_1\} - \{s_0a_0, w_0c_0, r_0t_0\}\) in \(Q_n\). If \(d_{P_{xz_0}^0}(t_0, r_0) \neq 1\), let \(d_0^*\) be the neighbor of \(t_0\) on \(P_{xz_0}^0\) that is distinct from \(d_0\). Since \(p(t_0) \neq p(r_0)\) by assumption, we have \(p(d_0^*) \neq p(b_0)\), which implies \(d_0^* \neq b_0\). Note that \(a_0, b_0, c_0, d_0,\) and \(d_0^*\) are distinct. When \(a_1b_1 = e_1\), it follows that \(\{c_1, d^*_1, z_1, y\} \cap V(e_1) = \emptyset\). When \(c_1d_1 = e_1\), we have \(\{a_1, b_1, z_1, y\} \cap V(e_1) = \emptyset\). Since \(p(d^*_1) = p(t_0) \neq p(w_0) = p(c_1)\), by Lemma \ref{a7}, a spanning 3-path \( P^1_{a_1b_1} + P^1_{c_1d^*_1} +P^1_{yz_1} \) exists in \(Q^1_{n-1}\). The desired Hamiltonian path is \( P_{xy}:= P_{xz_0}^0 + P^1_{a_1b_1} + P^1_{c_1d^*_1} +P^1_{yz_1} + \{s_0r_0, w_0t_0, a_0a_1, b_0b_1, c_0c_1, d_0^*d^*_1, z_0z_1\} - \{s_0a_0, r_0b_0, w_0c_0, t_0d^*_0\}\) in \(Q_n\) containing $M$, as depicted in Figure \ref{Fig.6}(b).

Subcase 3.3. \( x, y \in V(Q_{n-1}^0) \).  

The proof is similar to Subcase 2.3.

Case 4: $|M_0| = 3n - 13$.

Subcase 4.1. $x, y\in V(Q_{n-1}^0)$.

Choose three edges $e, f, g\in M_0$, since $|M_0 \setminus \{e, f, g\}| = 3(n - 1) - 13$, and using induction a Hamiltonian path $P^0_{xy}$ exists in $Q_{n-1}^0$ containing $(M_0 \setminus \{e, f, g\})$. 

Let $s_0r_0, w_0t_0, u_0v_0$ be the chosen edges. If the set of edges $\{s_0r_0, w_0t_0, u_0v_0\}\cap E(C_0) \neq \emptyset$, then the result follows directly from Case 1, Case 2, or Case 3. If the set of edges $\{s_0r_0, w_0t_0, u_0v_0\} \cap E(P^0_{xy}) = \emptyset$, there are four distinct cases up to isomorphism, as shown in Figure \ref{Fig.7}. Let consider $a_0, b_0, c_0, d_0, m_0, n_0$ denote the neighbors of the vertices $s_0, r_0, w_0, t_0, u_0, v_0$ on the path $P^0_{xy}$, respectively. Since $M$ is a matching and $\{s_0r_0, w_0t_0, u_0v_0\} \subseteq M_0$, the vertices $a_0, b_0, c_0, d_0, m_0, n_0$ are distinct vertices, and the edges $\{s_0a_0, r_0b_0, w_0c_0, t_0d_0, u_0m_0, v_0n_0\}\cap M= \emptyset$. If $\{s_0r_0, w_0t_0, u_0v_0\}$, is as on Figure \ref{Fig.7}(a) or (c), let define $k_1, k_2, k_3, k_4, k_5, k_6$ correspond to the vertices $a_1, b_1, c_1, d_1, m_1, n_1$, respectively. For Figure \ref{Fig.7}(b), define  $k_1, k_2, k_3, k_4, k_5, k_6$ represent the vertices $a_1, d_1, b_1, m_1, c_1, n_1$. Similarly, in Figure \ref{Fig.7}(d), define $k_1, k_2, k_3, k_4, k_5, k_6$ to vertices $a_1, c_1, b_1, n_1, d_1, m_1$, respectively.
\begin{figure}[H]
	\centering
	\includegraphics[width=0.8\linewidth, height=0.43\textheight]{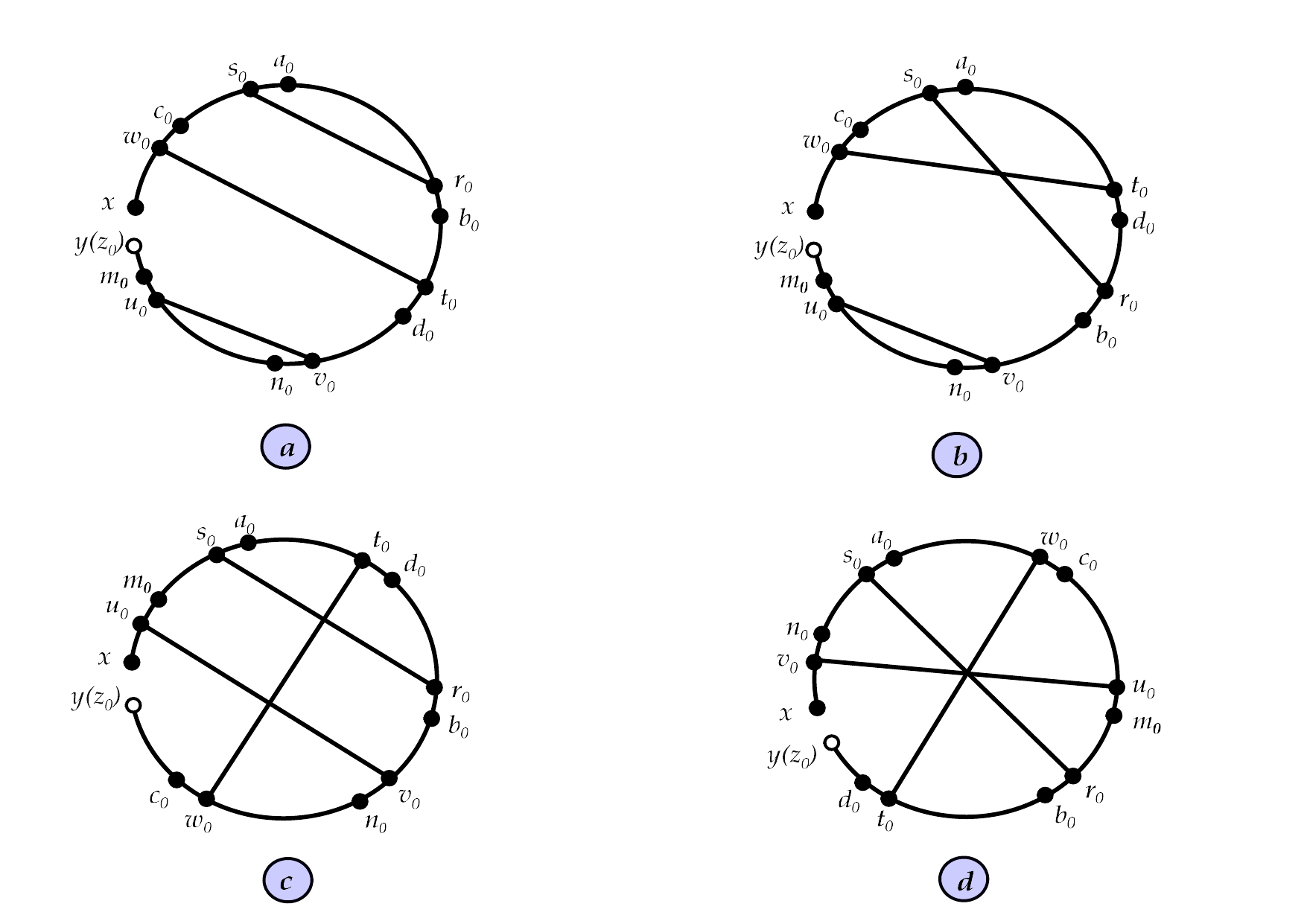}
	\caption[A]{\small Four possibilities of edges $\{s_0r_0, w_0t_0, u_0v_0\}$ and its neighbors.}\label{Fig.7}
\end{figure}

Since, $p(a_1) \neq p(b_1)$, $p(c_1) \neq p(d_1)$, and $p(m_1) \neq p(n_1)$, the set $\{\{k_1, k_2\}, \{k_3, k_4\}, \{k_5, k_6\}\}$ forms a balanced pair collection. Since $n - 1 > 9$, by Theorem \ref{a8}, a spanning 3-path $P^1_{k_1k_2} + P^1_{k_3k_4} + P^1_{k_5k_6}$ exists in $Q_{n-1}^1$. Therefore, the desired Hamiltonian path is $P_{xy}:= P^0_{xy} + P^1_{k_1k_2} + P^1_{k_3k_4} + P^1_{k_5k_6} + \{s_0r_0, w_0t_0, u_0v_0, a_0a_1, b_0b_1, c_0c_1, d_0d_1, m_0m_1, n_0n_1\} - \{s_0a_0, r_0b_0, w_0c_0, t_0d_0, u_0m_0, v_0n_0\}$ in $Q_n$ containing $M$.

Subcase 4.2. $x \in V(Q_{n-1}^0)$ and $y \in V(Q_{n-1}^1)$.

Since $|M_0| = 3n - 13$ three edges $e, f, g\in M_0$. Choose a vertex $z_0$ in $Q_{n-1}^0$ satisfying $p(z_0) \neq p(x)$ and  $xz_0\notin M_0$. Moreover, $p(z_1) \neq p(y)$. Since $|M_0 \setminus \{e, f, g\}| = 3(n - 1) - 13$, using induction, a Hamiltonian path $P^0_{xz_0}$ exists in $Q_{n-1}^0$ passing through $(M_0 \setminus \{e, f, g\})$. 

Let $s_0r_0, w_0t_0, u_0v_0$ be the chosen edges. If the set $\{s_0r_0, w_0t_0, u_0v_0\}\cap E(P^0_{xz_0}) \neq \emptyset$, then the result follows directly from Case 1, Case 2, or Case 3. If $\{s_0r_0, w_0t_0, u_0v_0\} \cap E(P^0_{xz_0}) = \emptyset$, there are four distinct cases up to isomorphism, as depicted in Figure \ref{Fig.7}. Let $a_0, b_0, c_0, d_0, m_0, n_0$ denote the neighbors of the vertices $s_0, r_0, w_0, t_0, u_0, v_0$ on the path $P^0_{xz_0}$, respectively. Since $M$ is a matching and $\{s_0r_0, w_0t_0, u_0v_0\} \subseteq M_0$, the vertices $a_0, b_0, c_0, d_0, m_0, n_0$ are all distinct, and $\{s_0a_0, r_0b_0, w_0c_0, t_0d_0, u_0m_0, v_0n_0\}\cap M= \emptyset$. If $\{s_0r_0, w_0t_0, u_0v_0\}$, is as on Figure \ref{Fig.7}(a) or (c), let $k_1, k_2, k_3, k_4, k_5, k_6$ correspond to the vertices $a_1, b_1, c_1, d_1, m_1, n_1$, respectively. In Figure \ref{Fig.7}(b), let us define $k_1, k_2, k_3, k_4, k_5, k_6$ denote the vertices $a_1, d_1, b_1, m_1, c_1, n_1$. Similarly, in Figure \ref{Fig.7}(d), denote $k_1, k_2, k_3, k_4, k_5, k_6$ the vertices $a_1, c_1, b_1, n_1, d_1, m_1$.

Since, $p(a_1) \neq p(b_1)$, $p(c_1) \neq p(d_1)$, $p(z_1)\neq p(y)$ and $p(m_1)\neq p(n_1)$, the set of vertices in $Q_{n-1}^1$ is $\{\{k_1, k_2\}, \{k_3, k_4\}, \{k_5, k_6\},\{z_1, y\}\}$ forms a balanced pair set. Since $n - 1 > 9$, by Theorem \ref{a8}, a spanning 4-path $P^1_{k_1k_2} + P^1_{k_3k_4} + P^1_{k_5k_6} + P^1_{z_1y}$ exists in $Q_{n-1}^1$. Therefore, the desired Hamiltonian path is
$P^0_{xz_0} + P^1_{k_1k_2} + P^1_{k_3k_4} + P^1_{k_5k_6} + +P^1_{yz_1} + \{s_0r_0, w_0t_0, u_0v_0, a_0a_1, b_0b_1, c_0c_1, d_0d_1, m_0m_1, n_0n_1, z_0z_1\} - \{s_0a_0, r_0b_0, w_0c_0, t_0d_0, u_0m_0, v_0n_0\}$ passing through the desired matching $M$ in the given hypercube $Q_n$.

Subcase 4.3. $x, y\in V(Q_{n-1}^1)$.

The proof is similar to Subcase 2.3.

This is the desired proof of the problem.
\subsection*{Data availability}
{\small No new data were generated or analyzed in support of this research.}
\subsection*{Conflict of interest}
{\small The authors have no conflict of interest.}
\subsection*{Acknowledgments} 
{\small We will express our appreciation to the anonymous referees for their productive feedback to improve the clarity of paper.}

\end{document}